\numberwithin{equation}{section}
\newtheorem{theorem}{Theorem}[section]
\newtheorem{lemma}[theorem]{Lemma}
\newtheorem{proposition}[theorem]{Proposition}
\newtheorem{corollary}[theorem]{Corollary}
\newtheorem{notation}[theorem]{Notation}
\newtheorem{observation}[theorem]{Observation}
\theoremstyle{definition}
\newtheorem{discussion}[theorem]{Discussion}
\newtheorem{definition}[theorem]{Definition}
\theoremstyle{remark}
\newtheorem{remark}[theorem]{Remark}\newtheorem{fact}[theorem]{Fact}
\newtheorem{question}[theorem]{Question}
\newtheorem{hypothesis}[theorem]{Hypothesis}
\newcommand{\Ass}{\operatorname{Ass}}
\newcommand{\grade}{\operatorname{grade}}
\newcommand{\pgrade}{\operatorname{p.grade}}
\newcommand{\Kgrade}{\operatorname{K.grade}}
\newcommand{\Min}{\operatorname{Min}}
\newcommand{\Spec}{\operatorname{Spec}}
\newcommand{\h}{\operatorname{h}}
\newcommand{\Gpd}{\operatorname{Gpd}}
\newcommand{\End}{\operatorname{End}}
\newcommand{\Ht}{\operatorname{ht}}
\newcommand{\id}{\operatorname{id}}
\newcommand{\fd}{\operatorname{fd}}
\newcommand{\pd}{\operatorname{pd}}
\newcommand{\Gdim}{\operatorname{Gdim}}
\newcommand{\gdim}{\operatorname{gd}}
\newcommand{\E}{\operatorname{E}}
\newcommand{\Ext}{\operatorname{Ext}}
\newcommand{\Supp}{\operatorname{Supp}}
\newcommand{\Tor}{\operatorname{Tor}}
\newcommand{\Hom}{\operatorname{Hom}}
\newcommand{\Zd}{\operatorname{Zd}}
\newcommand{\Ann}{\operatorname{Ann}}
\newcommand{\wdim}{\operatorname{w.dim}}
\newcommand{\depth}{\operatorname{depth}}
\newcommand{\pdepth}{\operatorname{p.depth}}
\newcommand{\Ker}{\operatorname{Ker}}
\newcommand{\Coker}{\operatorname{Coker}}
\newcommand{\im}{\operatorname{im}}
\newcommand{\vpl}{\operatornamewithlimits{\varprojlim}}
\newcommand{\lo}{\longrightarrow}
\newcommand{\fm}{\frak{m}}
\newcommand{\fp}{\frak{p}}
\newcommand{\fa}{\frak{a}}
\newcommand{\fb}{\frak{b}}
\begin{document}

\author[M. Asgharzadeh  and E. Mahdavi]{Mohsen Asgharzadeh and Elham Mahdavi}

\title[Homology with the theme of Matlis]{Homology with the theme of Matlis}

\address{M. Asgharzadeh }
\email{mohsenasgharzadeh@gmail.com}

\address{E. Mahdavi}
\email{elham.mahdavi.gh@gmail.com}

\subjclass[2020]{13C14; 13D02; 13D09}

\keywords{Cohen-Macualay rings; weakly cotorsion; completion; fraction field; projective dimension; Ext-modules; $R$-topology}

\begin{abstract}  Matlis proved a lot of homological properties of the fraction field of an integral domain. In this paper, we simplify and extend some of
	them from 1-dimensional (resp. rank one) cases to the higher dimensional (resp. finite rank) cases. For example, we study the weakly cotorsion property of $\Ext(-,\sim)$, and use it to present splitting criteria.
	These are equipped with several applications.
	For instance, we compute the projective dimension of $\widehat{R}$ and present some non-noetherian versions of Grothendieck's localization problem. We construct a new class of co-Hopfian modules and extend Matlis' decomposability problem to higher ranks. In particular, this paper deals with the basic properties of Matlis' quadric  $(Q,Q/R, \widehat{R},\overset{\sim}R).$ 
\end{abstract}

\maketitle
\tableofcontents
\section{Introduction}

Matlis has a lot of contributions to commutative algebra. His results on decomposition of injective modules and also his duality-theory are very famous, and perhaps, almost
every person in the community is family with his works. Also, his contributions to understanding $\pd_R(Q)$ is very important. Despite these,
some of his results are less well-known. For instance, some of his results reproved  without any contribution to him.

In \cite[10.4]{mat2}, and over 1-dimensional domains, Matlis proved that $\Ext^{>1}_R(-,\sim)$ is weakly co-torsion. A natural question arises. In fact,
Fuchs and Salse  \cite[page 462, problem 48]{FS} asked:
\begin{question}
  Is $\Ext_R^{1}(M,N)$ weakly cotorsion?\end{question}

In Section 2, and in a series of cases, we answer this. First, we observe that Question 1.1 is true if both modules are finitely generated. Then we try to remove these finiteness conditions. As a sample we show:

\begin{observation}  Let $(R,\frak{m})$ be an integral domain and $M$ and $N$ be two  $R$-modules such that  $\id_R (N) \leq 1 $.    Then $\Ext_R^{1}(M,N)$ is weakly cotorsion.
\end{observation}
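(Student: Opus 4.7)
The plan is to combine the injective coresolution of $N$ with a strong $Q$-cotorsion property of $\Hom_R(M,I)$ for injective $I$, and then chase long exact sequences of $\Ext^\ast_R(Q,-)$.

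The key input is the following identity: for every injective $R$-module $I$, one has $\Ext^j_R(Q,\Hom_R(M,I))=0$ for all $j\geq 1$. This is a derived adjunction
\[
\RHom_R(Q,\Hom_R(M,I))\;\simeq\;\RHom_R(Q\otimes^L_R M,I)\;\simeq\;\Hom_R(Q\otimes_R M,I),
\]
concentrated in cohomological degree zero because $Q$ is $R$-flat (so $Q\otimes^L_R M=Q\otimes_R M$) and $I$ is injective.

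Using $\id_R(N)\leq 1$, I fix an injective coresolution $0\to N\to E^0\to E^1\to 0$ and apply $\Hom_R(M,-)$ to obtain the four-term exact sequence
\[
0\to\Hom_R(M,N)\to\Hom_R(M,E^0)\to\Hom_R(M,E^1)\to\Ext^1_R(M,N)\to 0,
\]
which I split through $L:=\im(\Hom_R(M,E^0)\to\Hom_R(M,E^1))$. Chasing the long exact sequences of $\Ext^\ast_R(Q,-)$ applied to each of the two resulting short exact sequences and cancelling the middle terms via the vanishing above yields
\[
\Ext^1_R(Q,\Ext^1_R(M,N))\;\cong\;\Ext^2_R(Q,L)\;\cong\;\Ext^3_R(Q,\Hom_R(M,N)).
\]
This is the same information encoded in the two-column spectral sequence $E_2^{p,q}=\Ext^p_R(Q,\Ext^q_R(M,N))\Rightarrow\Ext^{p+q}_R(Q\otimes_R M,N)$, whose abutment dies above degree one since $Q\otimes_R M\cong\bigoplus Q$ as $R$-modules and $\id_R(N)\leq 1$; the only surviving differential $d_2\colon E_2^{p,1}\to E_2^{p+2,0}$ then identifies $\Ext^1_R(Q,\Ext^1_R(M,N))$ with $\Ext^3_R(Q,\Hom_R(M,N))$.

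The main obstacle is therefore the remaining vanishing $\Ext^3_R(Q,\Hom_R(M,N))=0$. In the classical 1-dimensional Matlis setting this is automatic from $\pd_R(Q)\leq 1$, which immediately forces $\Ext^{\geq 2}_R(Q,-)=0$ and closes the argument. For the local integral domain in the statement I expect to handle this final step by invoking earlier structural results in the paper on the behaviour of $\Ext_R(Q,-)$ on $\Hom$-modules built from the injective coresolution of $N$, or by reducing to the finitely generated case already recorded after Question 1.1 using a limit argument on $M$ and $N$.
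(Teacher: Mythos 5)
Your reduction is correct and, up to the last step, is essentially the paper's own argument: the paper also takes the coresolution $0\to N\to E^0\to E^1\to 0$, sets $K:=\ker\bigl(\Hom_R(M,E^1)\to\Ext^1_R(M,N)\bigr)$ (your $L$ is this same module), and uses the swap isomorphism $\Ext^i_R(Q,\Hom_R(M,I))\cong\Hom_R(\Tor_i^R(Q,M),I)=0$ for injective $I$ to reduce the claim to $\Ext^2_R(Q,K)=0$.

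The gap is the one you name yourself, and it is genuine. Your (correct) chase gives $\Ext^1_R(Q,\Ext^1_R(M,N))\cong\Ext^2_R(Q,K)\cong\Ext^3_R(Q,\Hom_R(M,N))$, and nothing in the hypotheses kills this last group: the swap isomorphism applies only to $\Hom_R(M,I)$ with $I$ injective, not to $\Hom_R(M,N)$ itself, and for a general integral domain there is no bound such as $\pd_R(Q)\le 2$ (that bound is what saves the $1$-dimensional and countable cases treated elsewhere in the paper). Neither of your proposed escapes closes it: $\Ext^1_R(Q,-)$ does not commute with the limits or colimits needed for a reduction to finitely generated $M$ and $N$, and the ``earlier structural results'' give vanishing only against injective targets. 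Note that the paper disposes of this step by asserting exactness of $\Psi:0\to\Hom_R(M,E^0)\to\Hom_R(M,E^1)\to K\to 0$, whose two left-hand terms are both of the injective-target form and hence acyclic for $\Ext_R^{\ge1}(Q,-)$; but since $K=\im\bigl(\Hom_R(M,E^0)\to\Hom_R(M,E^1)\bigr)$, the presentation that actually arises from the four-term sequence is $0\to\Hom_R(M,N)\to\Hom_R(M,E^0)\to K\to 0$ --- exactly the one you wrote --- so the troublesome term $\Ext^3_R(Q,\Hom_R(M,N))$ cannot be avoided this way, and your identification of it as the real obstruction stands.
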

For instance, and as an immediate application, over 1-dimensional regular rings, 
$\Ext_R^{1}(M,N)$ is weakly cotorsion. Then we reprove one of Maltis results about the weakly cotorsion property of $\Ext_R^{1}(M,N)$.
As an application to  the weakly co-torsion results,  we present the following splitting criteria in  Section 3:

\begin{corollary}
	Let $(R,\frak{m})$ be a 1-dimensional Gorenstein complete local integral domain and $M$ be a finitely generated $R$-module of finite injective dimension. Then $M \cong t(M) \oplus \overline{M}$.
\end{corollary}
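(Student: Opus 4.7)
The plan is to reformulate the claimed decomposition as the splitting of a canonical short exact sequence, and then to extract the required $\Ext$-vanishing from the weakly cotorsion machinery of Sections~2 and~3.

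First, I would use the Gorenstein hypothesis to pin down the homological shape of $M$. Over a Gorenstein local ring, a finitely generated module has finite injective dimension if and only if it has finite projective dimension, and then the Auslander--Buchsbaum formula together with $\depth R = 1$ forces $\pd_R M \le 1$. Since $R$ is a one-dimensional local domain, $t(M)$ is a finitely generated torsion module and hence of finite length, while $\overline{M}=M/t(M)$ is finitely generated, torsion-free, and therefore maximal Cohen--Macaulay. Because $R$ is Gorenstein of dimension one, $\overline{M}$ is moreover reflexive, and $\Ext^{>0}_R(\overline{M},R)=0$.

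The desired isomorphism $M\cong t(M)\oplus \overline{M}$ is equivalent to the vanishing $\Ext^1_R(\overline{M},t(M))=0$. My plan is to establish this by combining the splitting criterion of Section~3 with Observation~1.2: since $\id_R R=1$, Observation~1.2 places Ext groups of the form $\Ext^1_R(\overline{M},R)$ in the weakly cotorsion class, while the fact that $R$ is complete puts $t(M)$ itself in the cotorsion realm (every finite length module over a complete local ring is Matlis reflexive and cotorsion). Using the adjunction $\Ext^i_R(A,\Hom_R(B,E))\cong \Hom_R(\Tor^R_i(A,B),E)$ with $E=E_R(R/\mathfrak{m})$ and Matlis reflexivity of $t(M)$, the vanishing translates into $\Tor^R_1(\overline{M},t(M)^{\vee})=0$, a statement pairing a reflexive MCM module against a finite length module.

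The main obstacle is promoting the weakly cotorsion conclusion of Observation~1.2 to an honest vanishing. Here the completeness of $R$ and the Gorenstein duality $\omega_R\cong R$ (with $E_R(R/\mathfrak{m})\cong Q/R$ in the one-dimensional case) must combine with the finite length structure of $t(M)^{\vee}$. I anticipate the cleanest route is to resolve $t(M)^{\vee}$ by a short presentation (exploiting its $\mathfrak{m}$-primary, Artinian structure together with the cotorsion behaviour over complete local rings), tensor against $\overline{M}$, and use reflexivity of $\overline{M}$ together with the Gorenstein duality $\Ext^1_R(T,R)\cong T^{\vee}$ for finite length $T$ to cancel everything except the relevant $\Tor_1$ term. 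The splitting criterion of Section~3 then forces this $\Tor_1$ to vanish, and consequently the sequence $0\to t(M)\to M\to \overline{M}\to 0$ splits as required.
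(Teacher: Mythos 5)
Your strategy reduces the splitting to the vanishing of $\Ext^1_R(\overline{M},t(M))$, equivalently (via Matlis duality) of $\Tor_1^R(\overline{M},t(M)^{\vee})$, and you correctly flag as the ``main obstacle'' the passage from weakly cotorsion to actual vanishing. That obstacle cannot be overcome: the vanishing is false, and in fact the corollary itself fails as stated. Take $R=k[[t^2,t^3]]$ and $M=\Coker\bigl(R\xrightarrow{\binom{t^2}{t^3}}R^2\bigr)$. Then $\pd_RM=1$, so $\id_RM<\infty$ because $R$ is a one-dimensional complete Gorenstein local domain; a direct computation gives $t(M)\cong k$ and $\overline{M}\cong\fm$. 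Here $\Ext^1_R(\overline{M},t(M))\cong\Ext^1_R(\fm,k)\cong\Ext^2_R(k,k)\neq0$ since $R$ is not regular, and the canonical sequence $0\to t(M)\to M\to\overline{M}\to0$ does not split: if it did, $\fm$ would be a direct summand of a module of finite projective dimension, hence free (being maximal Cohen--Macaulay of finite projective dimension over a local ring), which it is not; alternatively, $M$ needs $2$ generators while $k\oplus\fm$ needs $3$.

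The root of the problem is already visible in your first paragraph: from $\id_RM<\infty$ you get $\pd_RM\le1$, but this gives no control over $t(M)$ or $\overline{M}$ separately, and your subsequent steps implicitly require $\pd_R\overline{M}<\infty$ (equivalently $\overline{M}$ free) or $\id_R t(M)<\infty$; neither follows from the hypotheses, as the example shows ($\id_Rk=\infty$ there). For comparison, the paper's own argument is different in route --- it identifies $N\cong\Ext^1_R(Q/R,N)$ for finitely generated $N$ over the complete ring and then invokes Proposition \ref{12} --- but Proposition \ref{12} carries the hypothesis $\id_R(t(M))<\infty$, which the corollary silently assumes and which fails in the example, so the paper's proof has essentially the same gap. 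If one adds the hypothesis $\id_R(t(M))<\infty$, the statement becomes true but nearly trivial: then $\pd_Rt(M)\le1$, hence $\pd_R\overline{M}<\infty$, hence $\overline{M}$ is free by Auslander--Buchsbaum and the sequence splits with no cotorsion input at all.
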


 By $Q:=Q(R)$ we mean the fraction field of an integral domain $R$.
 In Section 4 we
study some homological properties of $Q$. Our results extend Matlis results to higher rank and higher dimensional cases. As a sample we show

\begin{corollary}
	Let $(R, \fm )$ be a complete local integral domain. The following assertions are equivalent:\begin{enumerate}
		\item[i)]Any torsion-free module of finite rank   is $Q^t\oplus M$ where $M$ is finitely generated.
		\item[ii)]  $\dim R\leq 1$.
		\item[iii)]Any torsion-free module of finite rank  is Matlis reflexive.	\item[iv)] Suppose $S$ is torsion-free and of finite rank. Then $\fm S=S$ iff $S$
		is injective.
	\end{enumerate}
\end{corollary}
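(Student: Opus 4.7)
The overall strategy is to verify the implications in a star pattern centred on (ii), i.e.\ show (ii) $\Rightarrow$ (i), (iii), (iv), and then each of (i), (iii), (iv) $\Rightarrow$ (ii). The three forward implications rest on Matlis' classical structure theory for complete one-dimensional local domains; for the converses the essential test module will be $Q$ itself, complemented by one further torsion-free finite-rank module of explicit structure (such as $\widetilde{R}$) for the most delicate implication.

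For (ii) $\Rightarrow$ (i): if $\dim R=0$ then $R=Q$ and every module is already a $Q$-vector space, so there is nothing to prove. If $\dim R=1$, Matlis' classical structure theorem states that any torsion-free module of finite rank over a complete local domain splits as $Q^t\oplus M$ with $M$ finitely generated, $Q^t$ being the maximal divisible submodule. For (ii) $\Rightarrow$ (iii), Matlis reflexivity of finite rank torsion-free modules over complete one-dimensional local domains is likewise classical, and in the zero-dimensional case trivial. For (ii) $\Rightarrow$ (iv), suppose $S$ is torsion-free of finite rank with $\fm S=S$. Since $\dim R\leq 1$, for every nonzero $a\in\fm$ the ring $R/(a)$ is Artinian, so $\fm^{n}\subseteq(a)$ for some $n$; hence $S=\fm^{n}S\subseteq aS$, so $S$ is $a$-divisible. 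Being torsion-free and divisible over the domain $R$, $S$ is a finite-dimensional $Q$-vector space, hence isomorphic to $Q^{n}$; since $Q$ is injective over a one-dimensional domain, so is $S$. The reverse direction of (iv) is immediate: an injective torsion-free module is divisible and therefore satisfies $\fm S=S$.

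For the converses, apply the hypothesis to carefully chosen test modules. For (iv) $\Rightarrow$ (ii): take $S=Q$, which is torsion-free of rank one and satisfies $\fm Q=Q$; then (iv) forces $Q$ to be injective, and over a complete local domain this is equivalent to $\dim R\leq 1$ by Matlis. For (iii) $\Rightarrow$ (ii): test once more on $Q$; Matlis reflexivity of $Q$ over a complete local domain is known to characterize $\dim R\leq 1$, the failure in dimension $\geq 2$ arising from the mismatch between $\Hom_R(Q,E(R/\fm))$ and the natural double-dual candidate. For (i) $\Rightarrow$ (ii) the direct test on $Q$ is inconclusive, since $Q$ is indecomposable and tautologically $Q=Q^{1}\oplus 0$; instead I would test on $\widetilde{R}$ (the $R$-topology completion of $R$), a torsion-free finite-rank $R$-module whose structure is sensitive to $\dim R$. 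The plan is to show that a decomposition $\widetilde{R}=Q^{t}\oplus M$ with $M$ finitely generated would impose constraints on the Matlis quadric $(Q,Q/R,\widehat{R},\widetilde{R})$ forcing $\dim R\leq 1$.

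The main obstacle is the implication (i) $\Rightarrow$ (ii): one must pinpoint a torsion-free finite-rank module that refuses the prescribed decomposition whenever $\dim R\geq 2$. The natural candidate is $\widetilde{R}$, leveraging the properties of the Matlis quadric developed earlier in the paper; making this precise---translating the structural failure of the decomposition into a dimension inequality---is where the real work lies, whereas the remaining implications reduce to testing on $Q$ and invoking Matlis' injectivity/reflexivity criteria.
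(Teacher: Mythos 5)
Two of your converse implications have genuine problems. First, your argument for (iv) $\Rightarrow$ (ii) is vacuous: you test on $S=Q$ and claim that injectivity of $Q$ forces $\dim R\leq 1$, but $Q$ is injective over \emph{every} noetherian domain ($Q=E_R(R)$; the paper records exactly this in the remark preceding Proposition \ref{31}), so condition (iv) holds automatically for $S=Q$ and no information about $\dim R$ is extracted. The correct test module is $S=R_{\fp}$ for a prime $0\subsetneqq\fp\subsetneqq\fm$, which exists when $\dim R\geq 2$: choosing $f\in\fm\setminus\fp$ makes $f$ a unit in $R_\fp$, so $\fm R_{\fp}=R_{\fp}$, yet $R_{\fp}$ is not injective since the divisible torsion-free modules over a domain are precisely the $Q$-vector spaces and $\fp R_\fp$ is a nonzero proper ideal. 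Second, (i) $\Rightarrow$ (ii) is not proved at all --- you explicitly defer ``the real work'' --- and your proposed test module $\overset{\sim}R$ cannot work: for an $\fm$-adically complete local ring one has $R\subseteq\overset{\sim}R\subseteq\widehat{R}=R$, so $\overset{\sim}R=R$ is finitely generated and decomposes tautologically as $Q^{0}\oplus R$. The paper instead tests on $R_f=R[1/f]$ with $f\in\fm\setminus\fp$: this is torsion-free of rank one and not finitely generated (module-finiteness would make $1/f$ integral over $R$ and force $1\in\fm$), so (i) leaves only $R_f\cong Q$; but $\fp R_f$ is a nonzero prime of $R_f$, contradicting that $Q$ is a field.

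The remaining implications are essentially sound. Your (ii) $\Rightarrow$ (iv) argument (deduce $a$-divisibility from $\fm S=S$ via Artinianness of $R/(a)$, then identify $S$ with a finite-dimensional $Q$-vector space) is a pleasant direct route that differs from the paper, which instead derives (iv) from the decomposition in (i) together with Nakayama's lemma applied to the finitely generated summand. Your (iii) $\Rightarrow$ (ii) via non-reflexivity of $Q$ in dimension $\geq 2$ matches the paper's use of Proposition \ref{mtq}, though you should make explicit that $Q^{\vee}$ has infinite $Q$-rank when $d>1$, so $Q^{\vee\vee}$ cannot have rank one.
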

Recall that Matlis was successful to study $Q/R$, at least over 1-dimensional Cohen-Macaulay rings. This is a very funny object. An essential  number  of his results,  may extend to the higher dimensional cases. In  this direction,  and for a sample, see \cite[Section 12]{moh}. We may come back to this.
Let us now,  use some local cohomological arguments to present the higher dimensional  version of \cite[Theorem 1]{matcanada}:
\begin{proposition}\label{31}  Let $(R,\fm)$ be an integral domain. Then $\dim Q(R)/R =\dim R -1$. In particular,
	$Q/R$ is an artinian if and only if $\dim(R) =1$.
\end{proposition}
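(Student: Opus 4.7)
The plan is to exploit local cohomology applied to the short exact sequence
$$0 \longrightarrow R \longrightarrow Q \longrightarrow Q/R \longrightarrow 0,$$
so as to translate the dimension of $Q/R$ into information about $H^*_\fm(R)$. Throughout one may assume that $R$ is not a field (otherwise $Q/R=0$ and the claim is vacuous); write $d:=\dim R$.

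The first step is to verify that $H^i_\fm(Q)=0$ for every $i\geq 0$. Pick any non-zero $r\in\fm$; since $R$ is a domain, $r$ is a unit in $Q$, so multiplication by $r$ is an automorphism of $Q$, and hence of $H^i_\fm(Q)$. On the other hand, every element of a local cohomology module is $\fm$-torsion, so annihilated by a power of $r$. Combining these forces $H^i_\fm(Q)=0$.

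Feeding this into the long exact sequence of local cohomology, and using that $H^0_\fm(R)=\Gamma_\fm(R)=0$ (an integral domain that is not a field has no $\fm$-torsion), one obtains the clean shift
$$H^i_\fm(Q/R)\cong H^{i+1}_\fm(R)\qquad\text{for all }i\geq 0.$$
Grothendieck's vanishing and non-vanishing then give $H^{d-1}_\fm(Q/R)\neq 0$ while $H^i_\fm(Q/R)=0$ for $i\geq d$.

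To pass from cohomology to actual Krull dimension, I would compute the support directly. Localizing the above sequence yields $(Q/R)_\fp\cong Q/R_\fp$, which is zero exactly when $R_\fp=Q$, i.e.\ when $\fp=(0)$; consequently $\Supp(Q/R)=\Spec R\setminus\{(0)\}$. Taking the supremum of $\dim R/\fp$ over non-zero primes produces $d-1$ (a height-one prime in a saturated chain of maximal length realises this bound, and the bound itself follows by prepending $(0)$ to any chain under $\fp$), so $\dim(Q/R)=d-1$, compatible with the cohomological computation.

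For the artinian statement, when $d=1$ the support reduces to $\{\fm\}$, so $Q/R$ is entirely $\fm$-torsion and the shift collapses to $Q/R\cong H^0_\fm(Q/R)\cong H^1_\fm(R)$; this is the top local cohomology of the finitely generated module $R$, hence artinian by the classical Macdonald--Matlis-type theorem. Conversely, any artinian module has dimension zero, forcing $d=1$. The main obstacle I foresee is that $Q/R$ is not finitely generated, so the neat identity $\dim M=\max\{i:H^i_\fm(M)\neq 0\}$ is not available off the shelf; the workaround is to compute the support by hand on one side and read the cohomological dimension off the long exact sequence on the other, checking that both give $d-1$.
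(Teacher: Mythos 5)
Your proposal is correct and follows essentially the same route as the paper: both apply the local cohomology long exact sequence to $0\to R\to Q\to Q/R\to 0$, use $H_{\fm}^{\ast}(Q)=0$ together with Grothendieck non-vanishing to conclude $H_{\fm}^{d-1}(Q/R)\neq 0$, and identify $Q/R\cong H_{\fm}^{1}(R)$ in the case $d=1$ to get artinianness. The only difference is cosmetic: for the upper bound $\dim Q/R\leq d-1$ the paper invokes that $Q/R$ is torsion (so $(0)\notin\Supp(Q/R)$), whereas you compute $\Supp(Q/R)=\Spec R\setminus\{(0)\}$ directly; both are valid.
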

Recall that a module
$A$  is called
\emph{co-Hopfian} if its injective endomorphisms are automorphisms. Even in group theory, it is difficult to find them. Matlis proved over 1-dimensional
Cohen-Macaulay rings that artinian modules are co-Hopfian. We drop both assumptions, namely Cohen-Macaulayness and 
the 1-dimensional assumption. We do this by reproving a funny result of Vasconcelos \cite{vo}. For more details, see Section 5. Suppose $(R,\fm)$ is a 1-dimensional domain.
In \cite[Theorem 7.1]{E} Matlis proved that
any nonzero map $Q/R\to Q/R$ is surjective iff $R$ is analytically unramified. We   present  its  higher-dimensional  version:
\begin{proposition} 
	Let $(R,\fm)$ be a d-dimensional Cohen-Macaulay ring. Then any nonzero $f:H^d_\fm(R)\to H^d_\fm(R)$ is surjective iff $R$ is analytically unramified.
\end{proposition}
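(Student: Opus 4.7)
The plan is to identify $\End_R(H^d_\fm(R))$ with $\widehat R$, rewrite any endomorphism as multiplication by an element of $\widehat R$, and then translate surjectivity via the long exact sequence of local cohomology into a nonzerodivisor condition on that element.

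First, $H^d_\fm(R) \cong H^d_{\fm\widehat R}(\widehat R)$ is naturally an $\widehat R$-module, so $\End_R(H^d_\fm(R)) = \End_{\widehat R}(H^d_\fm(\widehat R))$. Since $\widehat R$ is a complete CM local ring, the Cohen structure theorem guarantees a canonical module $\omega_{\widehat R}$. Local duality yields $H^d_\fm(\widehat R) \cong \Hom_{\widehat R}(\omega_{\widehat R}, E_{\widehat R}(k))$, and combining Matlis duality with the standard identity $\End_{\widehat R}(\omega_{\widehat R}) \cong \widehat R$ produces a canonical ring isomorphism $\End_R(H^d_\fm(R)) \cong \widehat R$. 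In particular, every nonzero $f$ is multiplication by a unique nonzero $\hat r \in \widehat R$.

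Second, apply $H^*_\fm$ to the right-exact sequence $\widehat R \xrightarrow{\hat r} \widehat R \to \widehat R/\hat r\widehat R \to 0$ and invoke Grothendieck's vanishing $\cd_\fm(\widehat R) \le d$ to obtain
\begin{equation*}
H^d_\fm(\widehat R) \xrightarrow{\hat r\cdot} H^d_\fm(\widehat R) \to H^d_\fm(\widehat R/\hat r\widehat R) \to 0.
\end{equation*}
So $f$ is surjective iff $H^d_\fm(\widehat R/\hat r\widehat R) = 0$ iff $\dim \widehat R/\hat r\widehat R < d$. Because $\widehat R$ is CM, every minimal prime of $\widehat R$ has coheight $d$, so this happens iff $\hat r$ avoids every minimal prime, i.e., $\hat r$ is a nonzerodivisor on $\widehat R$. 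Consequently, ``every nonzero endomorphism of $H^d_\fm(R)$ is surjective'' becomes ``every nonzero $\hat r \in \widehat R$ is a nonzerodivisor,'' which is the condition that $R$ is analytically unramified.

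The main obstacle is the first step---establishing $\End_R(H^d_\fm(R)) \cong \widehat R$ via local duality, the existence of $\omega_{\widehat R}$, and the identity $\End(\omega_{\widehat R}) \cong \widehat R$ for CM local rings. Once this is in hand, translating surjectivity through the long exact sequence of local cohomology and invoking the structure of associated (equivalently, minimal) primes in CM rings is routine.
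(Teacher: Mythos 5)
Your proof is correct and reaches the conclusion by a genuinely different route from the paper. The paper first reduces to the complete case and then works entirely on the Matlis-dual side: it identifies $f^{\vee}$ with multiplication by some $r$ on $\omega_R$ via $\End_R(\omega_R)\cong R$, converts surjectivity of $f$ into injectivity of $\mu_r$ on $\omega_R$, and relates that to $r$ being a nonzerodivisor on $R$ through $\Ass(\omega_R)=\Min(R)=\Ass(R)$. You instead stay on the local cohomology side: after the identification $\End_R(H^d_\fm(R))\cong \widehat R$, you read off surjectivity of multiplication by $\hat r$ from the right-exactness of $H^d_\fm$ and Grothendieck (non)vanishing as the condition $\dim \widehat R/\hat r\widehat R<d$, and then invoke equidimensionality and $\Ass=\Min$ for Cohen--Macaulay rings. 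The two arguments are Matlis-dual to one another; yours treats arbitrary endomorphisms uniformly in both directions and makes the surjectivity criterion geometrically explicit, while the paper's is slightly shorter once $\Ass(\omega_R)=\Ass(R)$ is granted (though it silently uses that $\mu_r$ is nonzero on $H^d_\fm(R)$ for $r\neq 0$, i.e.\ the faithfulness of $\omega_R$, which your endomorphism-ring identification supplies for free). One caveat, which applies equally to your last sentence and to the paper's own wording: ``every nonzero element of $\widehat R$ is a nonzerodivisor'' says that $\widehat R$ is a domain (analytically irreducible), which is strictly stronger than $\widehat R$ being reduced (analytically unramified); for instance $k[[x,y]]/(xy)$ is reduced and Cohen--Macaulay, yet multiplication by $x$ on $H^1_\fm$ is nonzero and not surjective. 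The paper's proof likewise establishes the ``domain'' version, so your argument proves exactly what the paper actually proves, and the mismatch lies in the statement's terminology rather than in your reasoning.
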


Then, we use this generalization to present 
the following  higher dimensional version of \cite[7.1]{E}, where Matlis worked with 1-dimensional local domains:

\begin{corollary}
	Let $(R,\fm)$ be a d-dimensional complete  local Cohen-Macaulay domain.
	Then $H^d_\fm(R)$ has no nontrivial highly divisible submodule.
\end{corollary}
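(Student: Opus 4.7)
The plan is to combine the preceding Proposition with a Matlis-duality argument on the canonical module.

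First, since $R$ is a complete local domain, $R=\widehat R$ is a domain, hence reduced, hence analytically unramified. The previous Proposition therefore applies: every nonzero endomorphism of $H^d_\fm(R)$ is surjective, and in particular multiplication by any $0\neq r\in R$ is surjective on $H^d_\fm(R)$, so $H^d_\fm(R)$ is divisible.

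Now suppose $D\subseteq H^d_\fm(R)$ is a highly divisible submodule. Applying the Matlis functor $(-)^\vee=\Hom_R(-,E(R/\fm))$ to $0\to D\to H^d_\fm(R)\to H^d_\fm(R)/D\to 0$ and invoking local duality $H^d_\fm(R)^\vee\cong\omega_R$ gives
\[
0\longrightarrow N\longrightarrow \omega_R\longrightarrow D^\vee\longrightarrow 0,\qquad N:=(H^d_\fm(R)/D)^\vee .
\]
Divisibility of $D$ (inherent in being highly divisible) dualizes, via the surjectivity of multiplication established above, to torsion-freeness of $D^\vee$: surjectivity of multiplication by $r$ on $D$ implies injectivity of multiplication by $r$ on $D^\vee$. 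Thus $N$ is a saturated (pure) submodule of $\omega_R$. Since $R$ is a CM local domain, $\omega_R$ is a finitely generated torsion-free $R$-module of rank one, so its only saturated submodules are $0$ and $\omega_R$, corresponding to $D=H^d_\fm(R)$ and $D=0$. The strict ``highly'' portion of the hypothesis (not just ordinary divisibility) rules out $D=H^d_\fm(R)$, in the same spirit as \cite[7.1]{E} in the 1-dimensional case, leaving $D=0$.

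The main obstacle is the Matlis-dual interpretation of \emph{highly} (as opposed to merely) divisible, which is what forces $D\neq H^d_\fm(R)$. This is where the preceding Proposition is needed in full strength: via the identification $\End_R H^d_\fm(R)\cong R$ (obtained by dualizing $\End_R \omega_R\cong R$), the divisibility witnesses for $D$ come from global endomorphisms of $H^d_\fm(R)$ and dualize cleanly to the saturatedness of $N$; without the Proposition only ordinary divisibility would be captured on the dual side and the sharpening coming from the ``highly'' hypothesis would be lost.
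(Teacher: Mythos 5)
Your core argument is correct, but it follows a genuinely different route from the paper. The paper fixes a nontrivial highly divisible submodule $A$, invokes Matlis's evaluation-map lemma \cite[Lemma 6.6]{matne} (this is where the full ``highly divisible''/consequence hypothesis is actually consumed) to produce a nonzero map $H^d_\fm(R)\to A$, and composes it with the proper inclusion $A\subsetneqq H^d_\fm(R)$ to get a nonzero non-surjective endomorphism, contradicting Proposition \ref{coh}. You instead dualize: highly divisible implies divisible (take $i=0$ in the definition and note that in a domain every nonzero element of $\fm$ extends to a system of parameters), divisibility of $D$ makes $D^{\vee}$ torsion-free, and the rank-one torsion-free module $\omega_R$ has no proper nonzero submodule with torsion-free quotient, forcing $D=0$ or $D=H^d_\fm(R)$. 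This is complete, shorter, bypasses Proposition \ref{coh} and \cite{matne} entirely, and in fact proves the stronger assertion that $H^d_\fm(R)$ has no nonzero proper \emph{divisible} submodule.

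Two corrections to your closing paragraph. First, the preceding Proposition is not needed anywhere in your argument: the divisibility you actually use is that of $D$ itself, which comes from the ``highly divisible'' hypothesis, not from surjectivity of endomorphisms of the ambient module. Second, nothing ``rules out $D=H^d_\fm(R)$'', nor should it: since $\omega_R$ is maximal Cohen--Macaulay, every system of parameters is $\omega_R$-regular, and dualizing shows that $H^d_\fm(R)$ is itself highly divisible. The case $D=H^d_\fm(R)$ is simply one of the two \emph{trivial} submodules excluded by the word ``nontrivial'' in the statement (which, as the paper's own proof makes clear, means nonzero and proper), so your dichotomy already finishes the proof.
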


In \cite[Cor 8]{matcanada} Matlis proved over 1-dimensional domain that
$\Hom_R(Q/B,Q/R)\neq0$ where $B\subsetneqq Q$.
In Theorem \ref{fff} we extend this to higher.
We close Section 5 by presenting a sample that a  result of Matlis could not be extended to higher, see Theorem \ref{nex}.

Matlis invented a homological approach to the $R$-adic completion of $R$, denoted by $\overset{\sim} R$. Since the map from $R\to \overset{\sim} R$ is flat it is natural to ascent  and descent properties of $R$ and $\overset{\sim} R$. Since his rings where not assumed to be noetherian, the general Grothendieck's localization theory does not work directly.
In \cite{mat2} Matlis proved $\gdim(R) \leq \gdim(\overset{\sim} R)$, where $\gdim(-)$    is the global  dimension of $(-)$. This may not be sharp. He introduced the concept of closed rings and showed that in this case the equality holds (see \cite [Thm 4.6]{d}), in particular $R$ is regular if and only if $\overset{\sim} R$ is regular. 	It may be nice to mention that the class of  closed rings  appears naturally, and plays an essential role in many of Matlis papers.
Recently, the concept of Cohen-Macaulay rings extended to non-noetherian situation. For instance, see \cite{AT}. In Section 6 we present the Cohen-Macaulay  analogue of  \cite{mat2} and \cite{d}.
 Here is the Cohen-Macaulay version:
\begin{proposition}
	Let $(R,\fm)$ be a closed local domain. Then $R$ is Cohen-Macaulay (resp. Gorenstein) if and only if $\overset{\sim} R$ is Cohen Macaulay (resp. Gorenstein).
\end{proposition}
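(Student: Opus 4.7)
The plan is to build on two facts already in play in the introduction: flatness of the canonical map $R\to\widetilde R$, and the closed-ring equality $\gdim(R)=\gdim(\widetilde R)$ of Matlis \cite[Thm 4.6]{d}. The goal is to upgrade this from global dimension to the two invariants that control the (possibly non-noetherian) Cohen-Macaulay and Gorenstein conditions of \cite{AT}, namely the grade at $\fm$ and the self-injective dimension.

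For the Cohen-Macaulay equivalence, I would adopt the definition of \cite{AT}: $R$ is Cohen-Macaulay when $\grade(\fm,R)=\dim R$, with grade computed in the Koszul (or polynomial) sense at the maximal ideal. The argument then splits into two verifications, namely $\dim R=\dim\widetilde R$ and $\grade(\fm,R)=\grade(\fm\widetilde R,\widetilde R)$. The grade equality is essentially automatic from flatness, since the Koszul complex on a finite generating set of $\fm$ tensors up to the Koszul complex on $\fm\widetilde R$; as $R\to\widetilde R$ is local with $\fm\widetilde R$ maximal (by closedness) and the residue field unchanged, Koszul homology is preserved and the two implications drop out. The dimension equality uses the closed hypothesis to guarantee that going-up and going-down behave as in the noetherian case, so chains of primes lift and contract between $R$ and $\widetilde R$.

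For the Gorenstein case, I would track $\injdim_R R$. Flat base change gives $\Ext^i_{\widetilde R}(N\otimes_R\widetilde R,\widetilde R)\cong \Ext^i_R(N,R)\otimes_R\widetilde R$ for suitable $N$, which pushes finiteness of $\injdim_R R$ onto finiteness of $\injdim_{\widetilde R}\widetilde R$. The converse uses faithful flatness of $R\to\widetilde R$ in the closed case: vanishing of $\Ext^{>n}_R(-,R)\otimes_R\widetilde R$ forces vanishing of $\Ext^{>n}_R(-,R)$. Coupling this with the Cohen-Macaulay equivalence above, and the characterization ``Gorenstein $=$ Cohen-Macaulay plus finite self-injective dimension'' in the non-noetherian sense of \cite{AT}, yields the Gorenstein equivalence.

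The hard part will be the dimension equality $\dim R=\dim\widetilde R$ in the non-noetherian closed-ring setting; unlike the noetherian case, where fibre-dimension formulas settle this immediately, here one has to exploit specific properties of closed rings packaged by Matlis in \cite{d}. A secondary subtlety is matching invariants with \cite{AT}: one must fix which grade (Koszul versus polynomial) and which injective-dimension variant are used, and then verify flat base change holds verbatim for that particular choice. Once this bookkeeping is done, the whole argument is a transparent promotion of the known $\gdim$ equality to the depth and injective-dimension settings.
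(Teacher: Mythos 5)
Your Cohen--Macaulay argument is essentially the paper's own (Propositions \ref{39} and \ref{41}): both reduce the statement to the two equalities $\dim R=\dim\overset{\sim}R$ and $\pdepth(\fm,R)=\pdepth(\fm\overset{\sim}R,\overset{\sim}R)$. The dimension equality you flag as the hard part is settled exactly by the closed-ring fact you allude to, namely Matlis's theorem that every prime $P$ of $\overset{\sim}R$ satisfies $P=(P\cap R)\overset{\sim}R$ (\cite[Thm 4.5]{d}), so chains of primes contract injectively and $\dim R=\dim\overset{\sim}R$; your Koszul flat-base-change argument is a reasonable justification of the grade equality that the paper merely asserts, keeping in mind that $\fm$ need not be finitely generated, so one must carry the supremum over finitely generated subideals through the base change. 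Up to that bookkeeping, this half matches the paper.

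The Gorenstein half has a genuine gap. The base-change isomorphism $\Ext^i_{\overset{\sim}R}(N\otimes_R\overset{\sim}R,\overset{\sim}R)\cong\Ext^i_R(N,R)\otimes_R\overset{\sim}R$ is only available for $N$ admitting a resolution by finitely generated projectives, whereas over a non-noetherian $R$ the invariant $\id_R(R)$ is detected (via Baer's criterion) by the modules $R/I$ for arbitrary ideals $I$, which need not be finitely presented; and in the direction ``$R$ Gorenstein implies $\overset{\sim}R$ Gorenstein'' you must test against arbitrary $\overset{\sim}R$-modules, which are not of the form $N\otimes_R\overset{\sim}R$ at all. So ``suitable $N$'' cannot be stretched to cover the test modules that actually compute the two injective dimensions, and the argument does not close. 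The paper's Proposition \ref{411} avoids this by a mechanism special to closed rings: it uses $\id_A(M)=\sup\{i:\Ext^i_A(T,M)\neq0,\ T\ \emph{torsion}\}$ together with Matlis's equivalence between torsion $R$-modules and torsion $\overset{\sim}R$-modules and the comparisons $\Ext^i_R(T,\overset{\sim}R)\cong\Ext^i_{\overset{\sim}R}(T,\overset{\sim}R)\cong\Ext^i_R(T,R)$ from \cite[5.5.1--5.5.3]{mat2}, which give $\id_R(R)=\id_{\overset{\sim}R}(\overset{\sim}R)$ on the nose. Note also that the paper's definition of Gorenstein for quasi-local rings is simply $\id_R(R)<\infty$, so the ``Cohen--Macaulay plus finite self-injective dimension'' packaging you invoke from \cite{AT} is not needed and would require separate justification.
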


By   $\wdim (-)$ we mean the   weak  dimension of $(-)$.
Matlis \cite[9.2]{mat2}  proved that
$$\wdim (R) \leq \wdim( \overset{\sim} R).$$
We close Section 6 by sharpening this.

Computing the projective dimension of   flat modules is a difficult task. The history of this comes  back to Kaplansky's school. As a sample, see Bass' paper \cite{bass}. In fact, Matlis was interested in computing $\pd_R(Q)$. Also, see \cite{moh}. Section 7 deals with the following problem:

\begin{question}
What is $\pd_R(\widehat{R})$?
\end{question}
 Matlis \cite[Cor 5.3.1]{mat2} proved that $\pd_R( \overset{\sim} R )\leq \pd_R(Q)$. Suppose $R$ is not complete in $R$-topology. We show
$$\pd_R(Q) = \pd_R( \overset{\sim} R )\leq \pd_R(\widehat{R}).$$
We close Section 7 by a calculation of  $\otimes_n\overset{\sim}R$, which may be useful in the construction of Amitsur resolution.

 In Section 8, we present some explicit computation of $\Ext_R^\ast(\widehat{R},R)$. This may extend  \cite{wag}.

Let $(R,\fm)$ be a complete local domain of dimension one, and let $S$ be torsion-free and of rank one. It was a conjecture of Matlis that $Q/R\otimes_R S$ is indecomposable. 
In \cite{d} he was successful to show that it is indeed indecomposable. In the final section, we reconsider this and connect it to the current research  topic of incomparability
of top local cohomology modules.

In particular, this paper deals with the Matlis' funny quadric $$(Q,Q/R, \widehat{R},\overset{\sim}R).$$
There are some related results that not discussed.
We invite the interested reader to see \cite{moh} for more related results in the style of Matlis, specially the homologies of $Q$.

\section{Weakly co-torsion}
We start by recalling:

\begin{definition}\label{1} (Matlis)
Let $(R,\frak{m})$ be an integral domain with field of fraction $Q$, and $M$ be a nonzero $R$-module.
\begin{itemize}
\item[(i)]  $M$ is $h$-reduced if $\Ext_R^0 (Q,M)=0$.
\item[(ii)]   $M$ is called weakly cotorsion if $\Ext_R^{1}(Q,M)=0$.
 $M$ is called weakly cotorsion if $\Ext_R^{1}(Q,M)=0$.
\item[(iii)]   $M$ is called cotorsion if $\Ext_R^{0}(Q,M)=\Ext_R^{1}(Q,M)=0$.
\item[(iv)]   $M$ is called strongly cotorsion if $\Ext_R^{i}(Q,M)=0$ for all $i>0$.
\end{itemize}
\end{definition}

By $\pd_R(-)$ (resp. $\id_R(-)$)  we mean projective dimension (resp. injective dimension).
 
\begin{remark}\label{14}
	Let $(R,\frak{m})$ be an integral domain. Matlis proved that if $\dim(R)=1$, then $\pd_R (Q)=1$. Also, he proved the following results:
	Let $(R,\frak{m})$ be local ring. The following are equivalent:
	\begin{itemize}
		\item[(i)] $\Ext_R^1 (Q/R, B)$ is weakly cotorsion and $h$-reduced.
		\item[(ii)] $\pd_R (Q)=1$.
		\item[(iii)] $\pd_R (Q/R)=1$.
	\end{itemize}

\end{remark}
\begin{notation}
	Let $E_R(-)$ be the injective envelope of $(-)$.
\end{notation}
 \begin{notation}Let $(R,\fm,k)$ be local. The notation $(-)^{ \vee}$ stands for the Matlis functor defined by $\Hom_R(-,E_R(k))$.\end{notation}

 \begin{observation}\label{6}  Let $(R,\frak{m})$ be a complete local integral domain and $M$ and $N$ are finitely generated $R$-modules. Then $\Ext_R^{1}(M,N)$ is weakly cotorsion.
 \end{observation}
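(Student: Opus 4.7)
The plan is to combine Matlis duality with the flatness of $Q$ in order to reduce the desired vanishing $\Ext^1_R(Q,\Ext^1_R(M,N))=0$ to a vanishing of $\Tor$ against $Q$, which will be automatic. The point is that over a complete local ring one has a very tight handle on finitely generated modules through their Matlis duals, and $\Tor$ against the flat module $Q$ is zero for formal reasons.

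Set $E:=\Ext^1_R(M,N)$. Since $R$ is (noetherian) complete local and $M, N$ are finitely generated, a finite free resolution $P_\bullet\to M$ presents $E$ as a subquotient of the complex $\Hom_R(P_\bullet,N)$ of finitely generated modules, so $E$ is itself finitely generated. Matlis duality for the complete local ring $R$ then yields $E\cong E^{\vee\vee}=\Hom_R(E^\vee,E_R(k))$. Plugging this in and invoking the $\Hom$--$\otimes$ adjunction, together with the injectivity of $E_R(k)$ (which guarantees that $\Hom_R(-,E_R(k))$ carries quasi-isomorphisms to quasi-isomorphisms), gives for every $i\geq 0$ a natural identification
$$\Ext^i_R(Q,E)\;\cong\;\Ext^i_R\!\bigl(Q,\Hom_R(E^\vee,E_R(k))\bigr)\;\cong\;\Hom_R\!\bigl(\Tor_i^R(Q,E^\vee),E_R(k)\bigr).$$

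Now $Q$ is a localization of $R$ and hence flat, so $\Tor_i^R(Q,-)=0$ for every $i\geq 1$. Taking $i=1$ above forces the right-hand side to vanish, so $\Ext^1_R(Q,E)=0$, which is precisely the weakly cotorsion property for $E=\Ext^1_R(M,N)$. I do not anticipate any real obstacle in carrying this out: the whole argument is a short duality chase, and the only hypothesis that is genuinely used is the finite generation of $E$ (so that Matlis reflexivity applies), which is exactly what the noetherianity of the complete local $R$ together with the finite generation of $M$ and $N$ delivers.
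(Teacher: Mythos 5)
Your proposal is correct and follows essentially the same route as the paper: both set $E:=\Ext^1_R(M,N)$, observe $E$ is finitely generated hence Matlis reflexive, and use the duality isomorphism $\Ext^1_R(Q,E^{\vee\vee})\cong\Tor_1^R(Q,E^\vee)^\vee$ together with the flatness of $Q$ to conclude. No substantive difference.
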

 \begin{proof}
 	Let $E:= \Ext_R^{1} (M,N) $ and recall that $E$ is finitely generated $R$-module. By Matlis duality $E=E^ {\vee \vee}$, then
 	
 	$$\Ext_R^{1}(Q,E)= \Ext_R^{1} (Q, (E^\vee )^\vee) \cong \Tor_1^{R} (Q, E^ \vee )^\vee.$$
 	As $Q$ is flat $R$-module, $\Tor_1^{R} (Q,-)=0$ and so $E$ is weakly cotorsion.
 \end{proof}

\begin{observation}\label{3}  Let $(R,\frak{m})$ be a 1-dimensional  local integral domain and $\id (N) < \infty $ and $N$ be of finite Bass number. Let $M$ be an  $R$-module. Then $\Ext_R^{1}(M,N)$ is weakly cotorsion.
\end{observation}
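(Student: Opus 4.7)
The plan is to invoke a Grothendieck-type spectral sequence coming from derived tensor--hom adjunction, combined with Matlis's theorem (Remark~\ref{14}) that $\dim R=1$ forces $\pd_R(Q)\leq 1$. I would fix a projective resolution $0\to P_1\to P_0\to Q\to 0$ of length at most one and an injective resolution $N\to I^\bullet$, and form the first-quadrant double complex
\[
C^{p,q}\;:=\;\Hom_R(P_p,\,\Hom_R(M,I^q))\;\cong\;\Hom_R(P_p\otimes_R M,\,I^q),
\]
which is supported in the strip $0\le p\le 1$. In particular, each total degree of $C^{\bullet,\bullet}$ is a finite direct sum, so both Cartan--Eilenberg spectral sequences converge strongly to $H^\ast(\operatorname{Tot}C^{\bullet,\bullet})$.

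Filtering by $p$ and using exactness of $\Hom_R(P_p,-)$ gives $E_2^{p,q}=\Ext_R^p(Q,\,\Ext_R^q(M,N))$. Filtering by $q$ instead, the injectivity of each $I^q$ together with flatness of $Q$ (so $\Tor_{>0}^R(Q,M)=0$) collapses the sequence onto $E_2^{0,q}=\Ext_R^q(Q\otimes_R M,\,N)$, identifying the common abutment. Since $\pd_R(Q)\leq 1$, only the rows $p=0,1$ of the first spectral sequence survive, so it degenerates at $E_2$ and yields
\[
0\to \Ext_R^1(Q,\,\Ext_R^{n-1}(M,N))\to \Ext_R^n(Q\otimes_R M,\,N)\to \Hom_R(Q,\,\Ext_R^n(M,N))\to 0
\]
for each $n$.

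Taking $n=2$, it remains to show $\Ext_R^2(Q\otimes_R M,\,N)=0$. But $Q\otimes_R M$ is a $Q$-vector space, hence a direct sum of copies of $Q$; since $\Ext$ turns direct sums in the first variable into products and $\pd_R(Q)\leq 1$ kills $\Ext_R^2(Q,-)$, this vanishes, completing the proof.

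The main obstacle is the convergence of the spectral sequences in the possibly unbounded $q$-direction, which is resolved automatically by the horizontal boundedness coming from $\pd_R(Q)\leq 1$, since in each total degree the filtration has only finitely many steps. I note that this argument does not actually use $\id_R(N)<\infty$ or the finiteness of the Bass numbers, though these hypotheses would let one rewrite the proof inside a bounded injective resolution of $N$ by finite direct sums of $Q$ and $E_R(k)$, which is likely the more elementary route intended by the authors.
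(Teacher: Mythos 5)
Your argument is correct, but it takes a genuinely different route from the paper's. The paper works entirely inside the short injective resolution $0\to N\to E(N)\to \Omega^{-1}N\to 0$ (using $\id(N)<\infty$ together with $\dim R=1$ to get $\id(N)\le 1$) and uses the finite Bass numbers to write $\Omega^{-1}N$ as a \emph{finite} direct sum $Q^{m_1}\oplus E(R/\fm)^{m_2}$; it then squeezes $\Ext_R^1(Q,\Ext_R^1(M,N))$ between $\Ext_R^1(Q,\Hom_R(M,\Omega^{-1}N))$ and $\Ext_R^2(Q,K)$, killing the latter by $\pd_R(Q)=1$ and the former via $\Hom_R(M,Q)$ being a $Q$-vector space and $\Ext_R^1(Q,M^\vee)\cong\Tor_1^R(Q,M)^\vee=0$. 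You instead shift along $Q$ rather than along $N$: balancing the two spectral sequences of $\Hom_R(P_p\otimes_R M,I^q)$ gives the embedding $\Ext_R^1(Q,\Ext_R^1(M,N))\hookrightarrow\Ext_R^2(Q\otimes_R M,N)$, and the target vanishes because $Q\otimes_R M$ is a direct sum of copies of $Q$ while $\pd_R(Q)\le 1$. Both proofs rest on the same two pillars, namely Matlis's $\pd_R(Q)=1$ for a $1$-dimensional noetherian local domain (this is where the noetherian hypothesis enters in both arguments) and the adjunction $\Ext_R^i(Q,\Hom_R(M,I))\cong\Hom_R(\Tor_i^R(Q,M),I)$ for $I$ injective; but your version buys strictly more. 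Your observation that neither $\id(N)<\infty$ nor the Bass-number hypothesis is used is correct, and the same exact sequence with $n=q+1$ shows $\Ext_R^q(M,N)$ is weakly cotorsion for every $q\ge 1$ and arbitrary $M,N$ over such a ring, which subsumes Observation \ref{6} and Corollary \ref{8} as well. Your closing guess about the intended elementary route --- a bounded injective resolution by finite sums of $Q$ and $E(R/\fm)$ --- is exactly what the authors do.
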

\begin{proof}
 We set
$$\eta : 0\longrightarrow N \overset{\varphi}{\longrightarrow} E(N) \longrightarrow \Omega^{-1} N \longrightarrow 0.$$
Without loss of generality, we assume $N$ is not injective. Since $N$ is not injective then $\id(N)=1$.
We know $\Omega^{-1} N$ is injective. By Matlis' decomposition theory 
$$\Omega^{-1} N:= \underset{\frak{p} \in \Spec(R)}\oplus E(R/\frak{p})^ {\mu(\frak{p},N)}.$$
Recall that $\Spec(R)=\lbrace 0,\frak{m}\rbrace$, 
so $$ \Omega^{-1} N=E(R)^{{m}_1} \oplus E(R/\frak{m})^{{m}_2}=Q^{{m}_1} \oplus E(R/\frak{m})^{{m}_2} $$
where $m_i < \infty $. Now apply $\Hom(M,-)$ to the sequence $\eta $, we have
$$\Hom_{R}(M,\Omega^{-1} N) \overset{\psi}{\longrightarrow} \Ext_{R}^1 (M,N) \longrightarrow \Ext_{R}^1(M,E(N)).$$
Let $K:=\ker (\psi)$, then
$$0 \longrightarrow K \longrightarrow \Hom_{R}(M,\Omega^{-1} N) \longrightarrow \Ext_{R}^1 (M,N) \longrightarrow 0.$$
Applying $\Hom_{R}(Q,-)$ to the above sequence, we have
$$\Delta : \Ext_R^{1} (Q,\Hom_{R}(M,\Omega^{-1} N)) \longrightarrow \Ext_R^{1} (Q, \Ext_{R}^1 (M,N)) \longrightarrow \Ext_R^{2} (Q,K).$$
In view of \cite[Theorem 4.2]{E}  $\pd(Q) = 1$.   This shows that $\Ext_R^{2} (Q,K)=0$.
Now it is enough to show that $\Ext_R^{1} (Q,\Hom_{R}(M,\Omega^{-1} N))=0$.
First we analyze that $\Hom_{R}(M,\Omega^{-1} N)$

$$\begin{array}{ll}
\Hom_{R}(M,\Omega^{-1} N)&=\Hom_R(M,Q)^{\oplus {m}_{1}} \oplus \Hom_R(M,E(R/\frak{m}))^{\oplus {m}_{2}}\\
&= (\oplus Q) \oplus  ( \oplus \Hom_R (M,E(R/\frak{m})) \\
&=(\oplus Q) \oplus ( \oplus M^\vee).
\end{array}$$

Now
$$\Ext_R^{1} (Q,\Hom_{R}(M,\Omega^{-1} N)) = \oplus \Ext _R ^{1}(Q,Q) \oplus (\oplus \Ext_R^{1} (Q, M^\vee).$$
Recall that  $Q$ is injective. This implies that $\Ext _R ^{1}(Q,Q)=0$.
Since $\Ext_R^{1} (Q, M^\vee) \cong \Tor_1 ^{R} (Q, M^\vee) $, and due to the flatness of $Q$, it is zero.
\end{proof}

Let us extend the previous observation:

\begin{proposition}\label{7}  Let $(R,\frak{m})$ be an integral domain and $M$ and $N$ be two  $R$-modules such that  $\id_R (N) \leq 1 $.    Then $\Ext_R^{1}(M,N)$ is weakly cotorsion.
\end{proposition}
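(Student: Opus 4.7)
The plan is to mirror the structure of Observation~3 while replacing its dimension-specific decomposition of $\Omega^{-1}N$ by a dimension-free adjunction argument. First, use $\id_R(N)\le 1$ to fix an injective coresolution $\eta\colon 0\to N\to I^0\to I^1\to 0$; applying $\Hom_R(M,-)$ yields the four-term exact sequence
$$0\to \Hom_R(M,N)\to \Hom_R(M,I^0)\to \Hom_R(M,I^1)\to \Ext_R^1(M,N)\to 0,$$
which I would split into $(A)\colon 0\to \Hom_R(M,N)\to \Hom_R(M,I^0)\to K\to 0$ and $(B)\colon 0\to K\to \Hom_R(M,I^1)\to \Ext_R^1(M,N)\to 0$, where $K$ is the image of the middle map.

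The core technical input, replacing the explicit 1-dimensional decomposition used by Matlis, is the identification
$$\Ext_R^i(Q,\Hom_R(M,I))\;\cong\;\Hom_R(\Tor_i^R(Q,M),\,I)\;=\;0\qquad(i\ge 1)$$
for every injective $R$-module $I$. This comes from the tensor--hom adjunction $\Hom_R(Q,\Hom_R(M,I))=\Hom_R(Q\otimes_R M,I)$ unfolded along a projective resolution of $Q$, using the flatness of $Q$ (so $\Tor_{\ge 1}^R(Q,M)=0$) together with the injectivity of $I$ (so that $\Hom(-,I)$ commutes with homology). In particular both $\Hom_R(M,I^0)$ and $\Hom_R(M,I^1)$ are strongly cotorsion, which is the hypothesis-free substitute for Matlis' 1-dimensional decomposition.

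The last step is to feed $(A)$ and $(B)$ through $\Hom_R(Q,-)$. The above vanishing collapses the long exact sequence from $(B)$ to $\Ext_R^1(Q,\Ext_R^1(M,N))\cong \Ext_R^2(Q,K)$ and the long exact sequence from $(A)$ to $\Ext_R^2(Q,K)\cong \Ext_R^3(Q,\Hom_R(M,N))$. The main obstacle I see is closing the loop by establishing $\Ext_R^3(Q,\Hom_R(M,N))=0$; a natural tool for this is the two-row hypercohomology spectral sequence
$$E_2^{p,q}=\Ext_R^p(Q,\Ext_R^q(M,N))\ \Longrightarrow\ \Ext_R^{p+q}(Q\otimes_R M,N),$$
whose abutment vanishes in total degrees $\ge 2$ (from $\id_R(N)\le 1$ combined with the flatness of $Q$, giving $Q\otimes^L_R M=Q\otimes_R M$) and whose only non-trivial rows are $q=0,1$. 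I expect the delicate point will be extracting the desired vanishing from the $d_2$-differentials without being trapped by the tautological identification $\Ext_R^3(Q,\Hom_R(M,N))\cong \Ext_R^1(Q,\Ext_R^1(M,N))$ that the same spectral sequence also supplies.
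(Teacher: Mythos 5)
Your reduction is correct as far as it goes, and you have in fact put your finger on the genuine difficulty. From $(B)$ and the vanishing $\Ext^{\geq 1}_R(Q,\Hom_R(M,I))\cong\Hom_R(\Tor^R_{\geq 1}(Q,M),I)=0$ you get $\Ext^1_R(Q,\Ext^1_R(M,N))\cong\Ext^2_R(Q,K)$, and from $(A)$ you get $\Ext^2_R(Q,K)\cong\Ext^3_R(Q,\Hom_R(M,N))$; this is all fine. But the last vanishing is never established, and your proposed spectral sequence cannot establish it: with only the rows $q=0,1$ and an abutment $\Ext^{p+q}_R(Q\otimes_RM,N)=\prod\Ext^{p+q}_R(Q,N)$ vanishing in total degree $\geq 2$, the only information in that range is that $d_2\colon E_2^{p,1}\to E_2^{p+2,0}$ is an isomorphism for $p\geq 1$ --- i.e.\ exactly the identification $\Ext^1_R(Q,\Ext^1_R(M,N))\cong\Ext^3_R(Q,\Hom_R(M,N))$ you already have from $(A)$ and $(B)$. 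So the argument is circular at the decisive point and the proof is incomplete. It does close whenever $\Ext^3_R(Q,-)=0$, e.g.\ when $\pd_R(Q)\leq 2$ (in particular for countable domains, and for the $1$-dimensional regular case used in the corollary that follows).

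You should also know that the paper's own proof stumbles at exactly this spot. It invokes the ``exact sequence'' $0\to\Hom_R(M,E^0)\to\Hom_R(M,E^1)\to K\to 0$, from which $\Ext^2_R(Q,K)$ would indeed be trapped between two vanishing groups. But that sequence is not exact: the map $\Hom_R(M,E^0)\to\Hom_R(M,E^1)$ has kernel $\Hom_R(M,N)$ and its image is $K$, so the correct short exact sequence is $0\to\Hom_R(M,N)\to\Hom_R(M,E^0)\to K\to 0$, which leads straight back to the uncontrolled term $\Ext^3_R(Q,\Hom_R(M,N))$. In other words, your honestly flagged dead end is the paper's hidden one; as stated, the proposition needs either an additional hypothesis such as $\pd_R(Q)\leq 2$ or a genuinely new idea for killing $\Ext^3_R(Q,\Hom_R(M,N))$.
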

\begin{proof}
Suppose that $\eta: 0 \longrightarrow N \longrightarrow E^0 \longrightarrow E^1 \longrightarrow 0$ is an injective resolution of $N$.
Applying $\Hom_R (M,-)$ to $\eta$, we have
$$\Hom_R (M,E^1) \overset{\psi}{\longrightarrow} \Ext_R^{1} (M,N) \longrightarrow \Ext_R^{1} (M,E^0).$$
Let $K:= \ker (\psi) $, then the following sequence
$$\Phi :0 \longrightarrow K \longrightarrow \Hom_R (M,E^1) \longrightarrow \Ext_R^{1} (M,N) \longrightarrow 0,$$
is exact.
Applying $\Hom_R (Q,-)$ to $\Phi$, induces the following exact sequence:
$$\Ext_R^{1} (Q, \Hom_R (M,E^1)) \longrightarrow \Ext_R^{1}(Q, \Ext_R^{1}(M,N)) \longrightarrow \Ext_R^{2}(Q,K) .$$
We know $\Ext_R^{1} (Q, \Hom_R (M,E^1)) =0$, so the desired claim holds if one can show that $\Ext_R^{2}(Q,K)=0$.
Let the following exact sequence
$$\Psi :0 \longrightarrow M^{\vee_{E_0}} \longrightarrow M^{\vee_{E_1}} \longrightarrow K \longrightarrow 0,$$
where $\vee_i (-):= \Hom_R (-,E_i)$.
Applying $\Hom_R(Q,-)$ to $\Psi$ yields that the following sequence
$$\Delta: \Ext_R^2 (Q, M^{\vee_{E_1}}) \longrightarrow \Ext_R^2 (Q,K) \longrightarrow \Ext_R^3 (Q, M^{\vee_{E_0}}), $$
is exact. Now, recall that for $i>0$
$$\Ext_R^i (Q, M^{\vee_{E_i}}) \cong \Tor_i^R (Q, M) ^{\vee_{E_i}}. $$
Since $Q$ is flat, $\Tor_+^R (Q, M) ^{\vee_{E_i}}=0$.
Apply this along with $\Delta$, then $\Ext_R^2 (Q,K)=0$. From this, $\Ext_R^{1}(Q, \Ext_R^{1}(M,N))=0$ as claimed.
\end{proof}

\begin{corollary}\label{8} Let $(R,\frak{m})$ be a 1-dimensional regular local ring and $M$ and $N$ be any $R$-modules. Then $\Ext_R^{1}(M,N)$ is weakly cotorsion.
\end{corollary}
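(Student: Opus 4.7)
The plan is to reduce this immediately to Proposition \ref{7}. A 1-dimensional regular local ring is a discrete valuation ring, hence its global dimension equals its Krull dimension, which is $1$. In particular, every $R$-module $N$ has $\id_R(N) \leq 1$, since injective dimension is bounded above by global dimension.

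Therefore the hypothesis of Proposition \ref{7} is satisfied for every pair $(M,N)$ of $R$-modules, and the conclusion $\Ext_R^1(M,N)$ is weakly cotorsion follows at once.

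The only point that might deserve a brief word is the global-dimension fact, but this is completely standard for regular local rings: if one prefers, one can argue directly that over a DVR, for any $N$ the canonical exact sequence
\[
0 \longrightarrow N \longrightarrow E(N) \longrightarrow E(N)/N \longrightarrow 0
\]
has $E(N)/N$ injective, because quotients of injectives by submodules are injective over a hereditary ring (and a DVR is hereditary). So there is no obstacle; the corollary is a one-line application of the previous proposition.
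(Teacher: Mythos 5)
Your proof is correct and is exactly the paper's argument: the paper's proof of this corollary is the one-line observation that every module over a 1-dimensional regular local ring has injective dimension at most~$1$, whence Proposition \ref{7} applies. Your extra justification via the hereditary property of a DVR is a fine (standard) way to see that fact.
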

\begin{proof}
Just note that any $R$-module has  injective dimension at most 1.
\end{proof}

Let us recover one of delicate result of Matlis \cite[10.4]{mat2} via an elementary  calculation:
\begin{proposition}\label{9} Let $(R,\frak{m})$ be a countable integral domain. Let $M$ be torsion $R$-module and $\id (N) < \infty $. Then $\Ext_R^{i}(M,N)$ is weakly cotorsion for $i=0,1$. In fact, $$\Ext_R^{>0} (Q,\Ext_R^{i}(M,N))=0.$$
\end{proposition}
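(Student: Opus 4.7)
The plan is to combine two ingredients: Kaplansky's classical theorem that every countably generated flat $R$-module has projective dimension at most one (so $\pd_R(Q) \leq 1$ once $R$ is countable), and the standard derived-functor identity $\Ext_R^p(Q, \Hom_R(X, E)) \cong \Hom_R(\Tor_p^R(Q, X), E)$ whenever $E$ is injective. Because $M$ is torsion and $Q$ is flat, $Q \otimes_R M = 0$, so this identity simultaneously gives $\Hom_R(Q, \Hom_R(M, E)) = 0$ and $\Ext_R^{\geq 1}(Q, \Hom_R(M, E)) = 0$ for every injective $E$. The rest is a chase through an injective resolution of $N$.

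Concretely, fix an injective resolution $0 \to N \to E^0 \to E^1 \to \cdots$ and set $F^j := \Hom_R(M, E^j)$, so that $\Ext_R^j(M,N) = H^j(F^\bullet)$ and each $F^j$ satisfies $\Hom_R(Q, F^j) = 0 = \Ext_R^{\geq 1}(Q, F^j)$. Writing $Z^j := \Ker(F^j \to F^{j+1})$ and $B^{j+1} := \im(F^j \to F^{j+1})$, the argument proceeds in three short-exact-sequence steps. Applying $\Hom_R(Q,-)$ to $0 \to B^{j+1} \to F^{j+1} \to F^{j+1}/B^{j+1} \to 0$ and using $\Hom_R(Q, F^{j+1}) = 0$ yields $\Hom_R(Q, B^{j+1}) = 0$. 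Next, the long exact sequence of $0 \to Z^j \to F^j \to B^{j+1} \to 0$ reads
\[
\Hom_R(Q, F^j) \to \Hom_R(Q, B^{j+1}) \to \Ext_R^1(Q, Z^j) \to \Ext_R^1(Q, F^j),
\]
whose flanking terms both vanish, giving $\Ext_R^1(Q, Z^j) = 0$. Finally, the sequence attached to $0 \to B^j \to Z^j \to \Ext_R^j(M,N) \to 0$ reads
\[
\Ext_R^1(Q, Z^j) \to \Ext_R^1(Q, \Ext_R^j(M,N)) \to \Ext_R^2(Q, B^j),
\]
in which the left term vanishes by the previous step and the right vanishes by $\pd_R(Q) \leq 1$. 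Thus $\Ext_R^1(Q, \Ext_R^j(M,N)) = 0$ for every $j$, and combined with $\Ext_R^{\geq 2}(Q,-) = 0$ one obtains $\Ext_R^{>0}(Q, \Ext_R^j(M,N)) = 0$ for every $j$, in particular for $j = 0, 1$.

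The only real obstacle is propagating the $\Hom_R(Q,-)$-acyclicity of the $F^j$ to the cohomology modules $\Ext_R^j(M,N)$; as sketched, this is handled cleanly by running the three short exact sequences in the order $(B^{j+1}, F^{j+1})$, $(Z^j, F^j)$, $(B^j, Z^j)$. The countability hypothesis enters exactly once, to supply the bound $\pd_R(Q) \leq 1$ that lets the final chase terminate. One can observe in passing that the hypothesis $\id_R(N) < \infty$ is not strictly needed for the argument; it merely guarantees that only finitely many $\Ext_R^j(M,N)$ are nonzero.
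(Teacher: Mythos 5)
Your proof is correct, and it reaches the conclusion by a genuinely different (and cleaner) organization than the paper's. The paper argues by induction on $n=\id_R(N)$: it truncates an injective resolution as $0\to N\to E^0\to E_1\to 0$, kills $\Ext_R^2(Q,K)$ using $\pd_R(Q)\le 1$, and invokes the inductive hypothesis (the case $i=0$ applied to the cosyzygy $E_1$, which has injective dimension $n-1$) to get $\Ext_R^1(Q,\Hom_R(M,E_1))=0$; a separate sub-argument with the modules $L$ and $K$ then settles the case $i=0$. The three essential ingredients are the same as yours: $\pd_R(Q)\le 1$ from countability, the identity $\Ext_R^p(Q,\Hom_R(M,E))\cong\Hom_R(\Tor_p^R(Q,M),E)$ for injective $E$, and $Q\otimes_RM=0$ because $M$ is torsion. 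What your non-inductive chase through the cocycles $Z^j$ and coboundaries $B^j$ of the whole complex $\Hom_R(M,E^\bullet)$ buys is generality: it treats every degree $i$ uniformly (not just $i=0,1$), and, as you correctly observe, it never uses $\id_R(N)<\infty$ --- in the paper that hypothesis serves only to ground the induction. One cosmetic remark: the bound $\pd_R(Q)\le 1$ for a countably generated flat module is usually attributed to Jensen (via the telescope presentation of a countable direct limit of finitely generated free modules) rather than to Kaplansky, though this does not affect the argument.
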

\begin{proof}
The proof is by induction on $n:=\id(N) < \infty $. The case $n=0$ is clear: In view of \cite[A.11]{matcanada} we have
$$\Ext_R^1 (Q, \Hom_R (M, \emph{injective})) = \Hom_R (\Tor_1^R (Q,M), \emph{injective})\quad(\ast)$$Since $Q$ is flat, the corresponding $\Tor$ is zero. By $(\ast)$, $\Ext_R^1 (Q, \Hom_R (M,N))=0$. Also, $\Ext_R^1 (Q, \Ext^1_R (M,N))=0, $ as $N$ is injective.
We may and do assume $n>0$.
Now, let
$$\eta: 0\longrightarrow N \overset{\varphi}{\longrightarrow} E^0 \longrightarrow \Coker (\varphi) \longrightarrow 0,$$
where $E^0 := E_R(N)$ and let $E_1:=\Coker (\varphi) $.
Applying $\Hom (M,-)$ to $\eta$, we have
$$\ldots \longrightarrow \Hom_R(M,E_1) \overset{\psi}{\longrightarrow} \Ext_R^1 (M,N) \longrightarrow \Ext_R^1(M,E^0).$$
Since $E^0$ is injective, we have $\Ext_R^1(M,E^0)=0$. Let $K:=\Ker (\psi)$. We have the following exact sequence
$$\Psi: 0\longrightarrow K \longrightarrow \Hom_R (M,E_1) \longrightarrow \Ext_R^1 (M,N) \longrightarrow 0 .$$
Applying $\Hom_R (Q,-)$ to $\Psi $ yields that
$$\Ext_R^1 (Q, \Hom_R (M, E_1)) \longrightarrow \Ext_R^1 (Q, \Ext_R^1 (M, N)) \longrightarrow \Ext_R^2 (Q,K) .$$
Since $R$ is countable, the flat module $Q$ is as well. This implies that $\pd_R (Q)=1 $ and consequently,  $\Ext_R^2 (Q,K)=0$.
Since $\id_R (E_1) = \id_R (N) -1$, and by applying the inductive hypothesis, we may assume that $\Ext_R^1 (Q, \Hom(M, E_1))=0$.
So, $$\Ext_R^1 (Q, \Ext_R^1 (M, N))=0\quad(+)$$
Now, we show that $\Ext_R^0 (M,N)$ is cotorsion.
Indeed, we show that the following sequence
$$0 \longrightarrow \Hom_R (M,E^0) \longrightarrow \Hom_R (M,E_1) \longrightarrow K \longrightarrow 0$$
is exact.
Applying $\Hom_R (M,-) $ to $\eta$
$$0 \longrightarrow \Hom_R (M,N) \longrightarrow \Hom_R (M,E^0) \longrightarrow \Hom_R (M,E_1) \longrightarrow \Ext_R^1 (M,N) \longrightarrow \Ext_R^1 (M,E^0).$$
We break down it into two short exact sequences
	\begin{enumerate}
	\item[a)]  $  0 \longrightarrow \Hom_R (M,N) \longrightarrow \Hom_R(M,E^0) \longrightarrow L \longrightarrow 0$ and
	\item[b)]  $ 0 \longrightarrow L \longrightarrow \Hom_R(M,E_1) \longrightarrow \Ext_R^1 (M,N)$.
\end{enumerate}

Applying $\Hom_R (Q,-)$ to the exact sequence $b)$ yields that
$$0 \longrightarrow \Hom_R(Q,L) \longrightarrow \Hom_R(Q ,\Hom_R (M,E_1)).$$
Recall that $M$ is torsion. This gives  $Q \otimes M=0$. So, $$\Hom_R(Q ,\Hom_R (M,E_1)) \cong \Hom_R (Q \otimes M ,E_1)=0.$$  By plugging this in the previous sequence, $\Hom_R (Q,L)=0$.
Applying $\Hom_R (Q,-)$ to  the exact sequence displaced in  $a)$,  yields that
$$0=\Hom_R(Q,L) \longrightarrow \Ext_R^1 (Q,\Hom_R( M,N ) )\longrightarrow \Ext_R^1 (Q, \Hom_R (M, E^0))\quad(\ast\ast)$$
Since $$\Ext_R^1 (Q, \Hom_R (M, E^0)) \cong \Tor_1^{R} (Q, M)^{\vee_{E^0}}$$ and that $Q$ is flat, we observe that $\Tor_1^{R} (Q, M)$ is zero.
In view of $(\ast\ast)$ we conclude that $$ \Ext_R^1 (Q,\Hom_R( M,N )) = 0\quad(+,+)$$
By $(+)+(++)$ we get the desired claim. The particular case is trivial. Indeed,
since $\pd_R(Q)=1$ then $\Ext_R^ {\geq 2} (Q,-)=0$. The proof is now complete.
\end{proof}

\section{An application: the splitting result}

We need the following three standard lemmas:
\begin{lemma}\label{10}
Suppose $R$ is an integral domain and $I$ is an injective and $M$ any $R$-module. Then $\Hom_R (I,M)$ is torsion-free.
\end{lemma}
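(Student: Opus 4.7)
The plan is to reduce torsion-freeness of $\Hom_R(I,M)$ to divisibility of $I$, using only the fact that over an integral domain every injective module is divisible. So the key input I need to cite (or quickly verify via Baer's criterion) is: for every nonzero $r\in R$, the multiplication map $r\cdot:I\to I$ is surjective.

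Given this, I would proceed as follows. Fix $0\neq r\in R$ and $f\in\Hom_R(I,M)$ with $rf=0$; my goal is to show $f=0$, which is exactly the statement that multiplication by $r$ on $\Hom_R(I,M)$ is injective. From $rf=0$ I read off $r\cdot f(x)=0$ for every $x\in I$. Equivalently, by $R$-linearity of $f$, I have $f(rx)=0$ for every $x\in I$; that is, $f$ vanishes on $rI$. Now I invoke divisibility of $I$: given any $y\in I$, there exists $x\in I$ with $rx=y$, and therefore $f(y)=f(rx)=0$. Since $y\in I$ was arbitrary, $f=0$, as required.

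The only step that really uses anything beyond unwinding definitions is the divisibility claim, and this is a standard consequence of injectivity over a domain (apply the injective extension property to the inclusion $rR\hookrightarrow R$ to solve $ry=x$ inside $I$). No hypothesis on $M$ is needed, matching the statement. There is no serious obstacle, but it is worth being careful not to mistakenly assume $M$ is torsion-free, which would make the conclusion trivial for the wrong reason; the whole point is that the target $M$ can be arbitrary and the torsion-freeness of $\Hom_R(I,M)$ is forced purely by the source $I$ being divisible.
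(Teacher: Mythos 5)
Your argument is correct and is essentially the same as the paper's: both reduce the claim to the divisibility of the injective module $I$ over the domain and then use the identity $f(rx)=rf(x)=(rf)(x)$ to conclude that $f$ killed by a nonzero $r$ must vanish. The only cosmetic difference is that the paper phrases it as a contradiction starting from a point where $f$ is nonzero, while you argue directly that $f$ vanishes on $rI=I$.
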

\begin{proof}
Let $r \in R$ be such that $\exists f: I \longrightarrow M $ and that  $r.f = 0$. Suppose $f$ is not zero, then there exists $x \in I$ such that $f(x) \neq 0$. We know injective modules over an integral domain are divisible, so there exists $y \in I$ such that $x = ry$.
Now $$f(x) = f(ry) = rf(y) = (rf)(y) =0,$$as claimed.
\end{proof}

\begin{lemma}\label{11}
Suppose $(R,\frak{m}) $ is 1-dimensional and $M$ is a finitely generated weakly cotorsion $R$-module. Suppose in addition that
	\begin{enumerate}
	\item[a)]
$\id(M)<\infty$    and 	\item[b)] $N$ is any torsion-free module. \end{enumerate}Then $\Ext_R^1 (N,M)=0$.
\end{lemma}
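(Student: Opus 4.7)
The plan is to exploit the fact that over a 1-dimensional local domain, any finitely generated module of finite injective dimension must have $\id_R(M) \le 1$, together with the standard embedding of a torsion-free module into a direct sum of copies of $Q$.

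First I would observe that the hypotheses force $\id_R(M) \le 1$. Since $R$ is a 1-dimensional local domain, $\depth R \le 1$, and because $M$ is finitely generated with $\id_R(M)<\infty$, Bass's formula gives $\id_R(M)=\depth R \le 1$. (If $M=0$ or is already injective there is nothing to do; otherwise $\id_R(M)=1$.)

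Next I would use torsion-freeness of $N$ to embed it in an injective hull of the form $\bigoplus Q$. Concretely, $N\hookrightarrow Q\otimes_R N$ and the target is a $Q$-vector space, hence a direct sum of copies of $Q$; call the cokernel $C$, so that
\begin{equation*}
0\lo N\lo \bopm_{\lambda} Q\lo C\lo 0
\end{equation*}
is exact. Applying $\Hom_R(-,M)$ produces the long exact piece
\begin{equation*}
\Ext_R^{1}\!\Big(\bopm_{\lambda} Q,\,M\Big)\lo \Ext_R^{1}(N,M)\lo \Ext_R^{2}(C,M).
\end{equation*}

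The right-hand term is zero because $\id_R(M)\le 1$. For the left-hand term I would use the standard identification $\Ext_R^{1}(\bigoplus_\lambda Q,M)\cong \prod_\lambda \Ext_R^{1}(Q,M)$, which vanishes factor by factor since $M$ is assumed weakly cotorsion. Squeezing $\Ext_R^{1}(N,M)$ between two zeros gives the claim.

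The only place that requires a moment of care is justifying $\id_R(M)\le 1$ (so that $\Ext_R^{2}(C,M)=0$ for an arbitrary, possibly non-finitely-generated $C$); once this is in hand the rest is formal. There is no genuine obstacle: the argument is a two-line diagram chase built on the Matlis-style embedding $N\hookrightarrow\bigoplus Q$ and on the weakly cotorsion hypothesis.
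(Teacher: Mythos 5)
Your argument is correct and is essentially the paper's own proof: both embed the torsion-free module $N$ into a direct sum of copies of $Q$ (the paper via the injective envelope and Matlis decomposition, you via $N\hookrightarrow Q\otimes_R N$), then squeeze $\Ext_R^1(N,M)$ between $\prod\Ext_R^1(Q,M)=0$ (weak cotorsion) and $\Ext_R^2(C,M)=0$ (from $\id_R(M)\le\depth R=1$). The only cosmetic difference is how the embedding into $\bigoplus Q$ is produced; the key steps coincide.
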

\begin{proof}
Since $N$ is torsion-free then $\frak{m} \notin \Ass(N) $. Let
$$N \hookrightarrow E(N)= \oplus_{\frak{p} \in \Ass(N)} E(R/\frak{p})^ {\mu (\frak{p},N)}.$$
Since $\frak{m} \notin \Ass(N)$ and $\Spec (R)= \lbrace 0,\frak{m}\rbrace $ then $\Ass(N)= \lbrace0\rbrace $. Now
 $N \hookrightarrow E(N) = \oplus Q ^t, $
where $t:=\mu ( 0 ,N)$.
Let     $0 \longrightarrow N \overset{\rho}{\longrightarrow} E(N) \longrightarrow C:=\Coker (\rho) \longrightarrow 0$. Applying $\Hom(-,M)$ to it, yields the following exact sequence
$$\Ext_R^1 (E(N),M) \longrightarrow \Ext_R^1 (N,M) \longrightarrow \Ext_R^2 (C,M)\quad(\ast)$$
Since $M$ is finite and of finite injective dimension, we deduce that $\id(M)\leq\depth(R)=1$. By this,
$\Ext_R^2 (C,M)=0.$ Recall that $M$ is weakly co-torsion, so $$\Ext_R^1 (E(N),M)=\Ext_R^1 (\oplus_t Q,M)=\prod_t\Ext_R^1 (Q,M)=0.$$
Putting these in $(\ast)$, we conclude that $\Ext_R^1 (N,M)=0$.
\end{proof}

\begin{lemma}\label{21}
	Let $(R,\frak{m})$ be a local integral domain with fraction field $Q$. Then $Q/R$ is torsion.
\end{lemma}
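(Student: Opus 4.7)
The plan is to prove this directly from the definitions: a module $M$ over a domain $R$ is torsion if every element of $M$ is annihilated by some nonzero element of $R$, and $Q$ is the localization of $R$ at $R \setminus \{0\}$, so every element of $Q$ has a denominator in $R \setminus \{0\}$. The hypothesis that $(R,\mathfrak{m})$ is local will not be used at all; what matters is only that $R$ is an integral domain, which is exactly what guarantees that $R \setminus \{0\}$ is a multiplicatively closed set of nonzerodivisors and hence that the notions of ``fraction field'' and ``torsion element'' behave as expected.

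The key steps, in order, are the following. First, I would fix an arbitrary class $\overline{q} := q + R \in Q/R$ and write $q = a/b$ with $a \in R$ and $b \in R \setminus \{0\}$, which is possible since $Q$ is by construction the ring of such fractions. Second, I would observe that $b \cdot q = a \in R$ inside $Q$, so that after passing to the quotient $Q/R$ we get $b \cdot \overline{q} = \overline{a} = 0$. Third, since $b \neq 0$ in $R$, this exhibits $\overline{q}$ as a torsion element, and as $\overline{q}$ was arbitrary, every element of $Q/R$ is torsion, which is the required conclusion.

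There is no genuine obstacle here; the argument is essentially a one-line unpacking of the definitions of ``fraction field'' and ``torsion module'' over a domain. The only point worth flagging is that one must use the domain hypothesis to know that the chosen denominator $b$ is a nonzerodivisor, so that it qualifies as an annihilator witnessing torsion in the sense used throughout the paper. For this reason the lemma would fail verbatim if one replaced ``integral domain'' by a ring with zerodivisors, but in the setting of this paper the hypothesis is always in force.
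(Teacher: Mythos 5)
Your proposal is correct and is essentially the same argument as in the paper: pick $q=a/b$, note $b\cdot(q+R)=a+R=0$ with $b\neq 0$. The paper additionally observes that the denominator can be taken in $\frak{m}$ when the class is nonzero, but this plays no role in the argument, and your remark that locality is unused is accurate.
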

\begin{proof}
	Let $\alpha + R \in Q/R$ be non-zero. then $\alpha = r/s $ where $r \in R$ and $s \in R \setminus \lbrace0\rbrace $.
	Since $\alpha + R \neq 0$, then $\alpha \notin R$ and $s \in \frak{m}$. (any element of $R \setminus \lbrace \frak{m} \rbrace$ is invertible). Now
	$$\begin{array}{ll}
	s.( \alpha + R )&=(s \alpha) +R\\
	&= ( s . r/s) + R\\
	&=r + R\\
	&= 0.
	\end{array}$$
By definition, $Q/R$ is torsion.
\end{proof}
\begin{lemma}\label{2of3}
	Let $0\to M\to L\to N\to 0$ be an exact sequence of finitely generated  modules where $N$ and $M$ are  as Lemma \ref{11}. Then $L=N\oplus M$.
\end{lemma}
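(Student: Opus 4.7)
The plan is to reduce Lemma \ref{2of3} directly to Lemma \ref{11}. By hypothesis, $M$ is a finitely generated, weakly cotorsion $R$-module with $\id_R(M)<\infty$, and $N$ is (finitely generated and) torsion-free. These are precisely the hypotheses of Lemma \ref{11}, which therefore yields the vanishing $\Ext^1_R(N,M)=0$.

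Next, I would invoke the standard interpretation of $\Ext^1$: equivalence classes of short exact sequences $0\to M\to L\to N\to 0$ are in bijection with elements of $\Ext^1_R(N,M)$, the zero class corresponding to the split extension $L\cong M\oplus N$. Concretely, applying $\Hom_R(N,-)$ to the given sequence produces a piece of long exact sequence
\[
\Hom_R(N,L)\longrightarrow \Hom_R(N,N)\overset{\partial}{\longrightarrow}\Ext^1_R(N,M),
\]
and vanishing of the target forces $\partial(\id_N)=0$, so $\id_N$ lifts to a section $N\to L$. This produces the desired splitting $L\cong M\oplus N$.

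The main (and in fact only) obstacle is already absorbed in the preceding Lemma \ref{11}; once that vanishing of $\Ext^1_R(N,M)$ is available, Lemma \ref{2of3} becomes a purely formal application of the splitting criterion for short exact sequences. I do not anticipate any additional subtlety beyond verifying that the finiteness and torsion-freeness hypotheses transfer verbatim from the statement to the hypotheses of Lemma \ref{11}.
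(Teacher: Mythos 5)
Your proposal is correct and follows exactly the paper's own argument: apply $\Hom_R(N,-)$ to the sequence, use Lemma \ref{11} to kill $\Ext^1_R(N,M)$, and lift $\id_N$ to a section $N\to L$ to obtain the splitting. No differences worth noting.
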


\begin{proof}
Apply $\Hom_R(N,-)$ to  $0\to M\to L\to N\to 0$ gives us $$\Hom_R(N,L)\stackrel{\pi}\lo \Hom_R(N,N)\lo\Ext^1(N,M).$$In the light of Lemma \ref{11} we see $\Ext^1(N,M)=0$, i.e., $\pi$ is surjective. There is $f:N\to L$ such that $\pi(f)=\id_{N}$. So, $L=N\oplus M$.
\end{proof}

\begin{notation} Let $M$ be an $R$-module. 
	\begin{enumerate}
	\item[i)]   By	$t(M) $ we mean $\langle m \in M | rm= 0 \emph{ for some nonzero r in R}\rangle$.
	\item[ii)]  By $H_{\frak{m}}^{i}(M)$ we mean the $i^{th}$ local cohomology module of $M$ with respect to $\fm$.
\end{enumerate}

\end{notation}
\begin{proposition}\label{12}
Let $(R,\frak{m})$ be a 1-dimensional Gorenstein integral domain and $M$ be $R$-module such that $\id_R(t(M)) < \infty $.
$$\Ext^1_R(Q/R,M)=\Ext^1_R(Q/R , t(M)) \oplus \Ext^1_R(Q/R , t(M)).$$
\end{proposition}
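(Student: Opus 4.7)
My plan is to apply $\Hom_R(Q/R,-)$ to the canonical torsion decomposition
$$0\lo t(M)\lo M\lo M/t(M)\lo 0\qquad(\ast)$$
and analyze the resulting long exact sequence, reading the statement as a splitting of $\Ext^1_R(Q/R,M)$ into the torsion and torsion-free contributions $\Ext^1_R(Q/R,t(M))$ and $\Ext^1_R(Q/R,M/t(M))$.

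First I would kill the boundary terms. By Lemma \ref{21}, $Q/R$ is torsion, and $M/t(M)$ is by construction torsion-free, so every $R$-linear map $Q/R\to M/t(M)$ has image that is simultaneously torsion and torsion-free and hence vanishes; this gives $\Hom_R(Q/R,M/t(M))=0$. By the Matlis computation recorded in Remark \ref{14}, $\pd_R(Q)=1$ in a $1$-dimensional domain, whence $\pd_R(Q/R)\le 1$ and $\Ext^2_R(Q/R,-)\equiv 0$. The long exact sequence therefore collapses to
$$0\lo \Ext^1_R(Q/R,t(M))\lo \Ext^1_R(Q/R,M)\lo \Ext^1_R(Q/R,M/t(M))\lo 0.\qquad(\star)$$

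To split $(\star)$, the cleanest route is to split $(\ast)$ itself and then use additivity of $\Hom_R(Q/R,-)$. Since $R$ is $1$-dimensional Gorenstein local, $\depth R=1$, so the finite-injective-dimension hypothesis forces $\id_R(t(M))\le 1$. With Proposition \ref{7} ensuring that $\Ext^1_R$-terms with second variable $t(M)$ are weakly cotorsion, and with the Gorenstein identification $Q/R\cong H^1_\fm(R)\cong E_R(R/\fm)$ (so that any torsion module of finite injective dimension has its minimal injective resolution assembled from $E_R(R/\fm)$-summands only), I would invoke Lemma \ref{11} with $N:=M/t(M)$ torsion-free and inner module $t(M)$ of finite injective dimension to conclude $\Ext^1_R(M/t(M),t(M))=0$; this splits $(\ast)$, and $(\star)$ follows.

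The main obstacle is that Lemma \ref{11} carries a finite-generation hypothesis on the inner module that the present proposition does not impose on $M$. To bridge this, one either extends Lemma \ref{11}'s argument to the non-finitely-generated setting by exploiting the Gorenstein-enforced shape of $t(M)$ (a two-step extension of direct sums of $E_R(R/\fm)$, which allows the key step of replacing an injective cover of $N$ by copies of $Q$ to go through with the weak cotorsion input), or one splits $(\star)$ directly by verifying
$$\Ext^1_R\bigl(\Ext^1_R(Q/R,M/t(M)),\,\Ext^1_R(Q/R,t(M))\bigr)=0,$$
using that the first factor is $h$-reduced and weakly cotorsion by Remark \ref{14} and that the second inherits an injective-like structure from $t(M)$ via the identification $Q/R\cong E_R(R/\fm)$. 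Either route ultimately rests on the Gorenstein/$1$-dimensional structure forcing the injective geometry of $t(M)$ to be compatible with $Q/R$.
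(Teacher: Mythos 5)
Your first half matches the paper: you apply $\Hom_R(Q/R,-)$ to $0\to t(M)\to M\to \overline{M}\to 0$, kill $\Hom_R(Q/R,\overline{M})$ by the torsion/torsion-free clash and $\Ext^2_R(Q/R,-)$ by $\pd_R(Q/R)\le 1$, and arrive at the short exact sequence $(\star)$. The divergence, and the gap, is in your ``cleanest route'': you propose to split the original sequence $(\ast)$ by proving $\Ext^1_R(M/t(M),t(M))=0$. This is not merely blocked by the finite-generation hypothesis of Lemma \ref{11}; the vanishing you want is false under the stated hypotheses. Take $R=\mathbb{Z}_{(p)}$ (a $1$-dimensional regular, hence Gorenstein, local domain, so every module automatically has injective dimension at most $1$) and $M=\prod_{n\ge 1}\mathbb{Z}/p^n\mathbb{Z}$: it is classical that $t(M)$ is not a direct summand of $M$, so $\Ext^1_R(\overline{M},t(M))\neq 0$. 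This is exactly why the paper defers the splitting of $M$ itself to Corollary \ref{16}, where completeness of $R$ and finite generation of $M$ are added. The content of Proposition \ref{12} is that the derived sequence $(\star)$ splits even when $(\ast)$ does not, so any argument routed through splitting $(\ast)$ is proving a stronger, and in this generality false, statement.

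Your fallback (splitting $(\star)$ directly) is the paper's actual route, but your sketch assigns the relevant properties to the wrong factors and omits the decisive steps. To invoke Lemma \ref{2of3} (equivalently Lemma \ref{11}) on $(\star)$ one needs: the quotient term $\Ext^1_R(Q/R,\overline{M})$ to be \emph{torsion-free} --- the paper gets this from the shift $\Ext^1_R(Q/R,\overline{M})\cong \Hom_R(Q/R,E(\overline{M})/\overline{M})$, using $E(\overline{M})=\oplus Q$ and the injectivity of $Q/R$, together with Lemma \ref{10}; and the sub term $\Ext^1_R(Q/R,t(M))$ to be \emph{weakly cotorsion} (Matlis, \cite[Theorem 10.1]{mat2}) and finitely generated of finite injective dimension, which the paper supplies via the identification $\Ext^1_R(Q/R,N)\cong\widehat{N}$ for finitely generated $N$, applied to $t(M)$ and $\overline{M}$. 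None of these ingredients is established in your sketch, and your phrase ``the first factor is $h$-reduced and weakly cotorsion'' attaches the cotorsion property to the factor for which torsion-freeness is what is actually required. As written, neither of your two routes closes the argument.
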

\begin{proof}Let $\overline{M} := M/t(M)$.
We apply $\Hom_R(Q/R,-)$ to the short exact sequence
$$0 \longrightarrow t(M) \longrightarrow M \longrightarrow \overline{M} \longrightarrow 0,$$
and deduce the following exact sequence
$$\Hom_R (Q/R, \overline{M}) \to \Ext_R^1 (Q/R, t(M)) \to \Ext_R^1(Q/R,M) \to \Ext_R^1 (Q/R ,\overline{M} ) \to \Ext_R^2 (Q/R,M).$$
Since $R$ is Gorenstein, $\id(R)=1$. Also, $\pd(Q)=1$. In view of $0\to R\to Q \to Q/R \to 0$ we see $Q/R$ is injective and of projective dimension at most one. In particular,  $\Ext_R^2 (Q/R,M)=0$.
Thanks to Lemma \ref{21}, $Q/R$ is torsion. Since $\overline{M}$ is torsion-free,
we know $\Hom_R (Q/R , \overline{M})=0$. Thus, the sequence
$$0 \longrightarrow \Ext_R^1 (Q/R, t(M)) \longrightarrow \Ext_R^1(Q/R,M) \longrightarrow \Ext_R^1 (Q/R ,\overline{M} ) \longrightarrow 0,$$is exact.
Now we claim that $\Ext_R^1(Q/R , \overline{M})$ is torsion-free.
Indeed let $E:= E(\overline{M}) $ be the injective envelope of $\overline{M}$.
Recall that $E=\oplus Q$. This gives $\Hom_R (Q/R, E ) =0$, as the first component is torsion and the second is torsion-free. Let $0\to \overline{M}\to E\to E/\overline{M}\to 0$ be the natural sequence. By the standard shifting:
$$0=\Hom_R (Q/R, E )\to  \Hom_R (Q/R, E/\overline{M} )\stackrel{}\lo\Ext_R^1 (Q/R , \overline{M} )\lo\Ext_R^1 (Q/R , E )=0, $$and apply Lemma \ref{10} to see $\Ext_R^1(Q/R , \overline{M})$ is torsion-free.
Thanks to \cite[Theorem 10.1]{mat2} we know
$\Ext_R^1 (Q/R, t(M))$
is weakly co-torsion.

Since the ring is Gorenstein and of dimension one,  $Q/R \cong E(k)$. Recall that $\pd_R(Q)=1$, and in view of the exact sequence $ 0 \to R \to Q \to Q/R \to 0$ we observe that  $\pd_R(E(k))=\pd_R(Q/R)=1$. This shows $\Ext_R^1 (E(k), -)$ is right exact. By a theorem of Jensen (see  \cite{moh}) we know that for finitely generated $R$-module $N$
$$N \cong \hat{N} \cong \Ext_R^1 (H_{\fm}^1 (R) ,N) \cong \Ext_R^1 (Q/R,N).$$
Now apply this for $N:= t(M) $ and $ \overline{M}$.
In particular, we are in the situation of 
Lemma \ref{2of3}. This implies 
that
$$\Ext^1_R(Q/R,M)=\Ext^1_R(Q/R , t(M)) \oplus \Ext^1_R(Q/R , t(M)),$$
as claimed.
\end{proof}

\begin{corollary}\label{16}
Let $(R,\frak{m})$ be a 1-dimensional Gorenstein complete local integral domain and $M$ be a finitely generated $R$-module of finite injective dimension. Then $M \cong t(M) \oplus \overline{M}$.
\end{corollary}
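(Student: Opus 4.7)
The plan is to combine Proposition \ref{12} (the $\Ext$-decomposition) with Jensen's theorem (which, as cited in the proof of \ref{12}, recovers any finitely generated module $N$ over the complete ring $R$ as $N\cong\Ext_R^1(Q/R,N)$). These two ingredients together recast the decomposition of $\Ext^1_R(Q/R,M)$ as a decomposition of $M$ itself.

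\medskip

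\textbf{Step 1: verify the hypothesis of Proposition \ref{12}.} I need $\id_R(t(M))<\infty$. Since $M$ is finitely generated and $R$ is noetherian, $t(M)$ is finitely generated. Every nonzero element of $t(M)$ is killed by a nonzero element of the 1-dimensional local domain $R$, so $\Supp(t(M))\subseteq\{\fm\}$ and hence $t(M)$ has finite length. Because $R$ is 1-dimensional Gorenstein, $\id_R(k)=\depth(R)=1$, and any finite length module is an iterated extension of copies of $k$; therefore $\id_R(t(M))\le 1<\infty$.

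\medskip

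\textbf{Step 2: apply Proposition \ref{12} and then Jensen.} With the hypothesis in hand, Proposition \ref{12} gives
$$\Ext^1_R(Q/R,M)\;\cong\;\Ext^1_R(Q/R,t(M))\oplus\Ext^1_R(Q/R,\overline{M}).$$
Now apply Jensen's isomorphism $N\cong\widehat N\cong\Ext^1_R(Q/R,N)$ separately to each of the finitely generated modules $M$, $t(M)$ and $\overline M$; this is legal because $R$ is complete, so all three satisfy $N\cong\widehat N$. Substituting these three isomorphisms into the displayed formula yields $M\cong t(M)\oplus\overline M$.

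\medskip

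\textbf{Where the difficulty sits.} The whole argument is essentially a two-line deduction from Proposition \ref{12} and Jensen; the only point requiring genuine checking is the finiteness of $\id_R(t(M))$, i.e.\ the first step. Once one observes that the $1$-dimensional domain hypothesis forces $t(M)$ to have finite length and the Gorenstein hypothesis forces $k$ (and hence any finite length module) to have finite injective dimension, the rest is purely formal.
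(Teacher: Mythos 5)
Your proof is correct and follows essentially the same route as the paper: Proposition \ref{12} combined with Jensen's isomorphism $N\cong\widehat{N}\cong\Ext^1_R(Q/R,N)$ for finitely generated $N$ over the complete ring, applied to $M$, $t(M)$ and $\overline{M}$. Your Step 1 --- showing $t(M)$ has finite length because $R$ is a $1$-dimensional domain and hence finite injective dimension because $R$ is Gorenstein --- supplies the verification of the hypothesis $\id_R(t(M))<\infty$ of Proposition \ref{12}, which the paper's own proof leaves implicit.
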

\begin{proof}
Recall that 
 $N \cong \hat{N} \cong \Ext_R^1 (H_{\fm}^1 (R) ,N) \cong \Ext_R^1 (Q/R,N) $ for finitely generated $R$-module $N$.
Now apply this for $N:=M , t(M) $ and $ \overline{M}$. By Proposition \ref{12}, we get
$M \cong t(M) \oplus \overline{M}$.
\end{proof}

\section{Homology of Q}
Here is the higher dimensional  version of \cite[Proposition 5]{matcanada}:
\begin{proposition}\label{mtq}
Let $(R, \fm )$ be a complete local integral domain of dimension $d$.  Then $Q^{\vee} = \Hom_{R}( Q , E(k) )$ is $Q$-vector space. Also $Q^{\vee} = \oplus_t Q$ where
$$
t	=\left\{\begin{array}{ll}
1 &\mbox{if }  d=1 \\
\infty	&\mbox{if }  d>1.
\end{array}\right.
$$
\end{proposition}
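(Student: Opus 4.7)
The strategy is first to install a $Q$-vector space structure on $Q^\vee$, then to compute its dimension via an $R$-rank computation, and finally to handle the two dimension ranges separately.

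For the $Q$-vector space structure, left multiplication of $Q$ on itself is $R$-linear, and transporting this action via $(q\cdot f)(x):=f(qx)$ makes $Q^\vee=\Hom_R(Q,E(k))$ a $Q$-module. Since $Q$ is a field, $Q^\vee$ is automatically a free $Q$-module, so $Q^\vee\cong\bigoplus_t Q$ for a unique cardinal $t=\dim_Q Q^\vee$. Moreover, because $Q\otimes_R Q=Q$, we have $Q^\vee\otimes_R Q=Q^\vee$, hence $t=\operatorname{rank}_R Q^\vee$. To compute this rank I would apply $(-)^\vee$ (exact because $E(k)$ is injective) to the tautological sequence $0\to R\to Q\to Q/R\to 0$, yielding $0\to (Q/R)^\vee\to Q^\vee\to E(k)\to 0$. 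Since $E(k)$ is $\fm$-torsion, every element is annihilated by some nonzero element of $R$, so $E(k)\otimes_R Q=0$ and $\operatorname{rank}_R E(k)=0$; by rank additivity (flatness of $Q$), $t=\operatorname{rank}_R(Q/R)^\vee$.

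For $d=1$, the ring is automatically Cohen-Macaulay, and the long exact sequence of local cohomology applied to $0\to R\to Q\to Q/R\to 0$, together with the vanishing $H^i_\fm(Q)=0$ (a consequence of the flatness of $Q$ and the $\fm$-torsion of $H^i_\fm(R)$), gives $Q/R\cong H^1_\fm(R)$. Local duality for the complete one-dimensional Cohen-Macaulay domain then yields $(Q/R)^\vee\cong\omega_R$, the canonical module, which is a torsion-free rank-one $R$-module; hence $t=1$ and $Q^\vee\cong Q$.

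For $d\geq 2$ I must show $\operatorname{rank}_R(Q/R)^\vee=\infty$. Fix a nonzero $x\in\fm$; the inclusion $R_x/R\hookrightarrow Q/R$ dualizes to a surjection $(Q/R)^\vee\twoheadrightarrow (R_x/R)^\vee$, reducing the task to exhibiting $x$ with $\operatorname{rank}_R(R_x/R)^\vee=\infty$. Writing $R_x/R=\varinjlim R/x^n R$ yields the inverse-limit description $(R_x/R)^\vee=\vpl_n(0:_{E(k)}x^n)$, with transition maps multiplication by $x$. Since $d\geq 2$, one can choose $x$ so that $R/xR$ still has positive Krull dimension; then $(0:_{E(k)}x)\cong E_{R/xR}(k)$ has infinite length, and the $y$-action for some $y\in\fm\setminus xR$ supplies infinitely many compatible families in the inverse limit whose classes in $(R_x/R)^\vee\otimes_R Q$ are $Q$-linearly independent. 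The main obstacle lies precisely in this last step: one has to exhibit an explicit infinite family of compatible sequences (for instance, ``diagonally supported'' families indexed by the $y$-depth, of the shape $(\sum_{j=0}^{n-1} x^{-(n-j)}y^{-j-a})_n$ for varying $a\geq 1$) and verify that their $R$-linear span retains infinite rank after inverting all of $R\setminus\{0\}$, showing that the construction survives $y$-torsion cancellation.
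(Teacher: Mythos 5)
Your setup is sound: the $Q$-vector space structure on $Q^{\vee}$, the identification $t=\dim_Q Q^{\vee}=\operatorname{rank}_R Q^{\vee}$, and the reduction via the exact sequence $0\to (Q/R)^{\vee}\to Q^{\vee}\to E(k)\to 0$ to computing $\operatorname{rank}_R (Q/R)^{\vee}$ are all correct, and your $d=1$ case (via $Q/R\cong H^1_{\fm}(R)$ and local duality, giving $(Q/R)^{\vee}\cong \omega_R$ of rank one) is a clean, complete argument. The problem is the case $d\geq 2$, which carries all the content of the proposition, and there your argument stops at exactly the decisive point. You reduce to showing $\operatorname{rank}_R(R_x/R)^{\vee}=\infty$ and then only gesture at families of the shape $\bigl(\sum_{j=0}^{n-1}x^{-(n-j)}y^{-j-a}\bigr)_n$; such inverse-monomial expressions are not defined inside $E(k)$ for a general complete local domain (they presuppose a power-series presentation of $R$ and a Macaulay inverse-system description of $E(k)$), and even where they make sense you give no verification that the resulting classes remain linearly independent in $(R_x/R)^{\vee}\otimes_R Q$ --- a point you yourself flag as ``the main obstacle.'' Note that the infinite length of $(0:_{E(k)}x)\cong E_{R/xR}(k)$ does not help by itself: that module is $\fm$-power torsion and contributes nothing to rank, so the rank must be extracted from the inverse limit $\vpl_n(0:_{E(k)}x^n)$ itself, and no such argument is supplied. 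The proof is therefore genuinely incomplete.

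For comparison, the paper handles $d>1$ by contradiction without constructing explicit elements: assuming $t<\infty$, it takes a countable family $\Sigma$ of height-one primes, uses countable prime avoidance to form $A=S^{-1}R$ with $S=\fm\setminus\bigcup_{\fp\in\Sigma}\fp$, shows that each $E(A/\fp A)$ splits off the injective $A$-module $C=A^{\vee}$, and thereby embeds $\bigoplus_{\fp\in\Sigma}R_{\fp}$ into $Q^{\vee\vee}\cong\bigoplus_{t^2}Q$; tensoring with $Q$ yields $\infty=|\Sigma|\leq t^{2}$, a contradiction. If you want to complete your route, you need a device of comparable strength --- for instance, producing for infinitely many height-one primes an independent rank contribution to $(Q/R)^{\vee}$ --- which is essentially what the paper's direct-summand count accomplishes.
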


\begin{proof}
By Matlis, $d=1$ if and only if $Q^{\vee} = Q$. Now assume $ d > 1$ and we are going to show $Q^{\vee} = Q^{t}$ where $ t = \infty $. Suppose on the way of contradiction $ t < \infty$.
By \cite[31.2]{mat}, there is a countable family $\Sigma$ of height one prime ideals and
by countable prime avoidance $\frak{m} \nsubseteq \underset{\fp \in \Sigma}\cup \fp$.
Let $S:= \frak{m} \setminus \underset{\fp \in \Sigma}\cup \fp $ which is not empty.
Let $x,y \in S $ then $xy \in \frak{m}$, and so $xy \in S $. In other words,
$S$ is multiplicative closed.
Let $A:= S^{-1} R$ and let $C = A^{\vee}$. By using adjoint between the functors $\Hom$ and $\otimes$, we see $C$ is injective as an $A$-module.
$$\begin{array}{ll}
\Hom_{A}(C,C)&=\Hom_{A}(C,\Hom_{R}(A,E))\\
&= \Hom_{R}(C \otimes A, E)\\
&=\Hom_{R}(C,E)\\
&=C^{\vee}\\
&=A^{\vee \vee}.
\end{array}$$
Let $\fp \in \Sigma $ and recall that
$$\begin{array}{ll}
\Hom_{A}(A/\fp A,C)&=\Hom_{A}(A/\fp A,\Hom_{R}(A,E))\\
&= \Hom_{R}(A/\fp A  \otimes A, E)\\
&=\Hom_{R}(A/\fp A,E)\\
&=(A/\fp A)^{\vee \vee}.
\end{array}$$

For any $ \mathcal{M} \neq 0 $ we know $ \mathcal{M}^{\vee} \neq 0 $.
We apply this to see $(A/\fp A)^{\vee} = \Hom(A/\fp A,C)\neq0$.
Let $f :A/\fp A \longrightarrow C$ be non-zero. Clearly, $\eta := f (1 +\fp A ) \in C$ is not zero.
Then $\fp \eta =0$ and so $\eta = A / (0 : \eta ) = A/\fp A$, because $pA$ is a maximal ideal of $A$. Now, the assignment $$1+(0 : \eta )\mapsto \eta\in C,$$  defines another map
$\eta = A/\fp A \hookrightarrow C. $ This gives an embedding $E( A/ \fp A) \hookrightarrow C$.
The sequence
$$ 0 \longrightarrow E(A/\fp A) \longrightarrow C \longrightarrow \Coker \psi \longrightarrow 0$$
splits. $E (A/\fp A)$ is a directed summand of $C$. There exists $D_{ \fp}$ such that $D_{ \fp} \oplus E(A/\fp A) = C$
$$\begin{array}{ll}
\oplus _{\fp\in\Sigma}R_{ \fp} \hookrightarrow \oplus \widehat{R}_{ \fp} = &\oplus \Hom_{A}(E(A/\fp A),E(A/\fp))\\
&\subseteq \oplus \Hom_{R}(E(A/\fp A), E(A/\fp) \otimes \Hom(E(A/ \fp), D_{ \fp}))\\
&=\Hom_{R}(E(A/ \fp A),E(A/ \fp) \oplus D_{ \fp})\\
&=\sqcap \Hom(E(A/ \fp A),C)\\
&=( \oplus E(A/ \fp A),C)\\
&\subseteq \Hom(C,C)\\
&=A^{\vee \vee}\\
&\subseteq Q^{\vee \vee}\\
&=\oplus_t \oplus_t Q.
\end{array}$$

We know that $\oplus E(A/\fp A)$ is injective. In particular, it is a direct summand of $C$.
Recall that $E(A/\fp A) \hookrightarrow C$. This gives an embedding $$ \Sigma E(A/\fp A) \hookrightarrow C.$$This in turns imply that  $ \oplus E(A/\fp A) \hookrightarrow C$. There exists a module $D$ such that $$ D \ \oplus E(A/\fp A) ) = C.$$
Recall that we proved
$\oplus_{\fp \in \Sigma} E(A/\fp) \hookrightarrow \oplus Q$. We
apply  the exact functor  $Q \otimes -$ to it, and deduce
$$Q \otimes (\oplus_{\fp \in \Sigma} R_{ \fp}) \stackrel{\subseteq}\longrightarrow \oplus (Q \otimes Q ) \cong \oplus_{t^2} Q .$$

Also, we know $$Q \otimes ( \oplus R_{ \fp}) \cong \oplus (Q \otimes R_{ \fp}) \cong \oplus Q .$$
Let us apply this in our previous displayed formula, and deduce the following computation of rank of $Q$-vector spaces: $$\infty=\mid \Sigma\mid=\dim (\oplus_{\fp \in \Sigma} Q ) \leq t^{2}<\infty.$$ This is a contradiction.
\end{proof}

\begin{corollary}
	Let $(R, \fm )$ be a complete local integral domain. The following assertions are equivalent:\begin{enumerate}
			\item[i)]Any torsion-free module of finite rank   is $Q^t\oplus M$ where $M$ is finitely generated.
	\item[ii)]  $\dim R\leq 1$.
	\item[iii)]Any torsion-free module of finite rank  is Matlis reflexive.
	
	\item[iv)] Suppose $S$ is torsion-free and of finite rank. Then $\fm S=S$ iff $S$
	is injective.
	
	 	\end{enumerate}
\end{corollary}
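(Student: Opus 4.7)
The approach is to prove the ``easy'' implication (ii) $\Rightarrow$ (i), (iii), (iv) via the classical 1-dimensional Matlis theory, and then prove the three reverse implications (i) $\Rightarrow$ (ii), (iii) $\Rightarrow$ (ii), (iv) $\Rightarrow$ (ii) uniformly by exhibiting a single obstruction when $\dim R \geq 2$: the localization $R_\fp$ at any height-one prime $\fp$ of $R$.

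For (ii) $\Rightarrow$ (i) I would invoke Matlis' structure theorem: over a $1$-dimensional complete local Noetherian domain, every torsion-free module $S$ of finite rank splits as $S = dS \oplus (S/dS)$, where the maximal divisible submodule $dS$ is torsion-free injective of finite rank (hence $\cong Q^t$ for some $t < \infty$) and the reduced quotient $S/dS$ is finitely generated. Then (ii) $\Rightarrow$ (iii) follows by combining (i) with the computation $(Q^t)^{\vee\vee} \cong Q^t$ resulting from Proposition \ref{mtq} (which says $Q^\vee \cong Q$ when $d=1$), together with Matlis duality for the finitely generated summand $M$. For (ii) $\Rightarrow$ (iv), if $\fm S = S$ and $S$ is torsion-free, then for any nonzero $x \in \fm$ the equality $\sqrt{(x)} = \fm$ (which holds because $\Spec R = \{0, \fm\}$) yields $\fm^n \subseteq xR$ and hence $S = \fm^n S \subseteq xS \subseteq S$, forcing $xS = S$; combined with the unit case this makes $S$ into a $Q$-vector space, hence injective over the $1$-dimensional $R$, while the converse is automatic.

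For the reverse implications, assume $\dim R \geq 2$ and fix a height-one prime $\fp \subsetneq \fm$. The localization $R_\fp$ is torsion-free of rank one, and since $\fm \not\subseteq \fp$ every element of $\fm \setminus \fp$ becomes a unit in $R_\fp$, giving $\fm R_\fp = R_\fp$. Nakayama's lemma applied to the nonzero $\fm$-divisible module $R_\fp$ over the local ring $(R, \fm)$ implies that $R_\fp$ is not finitely generated. This single module refutes (i), (iii) and (iv) simultaneously. For (i): a decomposition $R_\fp = Q^t \oplus M$ with $M$ finitely generated forces $t \in \{0,1\}$ by rank; $t=1$ gives $R_\fp = Q$, contradicting $\fp R_\fp \neq 0$, while $t=0$ would make $R_\fp$ finitely generated, contradicting Nakayama. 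For (iii): any Artinian submodule of the torsion-free $R_\fp$ is $\fm$-torsion, hence zero, so the classical characterization of Matlis reflexive modules over a complete local Noetherian ring as extensions of an Artinian by a finitely generated module would again force $R_\fp$ to be finitely generated, contradiction. For (iv): picking any nonzero $x \in \fp$ gives $x R_\fp \subseteq \fp R_\fp \subsetneq R_\fp$, so $R_\fp$ fails to be divisible and is therefore not injective, although $\fm R_\fp = R_\fp$.

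The key technical input is Matlis' $1$-dimensional structure theorem used in (ii) $\Rightarrow$ (i), which I would cite rather than reprove; the reverse direction is then uniformly handled by the single module $R_\fp$ together with Nakayama's lemma.
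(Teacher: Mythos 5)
Most of your argument runs parallel to the paper's: the forward implications come from Matlis' one‑dimensional structure theorem \cite[Cor.\ 3]{matcanada} exactly as in the paper, and for the reverse implications the paper also uses a localization of $R$ as the single obstruction (it takes $R_f$ for $f\in\fm\setminus\fp$ rather than $R_\fp$, but the mechanism --- rank one, not finitely generated by Nakayama, $\fm$-divisible yet not divisible --- is the same). Your direct proof of (ii)$\Rightarrow$(iv) via $\sqrt{(x)}=\fm$ is a pleasant alternative to the paper's route through the decomposition plus Nakayama, and your (i)$\Rightarrow$(ii) and (iv)$\Rightarrow$(ii) are sound.

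The genuine gap is in (iii)$\Rightarrow$(ii). The characterization of Matlis reflexive modules over a complete local Noetherian ring (Enochs) puts the finitely generated module as the \emph{submodule} and the Artinian module as the \emph{quotient}: $M$ is reflexive iff there is an exact sequence $0\to S\to M\to A\to 0$ with $S$ finitely generated and $A$ Artinian. Your deduction uses the opposite order: you argue that the Artinian piece would be a submodule of the torsion-free module $R_\fp$, hence zero, hence $R_\fp$ finitely generated. With the correct order the Artinian piece is a quotient $R_\fp/S$, and since $\fm R_\fp=R_\fp$ this quotient satisfies $\fm A=A$ --- which a nonzero Artinian module can perfectly well do ($E(R/\fm)$ does), so no contradiction falls out. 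Indeed your argument, applied verbatim to $Q$ over a $1$-dimensional complete local domain, would ``prove'' that $Q$ is not reflexive (its only Artinian submodule is $0$ and it is not finitely generated), yet $Q$ is reflexive there and sits in $0\to R\to Q\to Q/R\to 0$ with $Q/R$ Artinian. The step is repairable --- for instance, localize a putative sequence $0\to S\to R_\fp\to A\to 0$ at a second height-one prime $\fq\neq\fp$ to get $(R_\fp)_\fq\cong Q$ while $S_\fq$ stays finitely generated over $R_\fq$, a contradiction --- but the paper's route is cleaner and avoids the mini-max characterization entirely: $Q$ itself is torsion-free of rank one, so (iii) forces $Q\cong Q^{\vee\vee}$, whereas Proposition \ref{mtq} shows that for $\dim R>1$ one has $Q^{\vee}\cong\oplus_{t}Q$ with $t=\infty$, whose dual is a $Q$-vector space of strictly larger dimension than one. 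I recommend replacing your (iii)$\Rightarrow$(ii) with that argument.
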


 \begin{proof}We may assume $R$ is not field.
 	
$i) \Rightarrow ii)$: If not, then there is prime ideal $\fp$ such that $0\subsetneqq \fp\subsetneqq \fm$. Let $f\in\fm \setminus \fp$. By the assumption $R_f=Q^t\oplus M$ where $M$ is finitely generated. Since rank $R_f$ is one, and it is not finitely generated we see $R_f=Q$. But, $\fp R_f$ is a nonzero prime ideal. This is a contradiction.

 $ii) \Rightarrow  i):$ This is in \cite[Cor 3]{matcanada}.

 $i) \Rightarrow iii)$: Let  $F$ be torsion free module of finite rank rank. By i) $F=Q^t\oplus M$ where $M$ is finitely generated. By $i) \Rightarrow ii)$, $\dim R=1$.  In view of Proposition \ref{mtq}, $Q^t$ is reflexive. Thanks to Matlis theory, $M$ is reflexive. So, $F$ is reflexive.

 $iii) \Rightarrow  i):$ Since $Q$ is reflexive, and in light of Proposition \ref{mtq}, one has $\dim R=1$.
 By $ii) \Rightarrow  i) $ we get the claim.
 
 $i) \Rightarrow  iv):$ Suppose $S$ is torsion-free and of finite rank and that $\fm S=S$. By i) $S=Q^t\oplus M$ where $M$ is finitely generated. Towards a contradiction, suppose $M\neq 0$. According to Nakayama's lemma,
 $\fm M\neq M$. Consequently, 
 $$\begin{array}{ll}
\fm S
 &= \fm(Q^t\oplus M)\\
 &=\fm Q^t\oplus \fm M \\
 &=  Q^t\oplus \fm M\\
 &\subsetneqq  Q^t\oplus  M\\
 &=S.
 \end{array}$$ This contradiction says that $M=0$.   So $S=Q^t$, i.e., $S$ is injective.
 
 Clearly, any injective module is divisible.
 
 $iv) \Rightarrow  ii):$
  If not, then there is prime ideal $\fp$ such that $0\subsetneqq \fp\subsetneqq \fm$.
  Let $S:=R_{\fp}$. Then $\fm S=S$, but $S$ is not injective.
 \end{proof}

The following extends \cite[Corollary, page 575]{matcanada}:
\begin{proposition}\label{30}
Let $R$ be an integral domain and $V$ is a ring between $R$ and $Q$. Let $T$ be a torsion-free $V$-module of rank 1. Then $\Ext_{V}^{i} (Q,T) \cong \Ext_{R}^{i} (Q,T)$. In particular $$\Ext_{R}^1(Q,T) =0 \Leftrightarrow \Ext_{V}^1(Q,T)=0.$$
\end{proposition}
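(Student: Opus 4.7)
The plan is to realize the claim as an instance of the standard base-change (change-of-rings) formula applied to the inclusion $R\hookrightarrow V$, exploiting that $Q$ is flat over $R$ and is simultaneously the fraction field of $R$ and of $V$. The hypothesis on $T$ will not really be used beyond it being a $V$-module.

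First I would check that $V\otimes_R Q\cong Q$ as $V$-modules. Since $V\subseteq Q$, every nonzero element of $R$ (and of $V$) is already a unit in $Q$, so the multiplication map $V\otimes_R Q\to Q,\;v\otimes q\mapsto vq$, is an isomorphism: surjectivity is clear, and any simple tensor $v\otimes q$ can be rewritten as $1\otimes vq$, giving injectivity. Equivalently, $Q$ is obtained from $R$ by inverting elements that already lie in $V$, so $V\otimes_R Q=Q$.

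Next I would pick an $R$-projective resolution $P_\bullet\twoheadrightarrow Q$. Because $Q$ is $R$-flat, $\Tor_i^R(V,Q)=0$ for all $i>0$, so the base-changed complex $V\otimes_R P_\bullet$ stays exact and yields a resolution of $V\otimes_R Q=Q$ by projective $V$-modules (each $V\otimes_R P_n$ is a summand of a free $V$-module). A termwise application of the restriction-extension adjunction
$$\Hom_V(V\otimes_R P_n,\,T)\;\cong\;\Hom_R(P_n,\,T)$$
produces an isomorphism of cochain complexes; taking cohomology yields
$$\Ext_V^i(Q,T)\;=\;\Ext_V^i(V\otimes_R Q,\,T)\;\cong\;\Ext_R^i(Q,T).$$
The ``in particular'' is the case $i=1$.

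The only point that needs care is assembling these pieces: verifying $V\otimes_R Q\cong Q$, observing that it is precisely the $R$-flatness of $Q$ that preserves exactness of the resolution under base change, and invoking the adjunction. Beyond that the argument is purely formal; in particular, the torsion-free rank-one hypothesis on $T$ plays no role in the isomorphism itself and is presumably retained because it matches the context of Matlis' applications.
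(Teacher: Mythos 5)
Your proof is correct and is essentially the paper's own argument: the paper invokes the change-of-rings spectral sequence $\Ext^i_V(\Tor_j^R(V,Q),-)\Rightarrow \Ext^{i+j}_R(Q,-)$, which collapses by the $R$-flatness of $Q$ and the identification $V\otimes_R Q\cong Q$ to precisely the base-change-plus-adjunction isomorphism you spell out with an explicit projective resolution (the paper even footnotes that the spectral sequence can be avoided). Your remark that the torsion-free rank-one hypothesis on $T$ plays no role also matches the paper, which establishes $\Ext^i_V(Q,-)\cong\Ext^i_R(Q,-)$ for an arbitrary $V$-module before specializing to $T$.
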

\begin{proof}
Since $T$ is torsion-free and of rank 1 then
$$T \hookrightarrow S^{-1}T \cong Q(V)=Q(R)=Q.$$
There is an spectral sequence\footnote{one may argue without any use of  spectral sequence.}
$$\E^{ij} := \Ext^i_V ( \Tor_j^R (T,Q), -) \Rightarrow \Ext_{R}^{i+j} ( Q,-)$$
where $-$ is any $V$-module. Since $Q$ is $R$-flat then $\Tor_+^R (-,Q)=0$. So, the spectral sequence collapses. This says
$E^{i0} \cong \Ext_{V}^{i+0} (Q,-)$ and $T \otimes  Q$. Since $R \subseteq T \subseteq Q$ and $Q$ is flat as an $R$-module,
 $$Q = R \otimes_R Q \subseteq T \otimes_R Q \subseteq Q \otimes_R Q =Q. $$
This gives $T \otimes_R Q =Q$,
 so $\Ext_{V}^{i} (Q,-) =\Ext_{R}^{i} (Q,-)$.
 Apply this for $(-):=T$, we get the claim.
\end{proof}

\begin{remark}Let $(R,\fm)$ be a noetherian local ring.  
\begin{enumerate}
	\item[i)]	If $R$ is a domain, then $Q$ is an injective as an $R$-module.

	\item[ii)]	If $E$ is an injective $R$-module, then $H_{\frak{m}}^{i}(E)=0$ for every $i > 0$.

	\item[iii)]  If $\dim (R) =1$, then $Q=R_{x}$ for every $0 \neq x \in \frak{m}$. Indeed, let
	$S:= \lbrace x^{n} \rbrace _{n \geq 1} $
	$$ \Spec (S^{-1} R) =\lbrace S^{-1} \fp \mid \fp \cap S=\emptyset  \emph{ and } \fp \in \Spec(R) \rbrace = \lbrace 0 \rbrace. $$
Since $R_x$ is zero dimensional domain, $R_x$ is a field. Also, $R_x \subseteq Q $ and $Q$ is  the smallest field containing $R$. In sum, $R_x = Q$.\item[iv)]
If $\dim (R) =1$ then the sequence  $0 \to R \to Q \to H_\frak{m}^1 (R) \to 0$ is exact.
\end{enumerate}
\end{remark}
Here, is the higher dimensional  version of \cite[Theorem 1]{matcanada}:
\begin{proposition}\label{31}  Let $(R,\fm)$ be an integral domain. Then $\dim Q(R)/R =\dim R -1$. In particular,
$Q/R$ is an artinian if and only if $\dim(R) =1$.
\end{proposition}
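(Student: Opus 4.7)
The plan is to compute $\Supp(Q/R)$ and then pin down $\dim(Q/R)$ via an elementary chain-of-primes argument. For any $\fp \in \Spec R$, localization gives $(Q/R)_\fp = Q/R_\fp$, which vanishes exactly when $R_\fp = Q$, and this forces $\fp = (0)$. Hence $\Supp(Q/R) = \Spec R \setminus \{(0)\}$. A small caveat is that $\Ann(Q/R) = 0$, because $rQ = Q$ for every nonzero $r \in R$, so the shortcut $\dim M = \dim R/\Ann M$ available for finitely generated modules is unavailable here; I would therefore read $\dim(Q/R)$ off as $\dim\Supp(Q/R) = \sup\{\dim R/\fp : \fp \in \Supp(Q/R)\}$.

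For the upper bound, given any nonzero prime $\fp$, a chain $\fp = \fq_0 \subsetneq \fq_1 \subsetneq \cdots \subsetneq \fq_k = \fm$ realizing $k = \dim R/\fp$ can be prepended with $(0) \subsetneq \fp$, yielding a chain in $\Spec R$ of length $k+1 \leq \dim R$. Thus $\dim R/\fp \leq \dim R - 1$ for every $\fp \in \Supp(Q/R)$. For the lower bound, pick a chain $(0) = \fp_0 \subsetneq \fp_1 \subsetneq \cdots \subsetneq \fp_d = \fm$ of length $d := \dim R$. The prime $\fp_1$ is nonzero and hence lies in $\Supp(Q/R)$, and the truncation $\fp_1 \subsetneq \cdots \subsetneq \fp_d$ descends to a chain of length $d-1$ in $\Spec(R/\fp_1)$, giving $\dim R/\fp_1 \geq d-1$. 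Together these two inequalities produce $\dim Q/R = \dim R - 1$.

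For the ``in particular'' clause, the converse direction is supplied by the remark immediately preceding the proposition: when $\dim R = 1$, the exact sequence $0 \to R \to Q \to H^1_\fm(R) \to 0$ identifies $Q/R$ with $H^1_\fm(R)$, which is artinian because $\fm$ is maximal and $R$ is noetherian. For the forward direction, artinianness of $Q/R$ forces $\Supp(Q/R) \subseteq \{\fm\}$ (a standard feature of artinian modules over noetherian local rings), and combining this with the first step yields $\Spec R = \{(0),\fm\}$; provided $R$ is not a field (in which case $Q/R = 0$), this pins $\dim R = 1$.

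I do not anticipate a serious technical obstacle. The only subtle point is that $\Ann(Q/R) = 0$, so the support-based definition of $\dim$ is essential; once that convention is fixed, everything reduces to an elementary chain manipulation together with the previously recorded identification $Q/R \cong H^1_\fm(R)$ in the one-dimensional case.
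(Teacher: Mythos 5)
Your argument is correct, but it follows a genuinely different route from the paper. The paper applies the local cohomology functor to $0 \to R \to Q \to Q/R \to 0$: since $Q$ is injective over a domain, $H^{i}_{\fm}(Q)=0$ for all $i$, so the long exact sequence gives a surjection $H^{d-1}_{\fm}(Q/R)\twoheadrightarrow H^{d}_{\fm}(R)\neq 0$ (Grothendieck non-vanishing), whence $\dim (Q/R)\geq d-1$ by Grothendieck vanishing; the upper bound comes from $Q/R$ being torsion. You instead compute the support directly, showing $(Q/R)_\fp = Q/R_\fp \neq 0$ exactly for $\fp\neq(0)$, so $\Supp(Q/R)=\Spec R\setminus\{(0)\}$, and then run an elementary chain-of-primes argument in both directions. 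Your route avoids local cohomology entirely (it only reappears in the one-dimensional identification $Q/R\cong H^1_\fm(R)$ for the ``artinian'' clause, which both proofs use), and it buys two things the paper's proof leaves implicit: an exact description of $\Supp(Q/R)$, which makes the forward implication ``artinian $\Rightarrow$ $\Spec R=\{(0),\fm\}$'' transparent, and an explicit acknowledgment that $\Ann(Q/R)=0$, so that $\dim$ must be read as $\dim\Supp$ rather than $\dim R/\Ann$ --- a convention the paper also needs (Grothendieck vanishing for non-finitely-generated modules is stated in terms of the support) but does not mention. The paper's approach is shorter given the machinery already recorded in the preceding remark, and its use of $H^{d-1}_\fm(Q/R)\neq 0$ gives slightly more information than the dimension alone. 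One small point in your favor: you flag the degenerate case where $R$ is a field (so $Q/R=0$ is artinian while $\dim R=0$), which the statement and the paper's proof silently ignore; do make sure the noetherian local hypotheses are explicitly invoked, since both your chain argument and the fact that artinian modules are supported only at $\fm$ rely on them.
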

\begin{proof}
We look at the exact sequence $ 0 \to R \to Q \to Q/R \to 0$ and the induced long exact sequence of local cohomology modules:
$$H_{\frak{m}}^{d-1} (Q) \longrightarrow H_{\frak{m}}^{d-1} (Q/R) \longrightarrow H_{\frak{m}}^{d} (R) \longrightarrow H_{\frak{m}}^{d} (Q)=0$$
By Grothendieck non-vanishing theorem
$H_{\frak{m}}^{d} (R) \neq 0$ and then $H_{\frak{m}}^{d-1} (Q/R) \neq 0$.
According to Grothendieck vanishing theorem, $\dim (Q/R) \geq d-1$.
Since $Q/R$ torsion, it is not of full dimension and then $d-1\leq \dim (Q/R) <d$. Therefore, $\dim (Q/R) =d-1$.

To see the particular case:	\begin{enumerate}
	\item[i)]
If $d > 1$, then $\dim Q/R >0$ and it is not an artinian.\item[ii)]
Suppose $d=1$. In this case we
apply previous remark to see that the following  $$0 \longrightarrow R \longrightarrow Q \longrightarrow H_\frak{m}^1 (R) \longrightarrow 0,$$is exact. So, $Q/R=H_\frak{m}^d (R)$, which is artinian.	\end{enumerate}
\end{proof}
Suppose $A$ is torsion-free. In \cite[3.2.3]{mat2}, Matlis proved a fundamental isomorphism: $$A\emph{ is cotrsion}\Longleftrightarrow\Hom_R(Q/R,Q/R\otimes_RA)\cong A.$$Let us use the concept of strongly cotrsion, and extend the isomorphism: 	
\begin{proposition}
	For any finitely generated and cotrsion module $A$ we have $$\Hom_R(Q/R,Q/R\otimes_RA)\cong A/ t(A).$$

\end{proposition}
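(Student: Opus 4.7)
The plan is to reduce the claim to the Matlis isomorphism recalled immediately above the proposition, applied to the torsion-free quotient $\overline{A} := A/t(A)$. Concretely, I would verify (a) $Q/R \otimes_R A \cong Q/R \otimes_R \overline{A}$ and (b) $\overline{A}$ is cotorsion; Matlis' result then yields $\Hom_R(Q/R, Q/R \otimes_R \overline{A}) \cong \overline{A}$, which combined with (a) finishes the proof.

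For (a), I tensor the standard exact sequence $0 \to t(A) \to A \to \overline{A} \to 0$ with $Q/R$. Since $Q$ is $R$-flat, the long exact $\Tor$ sequence of $0 \to R \to Q \to Q/R \to 0$ identifies $\Tor_1^R(Q/R, -)$ with the torsion functor, so $\Tor_1^R(Q/R, \overline{A}) = 0$. Next, $t(A)$ is a submodule of the finitely generated module $A$ over the Noetherian ring $R$, hence itself finitely generated and thus annihilated by some nonzero $r \in R$; since $r$ is a unit in $Q$, the quotient $Q/R$ is $r$-divisible, which yields $Q/R \otimes_R t(A) = 0$. The resulting $\Tor$ sequence then collapses to $Q/R \otimes_R A \cong Q/R \otimes_R \overline{A}$.

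For (b), I first claim $\Ext^i_R(Q, t(A)) = 0$ for every $i \geq 0$. With $r$ as above, multiplication by $r$ on $\Ext^i_R(Q, t(A))$ is simultaneously zero (being induced by the zero endomorphism of $t(A)$) and an isomorphism (being induced by multiplication by $r$ on $Q$, which is invertible), forcing the module to vanish. Applying $\Hom_R(Q, -)$ to $0 \to t(A) \to A \to \overline{A} \to 0$ and combining this vanishing with the cotorsion hypothesis $\Hom_R(Q, A) = \Ext^1_R(Q, A) = 0$, the fragment
$$\Hom_R(Q, A) \to \Hom_R(Q, \overline{A}) \to \Ext^1_R(Q, t(A)) \to \Ext^1_R(Q, A) \to \Ext^1_R(Q, \overline{A}) \to \Ext^2_R(Q, t(A))$$
of the long exact sequence reads $0 \to \Hom_R(Q, \overline{A}) \to 0 \to 0 \to \Ext^1_R(Q, \overline{A}) \to 0$. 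Hence $\overline{A}$ is cotorsion; being torsion-free, Matlis' isomorphism applies and delivers $\Hom_R(Q/R, Q/R \otimes_R \overline{A}) \cong \overline{A}$.

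The main (indeed only) subtle point is the ``double-action'' observation used to kill every $\Ext^i_R(Q, t(A))$; once that is in hand, (a) and (b) reduce to short diagram chases and the conclusion follows by appeal to the quoted Matlis isomorphism.
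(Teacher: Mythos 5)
Your proposal follows essentially the same route as the paper: reduce to $\overline{A}=A/t(A)$ by showing $Q/R\otimes_RA\cong Q/R\otimes_R\overline{A}$ and that $\overline{A}$ is cotorsion, then invoke Matlis' isomorphism for torsion-free cotorsion modules. The only difference is that where the paper cites Matlis' memoir for the vanishing $\Ext^{\ast}_R(Q,t(A))=0$ (bounded torsion) and for the two-out-of-three cotorsion step, you supply short direct arguments, which is perfectly fine.
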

\begin{proof}
	Let $\overline{A}:=A/t(A)$ and we look at $$0\lo t(A)\lo A\lo \overline{A}\lo 0\quad(+)$$ Apply $Q/R\otimes_R-$ to it yields that
	$$  Q/R\otimes_Rt(A)\lo Q/R\otimes_R A\lo Q/R\otimes_R\overline{A}\lo0 \quad(\ast)$$
	Clearly, $Q\otimes_Rt(A)=0$.	Now, we apply $-\otimes_Rt(A)$ to $0\to R\to Q\to Q/R\to 0$ gives us
	$$0  =Q\otimes_Rt(A)\to Q/R\otimes_Rt(A)\to 0 	 .$$ In other words, $Q/R\otimes_Rt(A)=0$.
	We put this in $(\ast)$ and see $$Q/R\otimes_R A\cong Q/R\otimes_R\overline{A}\quad(\dagger)$$

	Now, recall that finitely generated module is torsion-bounded. This gives
	that $t(A)$ is  torsion of bounded order.  According to \cite[page 4]{mat2}
we have the vanishing property	$\Ext^\ast_R(Q,t(A))=0$. By the teminology of Definition \ref{1},  
	$t(A)$ is strongly cotorsion. Recall that  $A$ is  cotorsion. In particular 
we are in the situation of \cite[Lemma 1.1.3]{mat2}.	Now, we apply it along with  $(+)$ to deduce that $\overline{A}$
	is cotorsion.
Let us apply the mentioned result of Matlis, in the torsion-free case, to deduce:$$\Hom_R(Q/R,Q/R\otimes_RA)\stackrel{(\dagger)}\cong\Hom_R(Q/R,Q/R\otimes_R\overline{A})\cong \overline{A}.$$
This is what we want to prove.
\end{proof}

\begin{remark}An R-module $B$ contains a unique largest h-divisible submodule
$\h(B)$. For any torsion  module $B$ Matlis proved in \cite[3.1]{mat2}
that $Q/R\otimes_R\Hom_R(Q/R,B)=\h(B)$.
It is easy to see $$Q/R\otimes_R\Hom_R(Q/R,M)=\h(t(M)),$$
where $M$ is any module. Indeed, it is enough to note that
$\Hom(Q/R,M/t(M))=0$ and apply \cite[3.1]{mat2}.
\end{remark}
The following is a higher dimensional
version of \cite[Theorem 13.2]{E}:

\begin{proposition}\label{312}  Let $(R,\fm)$ be a $d$-dimensional Cohen-Macaulay\footnote{recall that the Cohen-Macaulay assumption is automatically holds if $d=1$.}  integral domain such that $\fm$ generated by $d+1$ elements. Then $\id_R (Q(R)/R) =d -1$. In fact, $R$ is complete-intersection. 
\end{proposition}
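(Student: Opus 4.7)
The plan is to first show $R$ is a hypersurface (hence complete-intersection) and therefore Gorenstein, and then compare injective dimensions across the exact sequence $0 \to R \to Q \to Q/R \to 0$.

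For the complete-intersection claim: since $R$ is Cohen-Macaulay of dimension $d$ with $\mu(\fm) \leq d+1$, one has $\mu(\fm) - \depth(R) \leq 1$. I would pick a system of parameters $x_1,\ldots,x_d$ extending to a minimal generating set of $\fm$; by Cohen-Macaulayness this is a regular sequence, and $\overline{R}:=R/(x_1,\ldots,x_d)$ is an Artinian local ring whose maximal ideal requires at most one generator. Hence $\overline{R}$ is either a field (so $R$ is regular) or an Artinian principal ideal local ring (so $R$ is a hypersurface). In either case $R$ is complete-intersection, in particular Gorenstein, so $\id_R(R)=d$.

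For the computation of $\id_R(Q/R)$: the key input is that $Q$ is a flat $R$-algebra obtained by localization, so for any finitely generated (equivalently, finitely presented) $R$-module $M$, choosing a resolution by finitely generated projectives and using flatness of $Q$ yields
\[ \Ext^i_R(M,Q)\;\cong\;Q\otimes_R\Ext^i_R(M,R)\;\cong\;\Ext^i_Q(M\otimes_R Q,\,Q), \]
which vanishes for $i\geq1$ because $Q$ is a field. The long exact sequence in $\Ext^\bullet_R(M,-)$ applied to $0\to R\to Q\to Q/R\to 0$ then yields the shift
\[ \Ext^i_R(M,Q/R)\;\cong\;\Ext^{i+1}_R(M,R)\qquad (i\geq1). \]
Combining with $\id_R(R)=d$, the upper bound $\id_R(Q/R)\leq d-1$ is immediate, and the lower bound follows by taking $M=k$ and invoking $\Ext^d_R(k,R)\neq 0$, the top (and only non-zero) Bass number of a Gorenstein local ring.

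The only real obstacle is the bookkeeping in the first step if one wishes to avoid passing to the completion when identifying $R$ as a hypersurface; under the standard noetherian conventions, however, the depth-and-embedding-dimension accounting is routine, and once the Ext-with-localization identity $\Ext^i_R(M,Q)\cong Q\otimes_R\Ext^i_R(M,R)$ is in hand, the rest of the proof is essentially formal.
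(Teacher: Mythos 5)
Your proposal is correct and follows essentially the same route as the paper: first identify $R$ as a complete intersection (hence Gorenstein with $\id_R(R)=d$), then shift injective dimension along $0\to R\to Q\to Q/R\to 0$ using that $Q$ has no higher $\Ext$ against the relevant test modules, i.e.\ that $Q$ is injective. The only difference is that you unpack the two facts the paper simply cites (the hypersurface criterion for Cohen--Macaulay rings with $\mu(\fm)\le d+1$, and the injectivity of $Q$ via flat base change of $\Ext$), which is fine.
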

\begin{proof}
	By \cite[Ex. 21.2]{mat}
	$R$ is complete-intersection. In particular, $\id(R)=d$. Recall that $Q$ is injective.  By $ 0 \to R \to Q \to Q/R \to 0$ we get to the desired claim.
\end{proof}
 Let us connect to Gorenstein-homology.
As a sample, by $\Gpd_R(-)$ we mean the Gorenstein projective dimension of a module $(-)$. For its definition, e.g., see  \cite[Definition 2.1]{f}. The following is Gorenstein analogue of \cite[Cor. 4]{matcanada}:
\begin{observation}
  Let $(R,\fm)$ be a $1$-dimensional complete local  integral domain and $S$  be torsion-free and of finite rank. Then $\Gpd_R(S)<\infty$ iff there is  a totally reflexive and finitely generated module $M$ such that $S=Q^t\oplus M$.
\end{observation}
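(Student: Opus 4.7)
The plan is to combine three ingredients already available: the decomposition corollary following Proposition~\ref{mtq} (which under the assumptions hands us the shape $S=Q^t\oplus M$ with $M$ finitely generated), the Auslander--Bridger formula for finitely generated modules of finite Gorenstein projective dimension, and Matlis' computation $\pd_R(Q)=1$. The claim is then really a statement that Gorenstein projectivity of $S$ is detected by its finitely generated summand.

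First I would invoke the corollary after Proposition~\ref{mtq} (which applies because $\dim R\leq 1$ and $R$ is complete local) to write $S=Q^t\oplus M$ with $M$ finitely generated. Since $S$ is torsion-free, so is its direct summand $M$. Because $R$ is a $1$-dimensional domain, $R$ is Cohen--Macaulay of depth one, and any nonzero element of $\fm$ is a non-zero-divisor on the torsion-free module $M$, giving $\depth_R M\geq 1=\depth R$; thus $M$ is maximal Cohen--Macaulay.

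For the forward direction, assume $\Gpd_R(S)<\infty$. Since $M$ is a direct summand of $S$, $\Gpd_R(M)\leq \Gpd_R(S)<\infty$, and $M$ is finitely generated. The Auslander--Bridger formula (see \cite{f}) applies and yields $\Gpd_R(M)=\depth R-\depth M=1-1=0$, so $M$ is totally reflexive, as required. For the converse, suppose $M$ is totally reflexive, i.e.\ $\Gpd_R(M)=0$. By Matlis \cite[Thm 4.2]{E} we have $\pd_R(Q)=1$, and since arbitrary direct sums of modules of projective dimension at most $1$ again have projective dimension at most $1$, we get $\pd_R(Q^t)\leq 1$, and therefore $\Gpd_R(Q^t)\leq 1$. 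Using that Gorenstein projective dimension behaves as the supremum on direct sums, $\Gpd_R(S)=\max\{\Gpd_R(Q^t),\Gpd_R(M)\}\leq 1<\infty$.

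The main obstacle I expect is purely bookkeeping: making sure the Auslander--Bridger formula is legitimately available (it requires $R$ noetherian, $M$ finitely generated of finite Gorenstein projective dimension, which is our situation) and that the ``Gpd of a direct sum is the sup'' property is invoked correctly in the nonfinitely generated summand $Q^t$. Once the decomposition corollary is in hand, both directions reduce to reading off $\Gpd$ of the two summands separately, so the argument is short.
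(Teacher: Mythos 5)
Your proposal is correct and follows essentially the same route as the paper: decompose $S=Q^t\oplus M$ via Matlis' theorem for complete local $1$-dimensional domains, apply the Auslander--Bridger formula to the finitely generated torsion-free (hence depth-one) summand $M$ to get total reflexivity, and for the converse use $\Gpd_R(Q)\leq\pd_R(Q)=1$ together with the behaviour of Gorenstein projective dimension on direct sums. The only cosmetic difference is that you route the decomposition through the corollary after Proposition~\ref{mtq} rather than citing Matlis directly, and you spell out the direct-sum bookkeeping that the paper leaves implicit.
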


\begin{proof}
First, assume that $\Gpd_R(S)<\infty$. It follows by \cite[Cor. 3]{matcanada}
that there are $t\geq 0$ and a finitely generated module $M$ such that $S=Q^t\oplus M$.
 As $M$ is a directed summand of $S$,
 $\Gpd(M)<\infty$. Since $M$ is 
finitely generated, its $G$-dimension is finite. We may assume $M\neq 0$. Since it is torsion-free, $\depth(M)>0$, and so $0<\depth(M)\leq \dim(M)=1$. By Auslander-Bridger formula (see \cite[Theorem 1.25]{f})
$$1+\Gdim_R(M)=\depth(M)+\Gdim_R(M)=\depth(R)=1,$$
$M$ is totally reflexive.

In order to see the converse part, it is enough to recall that
$\Gpd_R(Q)\leq \pd_R(Q)=1$ (see \cite[Proposition 2.12]{f}) and apply \cite[Cor. 3]{matcanada}.
\end{proof}

The following extends a result of Matlis \cite[Theorem 41]{matc} by replacing reflexive with totally  reflexive:
\begin{corollary}\label{312to}(Matlis-Bass)  Let $(R,\fm)$ be a $1$-dimensional  integral domain such that $\fm$ generated by $2$ elements. Then any ideal of $R$  is totally reflexive.
\end{corollary}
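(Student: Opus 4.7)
The plan is to reduce to the complete intersection situation via the preceding Proposition \ref{312} and then invoke the Auslander--Bridger formula. Concretely, I would first note that the hypotheses of Corollary \ref{312to} are exactly those of Proposition \ref{312} with $d=1$: the ring $(R,\fm)$ is a $1$-dimensional integral domain (automatically Cohen--Macaulay, as remarked in the footnote of Proposition \ref{312}) with $\fm$ generated by $d+1 = 2$ elements. Hence Proposition \ref{312} applies and yields that $R$ is a complete intersection; in particular $R$ is Gorenstein of dimension one.

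Next, I would fix a nonzero ideal $I$ of $R$ (the zero ideal is trivially totally reflexive). Since $R$ is a domain, $I$ is torsion-free, and because it is a nonzero ideal it contains a nonzerodivisor, so $\depth_R(I)\geq 1$. On the other hand $\depth_R(I)\leq \dim_R(I)\leq \dim R=1$, whence $\depth_R(I)=1$. Thus $I$ is a maximal Cohen--Macaulay $R$-module.

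The key input is now that over a Gorenstein local ring every finitely generated module has finite Gorenstein dimension, so $\Gdim_R(I)<\infty$, and the Auslander--Bridger formula (\cite[Theorem 1.25]{f}) gives
\[
\Gdim_R(I)=\depth(R)-\depth_R(I)=1-1=0.
\]
By definition, $\Gdim_R(I)=0$ means $I$ is totally reflexive, which is what we wanted.

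The argument is almost automatic once Proposition \ref{312} is in hand; the only delicate point is the depth computation for $I$, and that only requires the standard observation that a nonzero ideal in a domain contains a nonzerodivisor. There is no genuine obstacle beyond citing the two ingredients in the right order.
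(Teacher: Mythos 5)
Your proposal is correct and follows essentially the same route as the paper: invoke Proposition \ref{312} to get that $R$ is a complete intersection (hence Gorenstein), then apply the Auslander--Bridger formula to a nonzero ideal $I$ to conclude $\Gdim_R(I)=0$. You are in fact slightly more careful than the paper in spelling out why $\depth_R(I)=1$ and why $\Gdim_R(I)<\infty$ (needed before Auslander--Bridger can be applied), both of which the paper leaves implicit.
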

\begin{proof}
In the light of  \ref{312}, $R$ is Gorenstein. Let $I$ be a nontrivial ideal. Following  Auslander-Bridger formula we have
$$ 1+\Gdim_R(I)=\depth(I)+\Gdim_R(I)=\depth(R)=1,$$
and so, $\Gdim_R(I)=0$. In other words,  $I$ is totally reflexive.
\end{proof}

\section{From Matlis to Vasconcelos}

First, we reprove the following funny result of Vasconcelos \cite[1.2]{vo}:

\begin{theorem}
	Let $R$ be a noetherian ring and $M$ be finitely generated. Then any surjective $f:M\to M$ is an isomorphism.
\end{theorem}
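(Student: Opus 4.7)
The plan is to exploit the noetherian hypothesis through the ascending chain of kernels of iterates of $f$, rather than through a Cayley--Hamilton / determinant trick. Since $R$ is noetherian and $M$ is finitely generated, $M$ is a noetherian $R$-module, so every ascending chain of submodules stabilizes.

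First I would form the chain $\Ker(f) \subseteq \Ker(f^2) \subseteq \Ker(f^3) \subseteq \cdots$, which is ascending because $f^n(m)=0$ forces $f^{n+1}(m)=f(0)=0$. By noetherianness of $M$ this chain terminates: there is an integer $n \geq 1$ with $\Ker(f^n) = \Ker(f^{n+1})$.

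Next I would use surjectivity of $f$ to conclude $\Ker(f)=0$. Since $f$ is surjective, so is each iterate $f^n$. Given $m \in \Ker(f)$, surjectivity of $f^n$ furnishes some $m' \in M$ with $f^n(m')=m$. Then
$$f^{n+1}(m') = f(f^n(m')) = f(m) = 0,$$
so $m' \in \Ker(f^{n+1}) = \Ker(f^n)$, which gives $m = f^n(m') = 0$. Hence $f$ is injective, and together with surjectivity this shows $f$ is an isomorphism.

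There is no real obstacle here: the only subtlety is to resist reaching for Nakayama's lemma or the $R[x]$-module determinant trick (which also works but is heavier), and instead notice that the noetherian hypothesis feeds directly into the ACC on the kernel filtration. The argument makes no use of any local structure, so it applies verbatim to any noetherian $R$ and any finitely generated $M$, as stated.
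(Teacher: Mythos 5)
Your proof is correct, and it takes a genuinely different and considerably more economical route than the paper's. You feed the noetherian hypothesis into the ascending chain $\Ker(f)\subseteq\Ker(f^2)\subseteq\cdots$, stabilize it at some $n$, and then use surjectivity of $f^n$ to pull any $m\in\Ker(f)$ back to some $m'$ with $f^n(m')=m$ and $f^{n+1}(m')=0$, forcing $m=0$; every step checks out. The paper instead localizes to reduce to a local ring, inducts on the Krull dimension, and runs a length argument: in the base case $\dim R=0$ it counts lengths in $0\to K\to M\to M\to 0$, and in the inductive step it shows $K:=\Ker f$ has finite length, builds an ascending tower of finite-length submodules $K\subseteq K_1\subseteq K_2\subseteq\cdots$ with $f(K_{n+1})=K_n$, and applies the length count to the finite-length module they generate. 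Your argument buys simplicity and generality: it avoids localization, dimension theory, and lengths entirely, and it works verbatim for any noetherian module $M$ over an arbitrary ring (the hypotheses ``$R$ noetherian and $M$ finitely generated'' are used only to guarantee the ACC on submodules of $M$). What the paper's approach buys, arguably, is a template that recurs elsewhere in that section (reduction to the artinian case and Matlis duality for the co-Hopfian corollary), but as a proof of this particular theorem your version is the cleaner one.
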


\begin{proof}
The properties under consideration are local. Then we assume $R$ is local. So, $d:=\dim R<\infty$. The proof is by induction. The proof is by induction on $d$. Suppose  $d=0$.
Then any finitely generated module is of finite length. Let $K:=\Ker f$. Taking the length from $o\to K\to M \to M\to M\to 0$, we see $\ell(K)=0$, and so $K=0$. Then we may assume $d>0$ and the desired claim holds for all ring of dimension less than $d$. In particular, $\ell(K)<\infty$. Since $f$ is surjective, there is $K_1\subseteq M$ such that $$0\lo \ker(f \upharpoonright)\lo K_1\stackrel{f \upharpoonright} \lo K\lo 0$$is exact. Since $\ker(f \upharpoonright)\subseteq K$,   we have $\ell(\ker(f \upharpoonright))<\infty$. So, $\ell(K_1)<\infty$. By adding $K$ if needed, we may assume in addition that $K\subseteq  k_1$. By repeating this, there are finite length module $K\subseteq K_n\subseteq M$ such that $f(K_{n+1})=K_n$. Let $L:=\langle K_n: n\in\mathbb{N}\rangle\subseteq M$. It is supported at the maximal ideal and is noetherian, because $M$ is finitely generated. So, $\ell(L)<\infty$.
Let $\overline{f}:L\to f(L)$ be the surjective map defined by $f$. Since $f(K_n)=K_{n+1}$ we see $f(L)=L$. By taking length, $\ell(\ker(\overline{f}))=0$. Thus,
$\ker(\overline{f})=0$.
In particular, $K=0$, as claimed.
\end{proof}
The following is the higher dimensional version of \cite[5.16]{E}:
\begin{corollary}
Let $R$ be a local ring and $A$ be artinian. Then any injective $f:A\to A$ is an isomorphism.
\end{corollary}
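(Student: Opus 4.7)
The plan is to reduce the statement to the Vasconcelos theorem just established, by dualizing the injection $f: A \to A$ into a surjective endomorphism of a finitely generated module through Matlis duality.

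First, I would pass to the completion. Since $A$ is artinian over the local ring $R$, every element of $A$ is annihilated by some power of $\fm$, so $A$ carries a canonical $\widehat{R}$-module structure compatible with its $R$-module structure, and any $R$-linear endomorphism of $A$ is automatically $\widehat{R}$-linear. Without loss of generality we may therefore assume $R$ is complete noetherian local. Next, apply the Matlis functor $(-)^{\vee} = \Hom_R(-, E_R(k))$: this functor is exact, sends artinian modules to finitely generated modules, and converts injections into surjections. Hence $f^{\vee} : A^{\vee} \to A^{\vee}$ is a \emph{surjective} endomorphism of a finitely generated module over a noetherian ring.

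By the Vasconcelos theorem proved above, $f^{\vee}$ is an isomorphism. Applying $(-)^{\vee}$ once more, and using the biduality $A \cong A^{\vee \vee}$ that holds for artinian modules over a complete local ring, the original $f$ is also an isomorphism, as desired. The main subtle point is the passage to the completion together with the identification of $f$ as an $\widehat{R}$-linear map; once that is in place, Matlis duality reduces the injective-endomorphism statement mechanically to the surjective-endomorphism statement already proved.
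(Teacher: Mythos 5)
Your proposal is correct and follows essentially the same route as the paper: pass to the complete case, apply Matlis duality to convert the injection into a surjective endomorphism of a finitely generated module, invoke the Vasconcelos theorem just proved, and dualize back using reflexivity. The only cosmetic difference is that the paper tracks the cokernel $C$ and shows $C^{\vee}=0$, whereas you apply biduality to $f$ itself.
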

\begin{proof}
	We may assume that $R$ is complete. There is an exact sequence $$0\lo A\stackrel{f}\lo A\lo C:=\Coker(f)\lo 0$$Matlis' dualizing,
	gives $$0\lo C^{ \vee} \lo A^{ \vee}\stackrel{g}\lo A^{ \vee}\lo 0$$Since $A^{ \vee}$ is finitely generated, $C^{ \vee}$ is as well. These class is Matlis reflexive. In view the above theorem, $g$ is an isomorphism. So, $C^{ \vee}=0$. Then, by Matlis dual theory, $C\cong C^{\vee \vee}=0,$ as claimed.
\end{proof}

Suppose $(R,\fm)$ is a 1-dimensional domain.
In \cite[Theorem 7.1]{E} Matlis proved that
any nonzero map $Q/R\to Q/R$ is surjective iff $R$ is analytically unramified. In this 1-dimensional case, recall that $Q/R=H_\frak{m}^d (R)$. Here, is the higher version:

\begin{proposition}\label{coh}
	Let $(R,\fm)$ be a d-dimensional Cohen-Macaulay ring. Then any nonzero $f:H^d_\fm(R)\to H^d_\fm(R)$ is surjective iff $R$ is analytically unramified.
\end{proposition}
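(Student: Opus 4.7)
The plan is to identify $\End_R(H^d_{\fm}(R))$ with $\widehat{R}$ and then translate the surjectivity condition into a ring-theoretic statement about elements of $\widehat{R}$, which can in turn be read off from the structure of the minimal primes of $\widehat{R}$.

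Because $\widehat{R}$ is a complete $d$-dimensional Cohen--Macaulay local ring, it admits a canonical module $\omega_{\widehat{R}}$, and local duality gives $H^d_{\fm}(R)=H^d_{\fm}(\widehat{R})\cong\omega_{\widehat{R}}^{\vee}$. Matlis duality together with the classical isomorphism $\End_{\widehat{R}}(\omega_{\widehat{R}})\cong\widehat{R}$ (valid for every equidimensional Cohen--Macaulay ring) produces a canonical ring isomorphism
$$\End_R\bigl(H^d_{\fm}(R)\bigr)\;\cong\;\widehat{R}.$$
Under this identification, each $R$-linear endomorphism $f$ of $H^d_{\fm}(R)$ is multiplication by a unique element $r\in\widehat{R}$, and because $\omega_{\widehat{R}}$ is faithful over $\widehat{R}$ the assignment $r\mapsto r\cdot$ is injective; in particular $f\neq 0$ iff $r\neq 0$.

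Next, I would compute the cokernel of multiplication by $r$ on $H^d_{\fm}(\widehat{R})$. Factoring $r\cdot:\widehat{R}\to\widehat{R}$ through $\widehat{R}/\Ann_{\widehat{R}}(r)\cong r\widehat{R}$ and applying local cohomology to the short exact sequence $0\to r\widehat{R}\to\widehat{R}\to\widehat{R}/r\widehat{R}\to 0$, the vanishing $H^{d+1}_{\fm}(-)=0$ yields
$$\Coker\bigl(r\cdot\colon H^d_{\fm}(\widehat{R})\longrightarrow H^d_{\fm}(\widehat{R})\bigr)\;\cong\;H^d_{\fm}(\widehat{R}/r\widehat{R}).$$
By Grothendieck's vanishing and non-vanishing theorems, this cokernel is zero exactly when $\dim\widehat{R}/r\widehat{R}<d$, equivalently when $r$ lies outside every minimal prime of $\widehat{R}$ (all of which have dimension $d$ by the equidimensionality of Cohen--Macaulay rings). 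Since $\Ass\widehat{R}=\Min\widehat{R}$ for Cohen--Macaulay rings, this translates into the condition that $r$ be a nonzerodivisor on $\widehat{R}$.

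Combining the two steps, every nonzero $f$ is surjective if and only if every nonzero $r\in\widehat{R}$ is a nonzerodivisor on $\widehat{R}$. Since any nonzero nilpotent is automatically a zerodivisor, this forces the nilradical of $\widehat{R}$ to vanish, i.e.\ $\widehat{R}$ is reduced, which is precisely the analytically unramified property of $R$. The converse direction is extracted from the same cokernel calculation, with the reducedness of $\widehat{R}$ being used to exclude the zerodivisors that would obstruct surjectivity. The main obstacle I anticipate is the identification $\End_R(H^d_{\fm}(R))\cong\widehat{R}$ in the non-Gorenstein Cohen--Macaulay setting, which has to be set up via the canonical module and Matlis duality; once that is in hand, the surjectivity criterion follows from a short long-exact-sequence argument.
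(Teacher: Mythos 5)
Your reduction is sound and in places more self-contained than the paper's: where the paper Matlis-dualizes each endomorphism to a map $\omega_R\to\omega_R$ and checks $\omega_R$-regularity (then matches $\Ass(\omega_R)$ with $\Ass(R)$), you compute the cokernel directly, $\Coker(r\cdot)\cong H^d_\fm(\widehat{R}/r\widehat{R})$, and invoke Grothendieck vanishing/non-vanishing together with equidimensionality of complete Cohen--Macaulay rings. Both routes arrive at the same intermediate statement: a nonzero endomorphism, i.e.\ multiplication by a nonzero $r\in\widehat{R}$, is surjective precisely when $r$ is a nonzerodivisor on $\widehat{R}$. Up to that point there is nothing to object to (the identification $\End_{\widehat R}(\omega_{\widehat R})\cong\widehat R$ you worry about is standard for Cohen--Macaulay rings with canonical module, e.g.\ \cite[3.3.4]{BH}).

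The gap is in your final translation. ``Every nonzero element of $\widehat{R}$ is a nonzerodivisor'' is not the statement that $\widehat{R}$ is reduced; it is the statement that $\widehat{R}$ is a domain. Your forward implication survives (a domain is reduced), but the converse you sketch --- that reducedness of $\widehat{R}$ ``excludes the zerodivisors that would obstruct surjectivity'' --- fails whenever $\widehat{R}$ is reduced with more than one minimal prime: for $\widehat{R}=k[[x,y]]/(xy)$ the element $x$ is a nonzero zerodivisor, so by your own cokernel formula multiplication by $x$ is a nonzero, non-surjective endomorphism of $H^1_\fm(\widehat{R})$. The paper's proof in fact establishes the equivalence with ``$\widehat{R}$ is an integral domain'': after completing it explicitly sets out to show $R$ is a domain, and its converse direction assumes $R$ is a domain in order to get injectivity of $\mu_r$ on the torsion-free module $\omega_R$. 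So to close your argument you must replace ``reduced'' by ``domain'' in the last step; as written, the converse direction does not go through.
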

\begin{proof}Recall that any nonzero $f:H^d_\fm(R)\to H^d_\fm(R)$ is surjective. This property behaves well with respect to completion, as it is artinian.
 We may assume  that $R$ is complete, and w3 are going to show that $R$ is an integral domain. The assumptions grantee that the canonical module $\omega_R$ exists. Let $r$ be nonzero. By $\mu_r:\omega_R\to \omega_R$ we mean the multiplication by $r$. Since $$\Hom(\mu_r,E(R/\fm)) :H^d_\fm(R)\stackrel{r}\lo H^d_\fm(R)$$is nonzero, by the assumption it is surjective. The following diagram

 $$
 \begin{CD}
 0@>>> H^d_\fm(R)^{ \vee} @> r >> H^d_\fm(R)^{ \vee}  \\
 @.\cong@AAA\cong@AAA          \\
  @. \omega_R @>\mu_r >> \omega_R , \\
 \end{CD}
 $$
 say that $r$ is $\omega_R$-regular. In other words, $r\notin \Zd(\omega_R):=\bigcup_{\fp\in\Ass(\omega_R)}\fp$. Now, recall that
 \begin{enumerate}
 	\item[i)] $\Ass(\omega_R)=\Min(\omega_R)=\min(\Supp(\omega_R))$,

 	\item[ii)]	$\Supp(\omega_R) =\Spec(R)$.

 \end{enumerate}
We apply this and see$$\begin{array}{ll}
\Ass(\omega_R)&=\min(\Supp(\omega_R))\\
&= \min(\Spec(R))\\
&=\min(R)\\
&=\Ass(R).
\end{array}$$

This in turn implies that $$\begin{array}{ll}
r\notin \Zd(\omega_R)&=\bigcup_{\fp\in\Ass(\omega_R)}\fp\\&=\bigcup_{\fp\in\Ass(R)}\fp\\
&= \Zd(R).
\end{array}$$
In sum, $r$ is $R$-regular, and so $R$ is a domain. Conversely,
suppose $R$ is domain, and look
any nonzero $f:H^d_\fm(R)\to H^d_\fm(R)$.
Then $$f^{ \vee}\in (\Hom_R(H^d_\fm(R)^{ \vee}, H^d_\fm(R)^{ \vee})=\Hom_R(\omega_R,\omega_R)=R.$$
There is a nonzero  $r\in R$ such that $f^{ \vee}$
is a multiplication by $r$. Since $R$ is domain,multiplication by $r$ is one-to-one over $\omega_R$. In particular, $f^{\vee \vee}:H^d_\fm(R)\to H^d_\fm(R)$ is surjective.
Since  $f=f^{\vee \vee}$, the desired claim is now clear.
\end{proof}

\begin{definition}Let $(R,\fm)$ be local.
	We say an $R$-module $E$ is highly divisible if for any system of parameter $x_1,\ldots,x_d$ of $R$, the multiplication maps $$\Ann_E(x_1,\ldots, x_i)\stackrel{x_{i+1}}\lo \Ann_E(x_1,\ldots, x_i)\lo 0$$are surjective.
\end{definition}

The following is the higher dimensional version of \cite[7.1]{E}:

\begin{corollary}
	 Let $(R,\fm)$ be a d-dimensional complete  local Cohen-Macaulay domain.
	 Then $H^d_\fm(R)$ has no nontrivial highly divisible submodule.
\end{corollary}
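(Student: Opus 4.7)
The plan is to use Matlis duality to convert the condition of being a highly divisible submodule into a Cohen--Macaulay condition on the dual, and then to exploit the rigidity of the canonical module to pin down $D$.

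Since $R$ is complete Cohen--Macaulay local of dimension $d$, the top local cohomology $H^d_\fm(R)$ is artinian, so any submodule $D \subseteq H^d_\fm(R)$ is artinian. Applying $(-)^{\vee}$ to $D \hookrightarrow H^d_\fm(R)$ yields a surjection $\omega_R = H^d_\fm(R)^{\vee} \twoheadrightarrow M$, where $M := D^{\vee}$ is finitely generated and Matlis reflexive. Using $\Ann_D(x_1,\ldots,x_i) = \Hom_R(R/(x_1,\ldots,x_i),D)$ together with the tensor--hom adjunction, one computes
$$\Ann_D(x_1,\ldots,x_i)^{\vee} \;\cong\; M/(x_1,\ldots,x_i)M.$$
Because Matlis duality is exact and contravariant, surjectivity of multiplication by $x_{i+1}$ on $\Ann_D(x_1,\ldots,x_i)$ dualizes to injectivity of multiplication by $x_{i+1}$ on $M/(x_1,\ldots,x_i)M$. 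Quantifying over $0\leq i<d$ and over all systems of parameters, the highly divisibility of $D$ is equivalent to the statement that every system of parameters of $R$ is a regular sequence on $M$.

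Assume $D\neq 0$, so $M\neq 0$. Then $\depth_\fm M\geq d$, and combined with $\dim M\leq \dim R=d$ this forces $M$ to be Cohen--Macaulay of dimension $d$. The remaining ingredient is a rank calculation. Since $R$ is a Cohen--Macaulay domain, $\omega_R$ is torsion-free of generic rank one, so $\omega_R\otimes_R Q\cong Q$. Setting $K:=\ker(\omega_R\twoheadrightarrow M)$ and tensoring $0\to K\to \omega_R\to M\to 0$ with the flat module $Q$, we see that $K\otimes_R Q$ is either $0$ or $Q$. If $K\otimes_R Q=Q$ then $M\otimes_R Q=0$, so $(0)\notin\Supp M$ and $\dim M<d$, a contradiction; hence $K\otimes_R Q=0$, so $K$ is a torsion submodule of the torsion-free module $\omega_R$, whence $K=0$ and $M=\omega_R$. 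Dualizing back gives $D=H^d_\fm(R)$. Consequently the only highly divisible submodules of $H^d_\fm(R)$ are $0$ and $H^d_\fm(R)$ itself, so no proper nonzero (i.e.\ nontrivial) highly divisible submodule exists.

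The only delicate point is the dualization in the first paragraph: one must check carefully that multiplication by $x_{i+1}$ on $\Ann_D(x_1,\ldots,x_i)$ corresponds under $(-)^{\vee}$ to multiplication by $x_{i+1}$ on $M/(x_1,\ldots,x_i)M$, and that surjectivity on the artinian side becomes injectivity on the finitely generated side. Everything after this is a short rank count that uses only the torsion-freeness and generic rank one of $\omega_R$ together with the depth inequality $\depth M\leq \dim M$.
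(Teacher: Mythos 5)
Your proof is correct, but it takes a genuinely different route from the paper's. The paper lets $A$ be a nontrivial highly divisible submodule, observes that every $R$-sequence is an $A$-``consequence'' in the sense of Matlis, invokes \cite[Lemma 6.6]{matne} to see that the evaluation map $H^d_\fm(R)\otimes_R\Hom_R(H^d_\fm(R),A)\to A$ is an isomorphism, extracts from $A\neq 0$ a nonzero map $H^d_\fm(R)\to A\subsetneqq H^d_\fm(R)$, and derives a contradiction with Proposition \ref{coh} (every nonzero endomorphism of $H^d_\fm(R)$ is surjective, since a complete domain is analytically unramified). You instead dualize the inclusion $D\hookrightarrow H^d_\fm(R)$ to a surjection $\omega_R\twoheadrightarrow M=D^{\vee}$, translate high divisibility via $\Ann_D(x_1,\ldots,x_i)^{\vee}\cong M/(x_1,\ldots,x_i)M$ into the statement that systems of parameters are $M$-regular, conclude $M$ is Cohen--Macaulay of dimension $d$, and then use that $\omega_R$ is torsion-free of rank one (together with equidimensionality of the complete domain $R$, which guarantees $\dim M<d$ whenever $(0)\notin\Supp M$) to force the kernel of $\omega_R\to M$ to vanish. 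Both arguments are sound; yours is self-contained modulo standard facts about canonical modules, avoids the external appeal to Matlis's consequence machinery and to Proposition \ref{coh}, and yields slightly more --- an exact classification showing the only highly divisible submodules are $0$ and $H^d_\fm(R)$, via the correspondence with full-dimensional Cohen--Macaulay quotients of $\omega_R$ --- whereas the paper's route is shorter given the surjectivity proposition already established in that section and stays closer in spirit to Matlis's original one-dimensional argument. The naturality point you flag (that multiplication by $x_{i+1}$ dualizes to multiplication by $x_{i+1}$) is indeed the one step needing care, and it holds because multiplication maps commute with every natural transformation of $R$-linear functors.
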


\begin{proof}
Let $A$ be a	nontrivial highly divisible submodule of $H^d_\fm(R)$. Here, we use the concept of consequence \cite[Page 97, Definition]{matne}.
Following definition, any $R$-sequence
is $A$-consequence. Since $H^d_\fm(R)$ is artinian,
the module $A$ is as well. These allow us to apply \cite[Lemma 6.6]{matne}
 to conclude that the natural evaluation map $$H^d_\fm(R)\otimes_R\Hom_R(H^d_\fm(R),A) \lo A\quad(\ast),$$is an isomorphism. Since 
$A\neq 0$, and by $(\ast)$ we deduce that
$\Hom(H^d_\fm(R),A) 	\neq 0$. Let $f:H^d_\fm(R)\to A$ be any nonzero map. Now, we look at the following map $$g:=H^d_\fm(R)\stackrel{f}\lo A\stackrel{\subsetneqq}\lo H^d_\fm(R).$$Clearly,
$g$ is nonzero and it is not surjective. This is in contradiction with  Proposition \ref{coh}. So, $A$ should be trivial.
\end{proof}
In \cite[Cor 8]{matcanada} Matlis proved over 1-dimensional domain that
$\Hom_R(Q/B,Q/R)\neq0$ where $B\subsetneqq Q$.
Here is the higher\footnote{Recall from Discussion \ref{dis} that in the 1-dimension case we have $Q/B\cong H^d_\fm(B)$.}:
\begin{theorem}
\label{fff} Let $(R,\fm)$ be Cohen-Macaulay and $B$ be torsionless as an $R$-module. Then $\Hom_R(H^d_\fm(B),H^d_\fm(R))\neq 0$.
\end{theorem}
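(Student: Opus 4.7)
The plan is to construct an explicit nonzero element of $\Hom_R(H^d_\fm(B),H^d_\fm(R))$ as the map $H^d_\fm(\varphi)$ induced by a well-chosen homomorphism $\varphi\colon B\to R$. Assuming $B\neq 0$ (otherwise nothing is to prove), torsionlessness furnishes an embedding of $B$ into a product of copies of $R$, whose composition with a suitable coordinate projection yields $\varphi\in\Hom_R(B,R)$ with $J:=\varphi(B)$ a nonzero ideal of $R$.

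I would reduce at once to the case that $R$ is complete. Both $H^d_\fm(R)$ and $H^d_\fm(J)$ are Artinian (since $R$ and $J$ are finitely generated), hence naturally $\widehat{R}$-modules; moreover, the map $H^d_\fm(R)\to H^d_\fm(R/J)$ is the same map over $R$ and over $\widehat{R}$, so its injectivity may be tested after completion. This allows me to invoke the canonical module $\omega_R=H^d_\fm(R)^\vee$ of a complete Cohen--Macaulay local ring, which is faithful, i.e.\ $\End_R(\omega_R)\cong R$ and so $\Ann_R(\omega_R)=0$.

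Factor $\varphi$ as $B\twoheadrightarrow J\hookrightarrow R$ and apply $H^d_\fm(-)$. The first piece gives a surjection $H^d_\fm(B)\twoheadrightarrow H^d_\fm(J)$ by Grothendieck vanishing $H^{d+1}_\fm=0$. For the second piece, the short exact sequence $0\to J\to R\to R/J\to 0$ together with the Cohen--Macaulay vanishing $H^{d-1}_\fm(R)=0$ yields
$$0\lo H^{d-1}_\fm(R/J)\lo H^d_\fm(J)\lo H^d_\fm(R)\overset{\pi}{\lo} H^d_\fm(R/J)\lo 0,$$
so the image of $H^d_\fm(J)\to H^d_\fm(R)$ coincides with $\ker\pi$. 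It therefore suffices to show that $\pi$ is not injective. Matlis-dualizing via local duality, $\pi^\vee$ becomes the natural inclusion $(0:_{\omega_R}J)=\Hom_R(R/J,\omega_R)\hookrightarrow\Hom_R(R,\omega_R)=\omega_R$. By Matlis reflexivity, $\pi$ is injective iff $\pi^\vee$ is surjective, iff $J\omega_R=0$, iff $J\subseteq\Ann_R(\omega_R)$. Faithfulness of $\omega_R$ then forces $J=0$, contradicting the choice of $\varphi$. Hence $\pi$ is not injective, so the composition $H^d_\fm(\varphi)\colon H^d_\fm(B)\twoheadrightarrow H^d_\fm(J)\to H^d_\fm(R)$ is nonzero, giving the desired element of $\Hom_R(H^d_\fm(B),H^d_\fm(R))$.

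The main obstacle I expect is confirming that the reduction to $\widehat{R}$ and the application of local duality are legitimate in the generality allowed (in particular when $B$ itself need not be finitely generated): one has to verify that the relevant Hom-groups coincide on the Artinian completions, and that faithfulness of the canonical module is preserved upon passing between $R$ and $\widehat{R}$. These are standard once one notes that $J\neq 0$ in $R$ implies $J\widehat{R}\neq 0$ by faithful flatness of completion; everything after that is routine local-duality bookkeeping.
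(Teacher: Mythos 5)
Your argument is correct, but it takes a genuinely different route from the paper's. The paper first rewrites the entire Hom-group: using the right-exactness of top local cohomology to get $H^d_\fm(B)\cong H^d_\fm(R)\otimes_R B$, Hom-tensor adjunction, and $\End_R(H^d_\fm(R))\cong\widehat{R}$, it identifies $\Hom_R(H^d_\fm(B),H^d_\fm(R))$ with $\Hom_R(B,\widehat{R})$, and nonvanishing is then immediate from the torsionless embedding $B\hookrightarrow B^{\ast\ast}\hookrightarrow\prod R\subseteq\prod\widehat{R}$, with no duality argument needed. You instead extract a single nonzero functional $\varphi\colon B\to R$ from torsionlessness and show that $H^d_\fm(\varphi)$ is nonzero, verifying this after completion by combining the long exact sequence for $0\to J\to R\to R/J\to 0$ with local duality and the faithfulness of $\omega_{\widehat{R}}$ --- which is essentially the same underlying fact ($\End(\omega_{\widehat{R}})\cong\widehat{R}$, hence $\Ann(\omega_{\widehat{R}})=0$) that justifies the paper's identification $\End_R(H^d_\fm(R))\cong\widehat{R}$. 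Your proof buys an explicit nonzero element, namely the map induced by a coordinate projection, at the cost of invoking local duality, Matlis duality bookkeeping, and a completion reduction; the paper's proof is shorter and yields strictly more, namely the isomorphism $\Hom_R(H^d_\fm(B),H^d_\fm(R))\cong\Hom_R(B,\widehat{R})$ rather than mere nonvanishing. Both arguments, like the statement itself, implicitly require $B\neq 0$.
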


\begin{proof}
	First, 
	$$\begin{array}{ll}
	\Hom_R(H^d_\fm(B),H^d_\fm(R))&=\Hom_R(H^d_\fm(R)\otimes_RB,H^d_\fm(R))\\
	&= \Hom_R(B,\Hom_R(H^d_\fm(R),H^d_\fm(R))\\
	&=\Hom_R(B,\widehat{R}).
	\end{array}$$
Thus, things are reduced to show $\Hom_R(B,\widehat{R})\neq 0$. Let $\oplus R\lo B^\ast\lo 0$ be a part of free resolution.
Since $B$ is torsionless as an $R$-module, we have$$\begin{array}{ll}
B&\subseteq B^{\ast\ast}\\
&\subseteq \Hom_R(\oplus R,R)\\
&=\prod\Hom_R(R,R)\\
&=\prod R\\
&\subseteq\prod \widehat{R}.
\end{array}$$
Since $$0\neq \Hom_R(B,\prod\widehat{R})\cong \prod \Hom_R(B, \widehat{R}), $$we see $\Hom_R(B,\widehat{R})\neq 0$. The claim follows.
\end{proof}

Also, he proved over 1-dimensional rings that $\Hom_R(D,Q/R)\neq0$ where $D$ is divisible, see \cite[Corollary 8.10]{E}. Here, is the higher dimensional version:

\begin{proposition}
Let $(R,\fm)$ be a complete  Cohen-Macaulay domain and $D$ be nonzero and divisible as an $R$-module. Then $\Hom_R(D,H^d_\fm(R))\neq 0$.
\end{proposition}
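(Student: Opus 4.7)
The plan is to reduce everything to a tensor-product nonvanishing, using local duality to replace $H^d_\fm(R)$ by (the dual of) the canonical module $\omega_R$, and then to exploit divisibility against any nonzero ideal of $R$.

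First I would invoke local duality. Since $R$ is a complete local Cohen-Macaulay ring, it has a canonical module $\omega_R$, and $H^d_\fm(R)\cong\omega_R^{\vee}$. Tensor-hom adjunction then gives
\[
\Hom_R(D,H^d_\fm(R))\;\cong\;\Hom_R(D,\omega_R^{\vee})\;\cong\;\Hom_R(D\otimes_R\omega_R,E(R/\fm)).
\]
Because $E(R/\fm)$ is a cogenerator, this last Hom is nonzero iff $D\otimes_R\omega_R\neq 0$. So the whole problem is reduced to showing $D\otimes_R\omega_R\neq 0$.

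Next I would use that $R$ is a domain. Then $R$ is generically Gorenstein (the localisation at $(0)$ is the field $Q$), so $\omega_R$ is torsion-free of rank one and embeds into $Q$; clearing denominators produces an isomorphism $\omega_R\cong J$ with $J$ a nonzero ideal of $R$. Therefore it suffices to prove $D\otimes_R J\neq 0$.

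For this final step, pick any $0\neq j\in J$. Divisibility of $D$ yields $jD=D$, so in particular $JD=D$. Tensoring the short exact sequence $0\to J\to R\to R/J\to 0$ with $D$ produces the right-exact sequence
\[
D\otimes_R J\lo D\lo D/JD\lo 0,
\]
whose last term is zero. Hence $D\otimes_R J\twoheadrightarrow D$, and since $D\neq 0$ we conclude $D\otimes_R J\neq 0$, as required. The only place where one needs to be slightly careful is the embedding of $\omega_R$ as an ideal; this is why the domain hypothesis is used, but no real obstacle arises once generic Gorenstein-ness is invoked.
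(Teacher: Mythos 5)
Your proof is correct, and its first half (local duality $H^d_\fm(R)\cong\omega_R^{\vee}$ followed by tensor--hom adjunction, reducing everything to the canonical module) coincides with the paper's. Where you diverge is in the finishing move. The paper applies adjunction once more to rewrite the group as $\Hom_R(\omega_R,D^{\vee})$, invokes the fact that the Matlis dual of a divisible module is torsion-free, and concludes by composing the embeddings $\omega_R\hookrightarrow R\hookrightarrow D^{\vee}$ (the last one given by $r\mapsto rn$ for any $0\neq n\in D^{\vee}$). You instead stay on the tensor side: after noting that $E(R/\fm)$ is a cogenerator, you reduce to $D\otimes_R\omega_R\neq 0$, identify $\omega_R$ with a nonzero ideal $J$ via generic Gorensteinness, and use divisibility to get $JD=D$, so that right-exactness of $-\otimes_R D$ forces $D\otimes_R J\twoheadrightarrow D\neq 0$. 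Your route is somewhat more elementary and self-contained --- it avoids the ``dual of divisible is torsion-free'' input and makes explicit the cogenerator step that the paper leaves implicit --- while the paper's version is a one-line dualization once that duality fact is granted. Both arguments use the same essential geometric input, namely that $\omega_R$ embeds into $R$ because the domain is generically Gorenstein.
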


\begin{proof}
Recall from Matlis duality, dual of divisible is torsion-free. 	We have $$\begin{array}{ll}
\Hom_R(D,H^d_\fm(R))&=	\Hom_R(D,\omega_R^{ \vee})\\&= \Hom_R(D\otimes_R\omega_R,E_R(R/\fm))\\
	&=\Hom_R(\omega_R\otimes_R D,E_R(R/\fm))\\
	&=\Hom_R(\omega_R,D^{ \vee})
	\\
	&=
	\Hom_R(\omega_R,\emph{torsion-free}).
	\end{array}$$Since $R$ is generically Gorenstein,
	$$\omega_R\stackrel{\subseteq}\lo R\stackrel{\subseteq}\lo \emph{torsion-free}.$$
	The claim is now clear.
\end{proof}
Here, we present a sample that a Matlis result can not  extend to higher. First, recall:

\begin{fact}
	(Matlis, \cite[Corollary 8.10]{E})
	Let $(R,\fm)$ be a 1-dimensional domain and $B\varsubsetneq Q$. Then $\Hom_R(B, \widehat{R})\neq 0$.
\end{fact}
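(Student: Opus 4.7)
My plan is to reduce the statement to Matlis's prior result $\Hom_R(Q/B,Q/R)\neq 0$ for $B\subsetneq Q$ (recalled in the paragraph preceding Theorem~\ref{fff} as \cite[Cor 8]{matcanada}). The reduction combines hom-tensor adjunction with the 1-dimensional identification $\widehat R\cong\End_R(Q/R)$.

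First I would record the identification $\widehat R\cong \Hom_R(Q/R,Q/R)$. Applying $\Hom_R(Q/R,-)$ to the sequence $0\to R\to Q\to Q/R\to 0$ and noting that $\Hom_R(Q/R,Q)=0$ (torsion into torsion-free) and $\Ext^1_R(Q/R,Q)=0$ (as $Q$ is injective over the domain $R$) produces the isomorphism $\Hom_R(Q/R,Q/R)\cong \Ext^1_R(Q/R,R)$. Jensen's theorem, used already in the proof of Proposition~\ref{12} in the form $\widehat N\cong \Ext^1_R(Q/R,N)$ for finitely generated $N$, applied at $N=R$ gives $\widehat R\cong\Ext^1_R(Q/R,R)$; composing the two yields the desired identification.

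Next, hom-tensor adjunction gives
$$
\Hom_R(B,\widehat R)\cong\Hom_R(B,\Hom_R(Q/R,Q/R))\cong\Hom_R(B\otimes_R Q/R,\,Q/R).
$$
To compute $B\otimes_R Q/R$, I would tensor $0\to R\to Q\to Q/R\to 0$ with $B$. Since $B\subseteq Q$ is torsion-free of rank one, $B\otimes_R Q\cong Q$, and the map $B\otimes_R R=B\to B\otimes_R Q=Q$ is identified with the inclusion $B\hookrightarrow Q$. Its injectivity forces $\Tor^R_1(B,Q/R)=0$, and hence $B\otimes_R Q/R\cong Q/B$.

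Finally, since $B\subsetneq Q$ we have $Q/B\neq 0$, so \cite[Cor 8]{matcanada} yields $\Hom_R(Q/B,Q/R)\neq 0$; chaining the isomorphisms delivers $\Hom_R(B,\widehat R)\neq 0$. The main point that needs care is the identification $\widehat R\cong \Hom_R(Q/R,Q/R)$ without assuming $R$ is complete: this is the one genuinely non-formal ingredient, and it hinges on Jensen's isomorphism being valid for $N=R$ over an arbitrary 1-dimensional local domain.
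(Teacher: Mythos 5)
Your argument is correct, but note that the paper does not prove this Fact at all: it is quoted verbatim from Matlis \cite[Corollary 8.10]{E} and used as a black box (e.g.\ in the implication $ii)\Rightarrow i)$ of Theorem \ref{nex}). So there is no in-paper proof to match; what you have produced is a self-contained derivation from the \emph{other} Matlis result the paper recalls, namely $\Hom_R(Q/B,Q/R)\neq 0$ from \cite[Cor 8]{matcanada}. Your route is essentially the one-dimensional specialization of the adjunction chain that the paper runs in the proof of Theorem \ref{fff},
$$\Hom_R\bigl(H^d_\fm(B),H^d_\fm(R)\bigr)\cong \Hom_R\bigl(B,\End_R(H^d_\fm(R))\bigr)\cong\Hom_R(B,\widehat R),$$
but traversed in the opposite direction: the paper uses this chain to reduce the local-cohomology statement to $\Hom_R(B,\widehat R)\neq0$ and then proves the latter directly by embedding a torsionless $B$ into $\prod\widehat R$, whereas you use it (with $H^1_\fm(R)\cong Q/R$, $B\otimes_RQ/R\cong Q/B$ as in Discussion \ref{dis}) to reduce $\Hom_R(B,\widehat R)\neq0$ to Matlis's nonvanishing of $\Hom_R(Q/B,Q/R)$. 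Each ingredient you flag is legitimately available: $\End_R(Q/R)\cong\Ext^1_R(Q/R,R)\cong\overset{\sim}R$ is Matlis's fundamental identification (the sequence \cite[5.2]{mat2} used in Proposition \ref{43}), and $\overset{\sim}R=\widehat R$ in dimension one because the $R$-topology and the $\fm$-adic topology coincide there, as the paper itself notes in the proofs of Proposition \ref{18} and Corollary \ref{20}; so you do not need the full strength of Jensen's theorem for finitely generated modules, only the case $N=R$, which is already in \cite{mat2}. The only point worth adding is the (implicit in Matlis) hypothesis that $B$ is a \emph{nonzero} $R$-submodule of $Q$, without which the statement is vacuously false; your step $B\otimes_RQ\cong Q$ silently uses it.
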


How can extend this to higher? Here, is the answer:

\begin{theorem}\label{nex}
	Let 	Let $(R,\fm)$ be a d-dimensional domain which is not a field. The following are equivalent:
	\begin{enumerate}
		\item[i)] $\Hom_R(B, \widehat{R})\neq 0$ for all $B\varsubsetneq Q$.
	 \item[ii)]
		  $d=1$.  	\end{enumerate}
	
\end{theorem}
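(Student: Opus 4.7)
The implication $\mathrm{ii})\Rightarrow\mathrm{i})$ is precisely the Fact of Matlis recalled just before the statement, so the whole content sits in $\mathrm{i})\Rightarrow\mathrm{ii})$. I would prove this by contrapositive: assuming $d\geq 2$, I plan to exhibit a concrete proper submodule $B\subsetneq Q$ with $\Hom_R(B,\widehat R)=0$. The natural candidate is a one-step localization $B:=R_f$ for a carefully chosen $0\neq f\in\fm$.

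To guarantee $R_f\subsetneq Q$ I use $d\geq 2$: pick a height-one prime $\fp$ of $R$ (such a $\fp$ exists inside any saturated chain $0\subsetneq\fp\subsetneq\fm$), and then choose $f\in\fm\setminus\fp$. Because $f\notin\fp$, the ideal $\fp R_f$ is a nonzero prime of $R_f$, so $\dim R_f\geq 1$; thus $R\subseteq R_f\subsetneq Q$ is a proper intermediate $R$-submodule of $Q$.

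It remains to check $\Hom_R(R_f,\widehat R)=0$. Given any $\phi\colon R_f\to\widehat R$, the $R$-linearity combined with $f^k\cdot(1/f^{n+k})=1/f^n$ in $R_f$ yields
$$\phi(1/f^n)=f^k\,\phi(1/f^{n+k})\in f^k\widehat R\quad\text{for every }k\geq 0.$$
Hence $\phi(1/f^n)\in\bigcap_{k\geq 0}f^k\widehat R$. Since $\widehat R$ is noetherian local with $f\in\fm\widehat R$ lying in its Jacobson radical, Krull's intersection theorem (applied to the finitely generated $\widehat R$-module $\widehat R$) forces $\bigcap_{k\geq 0}f^k\widehat R=0$. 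Thus $\phi(1/f^n)=0$ for every $n$, and then $\phi(a/f^n)=a\,\phi(1/f^n)=0$ for all $a\in R$; so $\phi=0$.

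The only real obstacle is choosing $f$ so that $R_f$ stays strictly inside $Q$, and this is exactly the point where the argument fails in dimension one: for a one-dimensional local domain every nonzero $f\in\fm$ already satisfies $R_f=Q$, so no counterexample $B$ can be manufactured by a single inversion, in agreement with Matlis' Fact.
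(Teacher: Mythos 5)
Your proof is correct and follows essentially the same route as the paper: both reduce $\mathrm{i)}\Rightarrow\mathrm{ii)}$ to showing $\Hom_R(R_f,\widehat R)=0$ for a suitable $0\neq f\in\fm$ when $d\geq 2$, exploiting that any element in the image of such a map would be infinitely divisible by $f$ inside the noetherian ring $\widehat R$. Your appeal to Krull's intersection theorem ($\bigcap_{k}f^k\widehat R=0$) is a clean way to finish; the paper instead builds an ascending chain $(\alpha)\subseteq(\alpha,\alpha/f)\subseteq\cdots$ and leaves its strictness to the reader, so your version in effect supplies that missing detail.
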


\begin{proof}
$i) \Rightarrow ii)$: If not, then $d>1$, and look at $B:=R_x$ for some $0\neq x\in \fm$. It is easy to see that $B\varsubsetneq Q$, because $d>1$. By assumption, there is a nonzero $f\in\Hom_R(B, \widehat{R})$. Let $I:=\im(f)$, and let $\alpha\in I$ be nonzero. There are $r\in R$ and $n\in\mathbb{N}$ such that $\alpha=f(r/x^n)$.	In view of: $$xf(r/x^{n+1})=f(rx/x^{n+1})=f(r/x^n)=\alpha,$$we see $\alpha/x\in I$. Now, we look at 
$$(\alpha,\alpha/x,\alpha/x^2,\ldots ).$$
We leave  to the reader to see that $$\alpha/x^{n+1} \notin(\alpha,\alpha/x,\ldots,\alpha/x^n ).$$	Since $\widehat{R}$ is noetherian we get to a contradiction. 

$ii) \Rightarrow  i)$: This is in \cite[Corollary 8.10]{E}.
\end{proof}

\section{Grothendeick's localization problem}
Rings in this section are not necessarily noetherian.
  By $\gdim(R)$ we mean the global dimension.
	Matlis proved that $\gdim(R) \leq \gdim( \overset{\sim} R)$, and with equality in a special case.
For simplicity, let us recall non-noetherian grade:

\begin{definition} Let $\fa$ be an ideal of a ring $R$ and $M$ an
	$R$-module. Take $\Sigma$ be the family of all finitely generated
	subideals $\fb$ of $\fa$. Here, $\inf$ and $\sup$ are formed in
	$\mathbb{Z} \cup \{\pm\infty\}$ with the convention that $\inf
	\emptyset=+ \infty$ and $\sup \emptyset=- \infty$.
	
	(i) In order to give the definition of Koszul grade when $\fa$ is
	finitely generated  by a generating set
	$\underline{x}:=x_{1},\cdots, x_{r}$, we first denote the Koszul
	complex related to $\underline{x}$ by
	$\mathbb{K}_{\bullet}(\underline{x})$. Koszul grade of $\fa$ on $M$
	is defined by
	$$\Kgrade_R(\fa,M):=\inf\{i \in\mathbb{N}\cup\{0\} | H^{i}(\Hom_R(
	\mathbb{K}_{\bullet}(\underline{x}), M)) \neq0\}.$$ Note that by
	\cite[Corollary 1.6.22]{BH} and \cite[Proposition 1.6.10 (d)]{BH},
	this does not depend on the choice of generating sets of $\fa$. For
	an ideal $\fa$ (not necessarily finitely generated), Koszul grade of
	$\fa$ on $M$ can be defined by
	$$\Kgrade_R(\fa,M):=\sup\{\Kgrade_R(\fb,M):\fb\in\Sigma\}.$$ By using
	\cite[Proposition 9.1.2 (f)]{BH}, this definition coincides with the
	original definition for finitely generated ideals.
	
	(ii) A finite sequence $\underline{x}:=x_{1},\cdots,x_{r}$ of
	elements of $R$ is called weak regular sequence on $M$ if $x_i$ is a
	nonzero-divisor on $M/(x_1,\cdots, x_{i-1})M$ for $i=1,\cdots,r$. If
	in addition $M\neq (\underline{x})M$, $\underline{x}$ is called
	regular sequence on $M$. The classical grade  of $\fa$ on $M$,
	denoted by $\grade_R(\fa,M)$, is defined to the supremum of the
	lengths of all weak regular sequences on $M$ contained in $\fa$.

	(iii) The Hochster's polynomial grade of $\fa$ on M is
	defined by
	$$\pgrade_R(\fa,M):=\underset{m\rightarrow\infty}{\lim}
	\grade_{R[t_1, \cdots,t_m]}(\fa R[t_1, \cdots,t_m],R[t_1,,
	\cdots,t_m]\otimes_R M).$$

\end{definition}

\begin{definition} $(A,\fm)$ is called Cohen-Macaulay if $\pgrade(\fm,A)=\dim(A)$ where $A$ is any commutative local ring.\end{definition}

\begin{proposition}\label{39}
Let $(R,\frak{m})$  be a  local domain. If $\overset{\sim} R$ is Cohen-Macaulay,  then $R$ is Cohen-Macaulay.
\end{proposition}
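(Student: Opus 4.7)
The plan is to deduce the polynomial-grade equality $\pgrade(\fm,R)=\dim R$ from the analogous equality on $\overset{\sim}R$ via the faithful flatness of $R\to\overset{\sim}R$. Recall from Matlis' work that $R\to\overset{\sim}R$ is flat, and since $R$ is a local domain and $\overset{\sim}R$ is local with its maximal ideal lying over $\fm$, the extension is in fact faithfully flat.

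The key computational input is that Koszul grade is invariant under faithfully flat extensions. Given any finite sequence $\ux=x_1,\ldots,x_r$ in $\fm$, the Koszul complex $\mathbb{K}_\bullet(\ux)$ consists of finitely generated free modules, so $\Hom$ and cohomology commute with the flat base change, giving
$$H^i(\Hom_R(\mathbb{K}_\bullet(\ux),R))\otimes_R\overset{\sim}R \;\cong\; H^i(\Hom_{\overset{\sim}R}(\mathbb{K}_\bullet(\ux),\overset{\sim}R)).$$
By faithful flatness these modules vanish simultaneously, so $\Kgrade_R(\ux R,R)=\Kgrade_{\overset{\sim}R}(\ux\overset{\sim}R,\overset{\sim}R)$. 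Taking the supremum over all finitely generated subideals of $\fm$, and replaying the same argument after adjoining variables $t_1,\ldots,t_m$ (which commutes with $-\otimes_R\overset{\sim}R$), we obtain
$$\pgrade_R(\fm,R) \;=\; \pgrade_{\overset{\sim}R}(\fm\overset{\sim}R,\overset{\sim}R).$$

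Next, I would use faithful flatness once more to deduce $\dim R\leq \dim \overset{\sim}R$: given any chain of primes $\fp_0\subsetneq\cdots\subsetneq\fp_n$ in $R$, lying-over together with going-down allows one to recursively lift it to a chain of the same length in $\overset{\sim}R$. Combined with the standard upper bound $\pgrade_R(\fm,R)\leq \Ht(\fm)=\dim R$ and the Cohen-Macaulay hypothesis $\pgrade_{\overset{\sim}R}(\fm\overset{\sim}R,\overset{\sim}R)=\dim\overset{\sim}R$, this yields the sandwich
$$\dim R\;\geq\;\pgrade_R(\fm,R)\;=\;\pgrade_{\overset{\sim}R}(\fm\overset{\sim}R,\overset{\sim}R)\;=\;\dim\overset{\sim}R\;\geq\;\dim R,$$
whence $\pgrade_R(\fm,R)=\dim R$, as required.

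The main obstacle will be justifying flat base change of polynomial grade rigorously in the non-noetherian setting: $\pgrade$ is defined via a limit over adjoining variables and a supremum over finitely generated subideals, so one has to verify that these operations commute with the base change $-\otimes_R\overset{\sim}R$. A secondary subtlety is confirming that $\overset{\sim}R$ is genuinely local (so that faithful flatness is available), which should follow from Matlis' structural results on the $R$-adic completion of a local domain.
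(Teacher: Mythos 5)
Your proposal is correct and takes essentially the same route as the paper: both arguments reduce to the sandwich $\dim R \geq \pgrade(\fm,R) = \pgrade(\fm\overset{\sim}R,\overset{\sim}R) = \dim\overset{\sim}R \geq \dim R$, using flat base change for the middle equality and a dimension comparison for the last inequality. The only cosmetic difference is that the paper gets $\dim\overset{\sim}R\geq\dim R$ by lifting chains of primes via Matlis's result that $\fp\overset{\sim}R$ is prime for every prime $\fp$ of $R$, whereas you invoke lying-over and going-down for the faithfully flat map $R\to\overset{\sim}R$.
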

\begin{proof}
Recall that $\overset{\sim} R = \End_R (Q/R) $ is commutative local ring with a unique maximal ideal $\fm \overset{\sim} R $. Also $R \overset{\varphi}{\longrightarrow} \overset{\sim} R $ is faithfully flat where $\varphi$ defined as $\varphi (r) \in \overset{\sim} R  = \End_R(Q/R)$.
By \cite[5.10.4]{mat2}, If $p \subseteq R $ is prime then $p \overset{\sim} R$ is prime in $\overset{\sim} R$.
So if
$$  P_{1} \subsetneqq P_{2} \subsetneqq \ldots \subsetneqq P_{t} \subseteq R$$
is chain of prime, then
$$  P_{1}\overset{\sim} R \subsetneqq P_{2}\overset{\sim} R \subsetneqq ... \subsetneqq P_{t} \overset{\sim} R \subseteq \overset{\sim} R$$
is chain of prime in $\overset{\sim} R$. In other words
 $\dim(\overset{\sim} R) \geq \dim(R).$
Now
$$\dim(R) \geq \pgrade(\fm,R) =\pgrade(\fm\overset{\sim} R, \overset{\sim} R) =\dim(\overset{\sim} R) \geq \dim(R).$$
So, $R$ is Cohen-Macaulay.
\end{proof}

\begin{proposition}\label{40}
Let $R$ be an integral domain such that $ \pgrade (I ,\overset{\sim} R) = \Ht( I )$  for all ideals of $I \triangleleft \overset{\sim} R$. Then $\pgrade (J,R) =\Ht(J) $ for all $J \triangleleft R$.
\end{proposition}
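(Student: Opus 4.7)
The strategy is the classical sandwich: bound $\operatorname{ht}(J)$ above by $\pgrade(J,R)$, transport polynomial grade across the faithfully flat map $R\to \widetilde{R}$, feed the hypothesis, and then bound $\operatorname{ht}(J\widetilde{R})$ below by $\operatorname{ht}(J)$. Concretely, I plan to prove the chain
\[
\operatorname{ht}(J)\ \geq\ \pgrade(J,R)\ =\ \pgrade(J\widetilde{R},\widetilde{R})\ =\ \operatorname{ht}(J\widetilde{R})\ \geq\ \operatorname{ht}(J),
\]
whose equalities at once force $\pgrade(J,R)=\operatorname{ht}(J)$.

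The first inequality $\operatorname{ht}(J)\geq \pgrade(J,R)$ is the standard bound for polynomial grade: for any prime $\fp$ minimal over $J$, grade behaves well under localization and $\pgrade(JR_\fp,R_\fp)\leq \operatorname{ht}(\fp R_\fp)=\operatorname{ht}(\fp)$, and taking the infimum over such $\fp$ yields $\operatorname{ht}(J)$. The last inequality is the key use of faithful flatness of $R\to \widetilde{R}$: given any $\fq\in\Spec(\widetilde{R})$ with $\fq\supseteq J\widetilde{R}$, the contraction $\fp:=\fq\cap R$ contains $J$, so $\operatorname{ht}(\fp)\geq\operatorname{ht}(J)$; by going-down, any saturated chain $\fp_0\subsetneq\cdots\subsetneq \fp_n=\fp$ in $R$ lifts to a chain $\fq_0\subsetneq\cdots\subsetneq \fq_n=\fq$ in $\widetilde{R}$, which shows $\operatorname{ht}(\fq)\geq \operatorname{ht}(\fp)\geq \operatorname{ht}(J)$; infimizing over $\fq$ gives the claim. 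Recall that the needed going-down (and in fact incomparability of the lifted primes) is automatic for the faithfully flat extension $R\to \widetilde{R}$, and Matlis' observation already invoked in Proposition \ref{39} via \cite[5.10.4]{mat2} can be re-used here.

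The central step, and the only one that is not purely formal, is the middle equality $\pgrade(J,R)=\pgrade(J\widetilde{R},\widetilde{R})$. The plan is to reduce it to the analogous equality for Koszul grade on finitely generated subideals, and then sup over $\Sigma$. For a finite sequence $\underline{x}=x_1,\dots,x_r\in R$, the Koszul complex behaves well under base change:
\[
\mathbb{K}_{\bullet}(\underline{x})\otimes_R\widetilde{R}\ =\ \mathbb{K}_{\bullet}(\underline{x}\widetilde{R}),
\]
and flatness of $\widetilde{R}$ together with the fact that $\Hom_R(\mathbb{K}_\bullet(\underline{x}),R)\otimes_R\widetilde{R}\cong \Hom_{\widetilde{R}}(\mathbb{K}_\bullet(\underline{x}\widetilde{R}),\widetilde{R})$ (the Koszul complex is a bounded complex of finitely generated free modules) gives
\[
H^{i}(\Hom_R(\mathbb{K}_\bullet(\underline{x}),R))\otimes_R\widetilde{R}\ \cong\ H^{i}(\Hom_{\widetilde{R}}(\mathbb{K}_\bullet(\underline{x}\widetilde{R}),\widetilde{R})).
\]
Faithful flatness detects vanishing, so the least $i$ with nonzero cohomology is the same on both sides, that is $\Kgrade_R((\underline{x}),R)=\Kgrade_{\widetilde{R}}((\underline{x})\widetilde{R},\widetilde{R})$. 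Taking the supremum over finitely generated subideals $\fb\subseteq J$ (noting that $\fb\widetilde{R}$ ranges over finitely generated subideals of $J\widetilde{R}$) and invoking the agreement $\pgrade=\Kgrade$ yields $\pgrade(J,R)=\pgrade(J\widetilde{R},\widetilde{R})$. The main technical hurdle I anticipate is precisely making this cofinality argument clean: one must observe that every finitely generated subideal of $J\widetilde{R}$ is contained in $\fb\widetilde{R}$ for some finitely generated $\fb\subseteq J$, which follows because the generators of such a subideal are finite $\widetilde{R}$-combinations of elements of $J$. Once this is set up, plugging everything into the sandwich completes the proof.
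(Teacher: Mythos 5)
Your proof is correct and follows essentially the same route as the paper: both arguments sandwich $\pgrade(J,R)$ between $\Ht(J)$ and the corresponding quantities over $\overset{\sim}{R}$, using faithful flatness of $R\to\overset{\sim}{R}$ and the lifting of prime chains to get $\Ht(J)\leq\Ht(J\overset{\sim}{R})$, and then feeding in the hypothesis. The only substantive difference is that you explicitly establish the base-change identity $\pgrade(J,R)=\pgrade(J\overset{\sim}{R},\overset{\sim}{R})$ via Koszul complexes and a cofinality argument, whereas the paper (here, as in Propositions \ref{39} and \ref{41}) invokes this invariance tacitly and instead routes through a single prime $P$ with $\pgrade(J,R)=\pgrade(P,R)$ supplied by \cite[Proposition 2.2(vi)]{AT}; your version fills in the step the paper leaves implicit.
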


\begin{proof}
Let $J$ be an ideal of $R$. In view of \cite[Proposistion 2.2(vi)]{AT} there exists $P \in \Spec(R)$ such that $\pgrade (J,R)= \pgrade (P,R)$.
Let
$$ 0 \subsetneqq P_{0} \subsetneqq P_{1} \subsetneqq \ldots \subsetneqq P_{t} \subseteq P$$
be a chain of primes. According to \cite[5.10.4]{mat2} the following chain
$$ 0 \subsetneqq P_{0}\overset{\sim} R \subsetneqq P_{1}\overset{\sim} R \subsetneqq \ldots \subsetneqq P_{t}\overset{\sim} R \subseteq P\overset{\sim} R$$
is a chain of prime ideals.
By \cite[5.10]{mat2}, $P_{0}\overset{\sim} R$ is not minimal (as $\overset{\sim} R/P_{0}\cong R/P_0$ is not field). There exists $Q \subsetneqq P_{0}\overset{\sim} R$ such that $\Ht(P) \leq \Ht(P\overset{\sim} R)$ (the ideal $Q$ may be zero or not).
$$\begin{array}{ll}
\pgrade (P,R) &\leq \Ht(P) \\
&\leq \Ht(P\overset{\sim} R)\\
&=\pgrade(P\overset{\sim} R,\overset{\sim} R).\\
\end{array}$$
So $\pgrade (P,R) = \Ht(P)$.
Recall that
$$\begin{array}{ll}
\Ht(J) &\leq \Ht(P) \\
&\leq \Ht(P\overset{\sim} R)\\
&=\pgrade(P\overset{\sim} R,\overset{\sim} R).\\
\end{array}$$
Then $\Ht(J) = \pgrade (J)$.\\
\end{proof}

 Recall that Matlis proved $\gdim(R) \leq \gdim(\overset{\sim} R)$. This may be not sharp. He introduced closed rings and showed that in this case the equality holds \cite[Thm 4.6]{d}, in particular over closed rings, we have $R$ is regular if and only if $\overset{\sim} R$ is regular. Here is the Cohen-Macaulay version:

\begin{proposition}\label{41}
Let $(R,\fm)$ be a closed local domain. Then $R$ is Cohen-Macaulay if and only if $\overset{\sim} R$ is Cohen-Macaulay.
\end{proposition}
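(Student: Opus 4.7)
The plan is to handle the two directions separately. The direction ``$\overset{\sim} R$ Cohen-Macaulay $\Rightarrow$ $R$ Cohen-Macaulay'' is already contained in Proposition~\ref{39} and, in fact, does not require the closedness hypothesis. So the real content is the forward implication ``$R$ Cohen-Macaulay $\Rightarrow$ $\overset{\sim} R$ Cohen-Macaulay'', and this is where closedness enters.

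First I would recycle the two ingredients from the proof of Proposition~\ref{39} that hold without any closedness assumption: (a) $\overset{\sim} R$ is local with unique maximal ideal $\fm\overset{\sim} R$ and the map $R\to \overset{\sim} R$ is faithfully flat, and (b) $\pgrade(\fm,R)=\pgrade(\fm\overset{\sim} R,\overset{\sim} R)$. Assuming $R$ is Cohen-Macaulay, combining (a) and (b) gives
\[
\pgrade(\fm\overset{\sim} R,\overset{\sim} R)=\pgrade(\fm,R)=\dim(R).
\]
So to conclude that $\overset{\sim} R$ is Cohen-Macaulay it suffices to produce the dimensional equality $\dim(\overset{\sim} R)=\dim(R)$.

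The inequality $\dim(\overset{\sim} R)\ge\dim(R)$ is already established inside the proof of Proposition~\ref{39}, using that $\fp\mapsto \fp\overset{\sim} R$ sends primes to primes via \cite[5.10.4]{mat2}. For the reverse inequality I would invoke Matlis' closed-ring theory (the same framework that drives \cite[Thm 4.6]{d}): under the closedness hypothesis every prime of $\overset{\sim} R$ is extended from $R$, so contraction along $R\hookrightarrow \overset{\sim} R$ sends any saturated chain $Q_0\subsetneqq\cdots\subsetneqq Q_n$ of primes in $\overset{\sim} R$ to a strict chain $Q_0\cap R\subsetneqq\cdots\subsetneqq Q_n\cap R$ in $R$, yielding $\dim(\overset{\sim} R)\le\dim(R)$. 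Together with the opposite inequality this forces $\dim(\overset{\sim} R)=\dim(R)$, and we conclude $\pgrade(\fm\overset{\sim} R,\overset{\sim} R)=\dim(\overset{\sim} R)$, i.e., $\overset{\sim} R$ is Cohen-Macaulay.

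The main obstacle is pinning down exactly the consequence of ``closed'' that we need: namely, that closedness ensures the extension/contraction correspondence for prime ideals between $R$ and $\overset{\sim} R$ behaves like a bijection on Spec, so that chains do not lengthen under $R\to\overset{\sim} R$. Once this prime correspondence is in hand, the proposition follows just by feeding $\dim(\overset{\sim} R)=\dim(R)$ into the grade identity transported through the faithfully flat extension, and the Gorenstein analogue advertised in the introduction would follow by the same dimension--grade bookkeeping together with the type-preservation properties of faithfully flat local maps.
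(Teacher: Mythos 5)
Your proposal matches the paper's argument: the converse is delegated to Proposition~\ref{39}, and the forward direction is obtained by transporting $\pgrade(\fm,R)=\pgrade(\fm\overset{\sim}R,\overset{\sim}R)$ across the faithfully flat map and proving $\dim(\overset{\sim}R)=\dim(R)$ via the fact (Matlis, \cite[Thm 4.5]{d}) that closedness forces every prime of $\overset{\sim}R$ to be extended from $R$, so contracted chains stay strict. The only cosmetic difference is that the paper also disposes of the $\dim(R)=\infty$ case explicitly before running the chain argument.
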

\begin{proof}
Suppose $R$ is Cohen-Macaulay, then
$\pdepth (R) = \dim(R)$.
Since $\pdepth (\fm, R) = \pdepth (\fm\overset{\sim} R, \overset{\sim} R)$ and
$$\pdepth (\fm, R) = \dim(R) \leq \dim \overset{\sim} R.$$
If $\dim (R) = \infty $, then $\dim (\overset{\sim} R) = \infty $ and $\pdepth (\fm\overset{\sim} R)= \infty $. Then $\overset{\sim} R$ is Cohen-Macaulay.
Suppose $\dim (R) <\infty $.
Let
$$P_{0} \subsetneqq P_{1} \subsetneqq P_{2} \subsetneqq P_{3} \subsetneqq \ldots \subsetneqq P \subseteq \overset{\sim} R .$$
It is easy to see $Q_i: = P_i \cap R $ is a prime in $R$. In fact
$$Q_{0} \subsetneqq Q_{1} \subsetneqq Q_{2} \subsetneqq Q_{3} \subsetneqq \ldots \subsetneqq Q_l \subseteq \overset{\sim} R, $$
is a chain of prime ideal.
Suppose on the way of contradiction that $Q_{i} = Q_{j}$ for some $i,j$ where $i<j$.
In the light of \cite[ Thm 4.5]{d}, $Q_{i}\overset{\sim} R = P_{i}$ and $Q_{j}\overset{\sim} R = P_{j}$, so $P_{i} = P_{j}$.
But $P_{i} \subsetneqq P_{j}$. This contradiction shows that
$$Q_{0} \subsetneqq Q_{1} \subsetneqq \ldots \subsetneqq Q_{l},$$
so $\dim (R) = l = \dim(\overset{\sim} R) $.
Now recall
$$\begin{array}{ll}
\dim(R) &= \dim(\overset{\sim} R) \\
&= \pdepth(\fm\overset{\sim} R, \overset{\sim} R)\\
&=\pdepth (\fm,R).\\
\end{array}$$
By definition, $R$ is Cohen-Macaulay.
\end{proof}
The above proof shows:
\begin{corollary}\label{42}
If $R$ is closed, then $\dim(R)= \dim( \overset{\sim} R)$.
\end{corollary}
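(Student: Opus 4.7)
The plan is to extract the two inequalities $\dim(R) \leq \dim(\overset{\sim}R)$ and $\dim(R) \geq \dim(\overset{\sim}R)$ directly from the material already assembled in the proof of Proposition \ref{41}; in fact both halves are present there, and the task is mostly to isolate them.

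For $\dim(\overset{\sim}R) \geq \dim(R)$, I would note that this direction does not use closedness. Given any chain of primes $P_0 \subsetneqq \cdots \subsetneqq P_t$ in $R$, the extension map $P_i \mapsto P_i\overset{\sim}R$ yields a strict chain of primes in $\overset{\sim}R$ by \cite[5.10.4]{mat2}. This is precisely the argument already used in Proposition \ref{39} and re-used inside the proof of Proposition \ref{41}.

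For the reverse inequality, where the closedness hypothesis genuinely enters, I would take an arbitrary chain of primes $P_0 \subsetneqq P_1 \subsetneqq \cdots \subsetneqq P_l$ in $\overset{\sim}R$, contract to $Q_i := P_i \cap R$, and argue that the $Q_i$ form a strict chain in $R$. The key step is that closedness, via \cite[Thm 4.5]{d}, forces $Q_i\overset{\sim}R = P_i$, so any collapse $Q_i = Q_j$ would propagate to $P_i = P_j$, contradicting the strictness of the original chain. Therefore $\dim(R) \geq l$, and taking the supremum gives $\dim(R) \geq \dim(\overset{\sim}R)$.

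The main obstacle is conceptual rather than computational, and lies entirely in the reverse inequality: without closedness the contraction map on $\Spec$ need not be injective on a chain, and $\dim(\overset{\sim}R)$ can in general exceed $\dim(R)$. The content of the corollary is thus the single invocation of \cite[Thm 4.5]{d}, which is already orchestrated inside the proof of Proposition \ref{41}; the corollary is obtained by reading off the equality $\dim(R) = \dim(\overset{\sim}R)$ that appears mid-proof there, before it is substituted into the $\pdepth$ computation.
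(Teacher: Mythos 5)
Your proposal is correct and matches the paper's intent exactly: the paper derives this corollary by the remark ``The above proof shows,'' i.e., by reading off the chain arguments assembled in the proof of Proposition \ref{41} --- the extension of primes via \cite[5.10.4]{mat2} for $\dim(R)\leq\dim(\overset{\sim}R)$ and the contraction argument with \cite[Thm 4.5]{d} (where closedness enters) for the reverse inequality. Your isolation of the two halves, including the observation that only the second uses closedness, is precisely the argument the paper relies on.
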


\begin{definition}
	We say a quasi-local ring $R$ is Gorenstein if $\id_R(R)<\infty$.
\end{definition}
 \begin{proposition}\label{411}
 	Let $(R,\fm)$ be a closed local domain. Then $R$ is Gorenstein if and only if $\overset{\sim} R$ is Gorenstein.
 \end{proposition}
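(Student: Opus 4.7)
The plan is to mimic the strategy of Proposition \ref{41}, transporting the Gorenstein condition $\id_R(R)<\infty$ across the faithfully flat extension $R\to\overset{\sim} R$. The closed hypothesis supplies three key inputs: faithful flatness of $\overset{\sim} R$ over $R$; the identification $\overset{\sim} R/\fm\overset{\sim} R\cong R/\fm$ with $\fm\overset{\sim} R$ as the unique maximal ideal of $\overset{\sim} R$; and the tight ideal correspondence between $R$ and $\overset{\sim} R$ recorded in \cite[Thm 4.5]{d}. Proposition \ref{41} and Corollary \ref{42} already transfer the Cohen-Macaulay part and identify the common value of the dimension, so only the finiteness of $\id(R)$ needs to be tracked.

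For the forward implication I would assume $\id_R(R)=n<\infty$, fix an arbitrary ideal $\fa$ of $R$ and an integer $i>n$, so that $\Ext_R^i(R/\fa,R)=0$. Since $R/\fa$ is cyclic and $\overset{\sim} R$ is $R$-flat, flat base change yields
$$\Ext_{\overset{\sim} R}^i(\overset{\sim} R/\fa\overset{\sim} R,\overset{\sim} R)\;\cong\;\Ext_R^i(R/\fa,R)\otimes_R\overset{\sim} R\;=\;0.$$
Combining this with the ideal-lattice control furnished by the closed hypothesis, so that every ideal of $\overset{\sim} R$ is extended from an ideal of $R$, gives $\Ext_{\overset{\sim} R}^{>n}(\overset{\sim} R/J,\overset{\sim} R)=0$ for every ideal $J$ of $\overset{\sim} R$, whence $\id_{\overset{\sim} R}(\overset{\sim} R)\leq n$ and $\overset{\sim} R$ is Gorenstein.

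Conversely, if $\id_{\overset{\sim} R}(\overset{\sim} R)=m<\infty$, the same flat base change produces $\Ext_R^i(R/\fa,R)\otimes_R\overset{\sim} R\cong\Ext_{\overset{\sim} R}^i(\overset{\sim} R/\fa\overset{\sim} R,\overset{\sim} R)=0$ for every ideal $\fa$ of $R$ and every $i>m$; faithful flatness then forces $\Ext_R^i(R/\fa,R)=0$, and so $\id_R(R)\leq m$ and $R$ is Gorenstein.

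The delicate point, and the main obstacle, is that we are outside the Noetherian world, so finiteness of injective dimension must be checked against $R/\fa$ for \emph{every} ideal $\fa$ rather than only for finitely generated ones. Both the flat base change isomorphism and the control of the ideals of $\overset{\sim} R$ must be valid at this level of generality, and the closed hypothesis, via Matlis' ideal-extension theorems, is precisely what supplies the rigidity needed to run the classical Noetherian-style argument.
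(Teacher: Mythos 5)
Your overall strategy---transporting $\id_R(R)<\infty$ across the faithfully flat map $R\to\overset{\sim} R$ via a Baer-type criterion---is reasonable in spirit, but it rests on two steps that you yourself flag as ``the delicate point'' and then never supply, and at least one of them is genuinely problematic. The base change isomorphism $\Ext_R^i(R/\fa,R)\otimes_R\overset{\sim} R\cong\Ext_{\overset{\sim} R}^i(\overset{\sim} R/\fa\overset{\sim} R,\overset{\sim} R)$ is valid when $R/\fa$ admits a resolution by finitely generated projective $R$-modules; outside the noetherian world this fails for an arbitrary ideal $\fa$, and the Baer criterion for injective dimension requires testing against \emph{all} ideals---exactly the ones for which the isomorphism is unavailable. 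Likewise, the assertion that every ideal of $\overset{\sim} R$ is extended from an ideal of $R$ is much stronger than what \cite[Thm 4.5]{d} provides (a correspondence of \emph{prime} ideals, which is all that Proposition \ref{41} uses), and you give no argument for it. As written, both implications of your proof are incomplete.

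The paper's proof sidesteps these issues entirely. It uses the fact that, over a domain, $\id_A(M)=\sup\{i:\Ext_A^i(T,M)\neq 0,\ T\ \text{torsion}\}$, together with Matlis' equivalence between torsion $R$-modules and torsion $\overset{\sim} R$-modules and his change-of-rings isomorphisms \cite[5.5.1--5.5.3]{mat2}, which give $\Ext_R^i(T,R)\cong\Ext_{\overset{\sim} R}^i(T,\overset{\sim} R)$ for \emph{every} torsion module $T$ directly, with no finite presentation hypothesis. This yields $\id_R(R)=\id_{\overset{\sim} R}(\overset{\sim} R)$ on the nose. To salvage your approach you would need to replace generic flat base change by these Matlis isomorphisms, or else justify why testing against finitely generated (hence finitely presented, were $R$ coherent) cyclic modules suffices---neither of which is automatic here.
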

 \begin{proof}
 We use the following, where $A\in\{\overset{\sim} R,R\}$:
 $$\id_A(M)=\sup\{i:\Ext_A^i(torsion, M)\neq 0\}.$$In view of \cite[5.5.1]{mat2}	$$\lbrace\emph{torsion modules over }R\rbrace \rightleftharpoons \lbrace \emph{torsion modules over } \overset{\sim} R \rbrace.$$
Let $T$ be torsion. Recall
 $$\begin{array}{ll}
\Ext_R^i(T, \overset{\sim} R) &\stackrel{1}= \Ext_{ \overset{\sim}R}^i(T, \overset{\sim} R) \\
 &\stackrel{}= \Ext_{ \overset{\sim}R}^i(T, \overset{\sim} R\otimes R)\\
 &\stackrel{2}=\Ext_{ R}^i(T, \overset{\sim} R\otimes R)\\ &\stackrel{3}=\Ext_{ R}^i(T, R),\\
 \end{array}$$
 where:\begin{enumerate}
 	\item[1)] is in \cite[5.5.1]{mat2}

 	\item[2)]	is in \cite[5.5.2]{mat2}

 	\item[3)]   is in \cite[5.5.3]{mat2}.
 \end{enumerate}So,  $\Ext_R^i(T, \overset{\sim} R)\cong \Ext_{ \overset{\sim}R}^i(T, \overset{\sim} R).$
We proved that 	$\id_R(R)=\id_{ \overset{\sim}R}(\overset{\sim}R)$. The desired  claim is clear by this.
 	\end{proof}
	A natural generalization of noetherian rings is the coherent ring:
	\begin{definition}\label{44}
	$R$ is called coherent if any finitely generated submodule of a finitely presented module is again finitely presented.
\end{definition}
The book of Glaz \cite{G} is a useful reference.

\begin{notation}
By $\fd_R(-)$ we mean the flat dimension of $(-)$. By weak dimension of $R$ we mean  
	$$\wdim (R) := \sup \lbrace \fd_R (M) \mid \emph{ M is an R-module}\rbrace.$$
\end{notation}

Matlis proved is \cite[9.2]{mat2} that
 $\wdim (R) \leq \wdim( \overset{\sim} R)$. Is this sharp?  Here, is the answer:
\begin{proposition}\label{45}
	Let $(R, \fm)$ be a quasi-local  integral domain such that $ \overset{\sim} R$ is coherent. Then  $\wdim(R)=\wdim( \overset{\sim} R)$.
\end{proposition}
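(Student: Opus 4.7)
The plan is to combine Matlis's inequality $\wdim(R) \le \wdim(\overset{\sim}R)$ (recalled just before the statement as \cite[9.2]{mat2}) with its reverse; the substance of the sharpening is to establish $\wdim(\overset{\sim}R) \le \wdim(R)$.

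The key input I would use is that $R \to \overset{\sim}R$ is not merely flat but a flat ring epimorphism, i.e.\ $\overset{\sim}R \otimes_R \overset{\sim}R \cong \overset{\sim}R$; this is standard Matlis theory and lines up with the sort of $\otimes$-identities from \cite[5.5]{mat2} already invoked elsewhere in the paper. From this two things follow. First, for every $\overset{\sim}R$-module $M$, the multiplication map $M \otimes_R \overset{\sim}R \to M$ is an isomorphism. Second, given a flat $R$-resolution $F_\bullet \to M$ of an $\overset{\sim}R$-module $M$, base-changing along $R \to \overset{\sim}R$ preserves exactness (since $\overset{\sim}R$ is $R$-flat) and preserves flatness, so $F_\bullet \otimes_R \overset{\sim}R \to M \otimes_R \overset{\sim}R \cong M$ is a flat $\overset{\sim}R$-resolution of $M$. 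Computing $\Tor$ against any $\overset{\sim}R$-module $N$ from this base-changed resolution and comparing with the original yields the natural identification
\[
\Tor_i^{\overset{\sim}R}(M,N) \;\cong\; \Tor_i^R(M,N)
\]
for all $\overset{\sim}R$-modules $M,N$ and all $i \geq 0$.

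From this I would immediately read off $\fd_{\overset{\sim}R}(M) \leq \fd_R(M)$ for every $\overset{\sim}R$-module $M$: if $\fd_R(M) \leq n$ then $\Tor_{n+1}^R(M,N) = 0$ for every $R$-module $N$, in particular for every $\overset{\sim}R$-module $N$, so $\Tor_{n+1}^{\overset{\sim}R}(M,N) = 0$ as well. Taking the supremum over $\overset{\sim}R$-modules then gives $\wdim(\overset{\sim}R) \leq \wdim(R)$, which combined with Matlis's inequality produces the claimed equality. The coherence of $\overset{\sim}R$ enters this bookkeeping as a convenience: it permits restricting the supremum defining $\wdim(\overset{\sim}R)$ to finitely presented quotients $\overset{\sim}R/J$ with $J$ finitely generated (hence, by coherence, finitely presented), and in the case of finite weak dimension it allows one to promote a finitely presented flat module to a projective one, streamlining the comparison.

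The main obstacle I anticipate is not in the final bound itself but in nailing down the flat epimorphism property $\overset{\sim}R \otimes_R \overset{\sim}R \cong \overset{\sim}R$ at the level of generality hypothesized here (quasi-local integral domain, without insisting on closedness as in Proposition \ref{41}). Once that identification is securely in place, the $\Tor$ comparison and the resulting $\fd$ inequality are formal, and combining with \cite[9.2]{mat2} finishes the proof.
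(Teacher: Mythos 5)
There is a genuine gap, and it is exactly at the point you flagged as your ``main obstacle'': the map $R\to\overset{\sim}R$ is \emph{not} a ring epimorphism unless $R=\overset{\sim}R$, in which case the proposition is vacuous. The paper's own Proposition \ref{46} computes $\overset{\sim}R\otimes_R\overset{\sim}R\cong\overset{\sim}R\oplus Q^{u+1}$ whenever $\overset{\sim}R\neq R$, where $u=\dim_Q\Ext^1_R(Q,R)\geq 1$; since $\overset{\sim}R$ is cotorsion one has $\Hom_R(Q,\overset{\sim}R)=0$, so $\overset{\sim}R\oplus Q^{u+1}$ can never be isomorphic to $\overset{\sim}R$. (Concretely, for $R=\mathbb{Z}_{(p)}$ the tensor square of $\widehat{R}=\mathbb{Z}_p$ over $R$ is enormous.) With the epimorphism property gone, the multiplication map $M\otimes_R\overset{\sim}R\to M$ fails to be an isomorphism already for $M=\overset{\sim}R$, the base-changed resolution no longer resolves $M$, and the identification $\Tor_i^{\overset{\sim}R}(M,N)\cong\Tor_i^R(M,N)$ collapses. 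The Matlis change-of-rings identities \cite[5.5]{mat2} that the paper does use (in Proposition \ref{411}) are valid only for \emph{torsion} modules and do not extend to arbitrary $\overset{\sim}R$-modules. So the entire reverse inequality $\wdim(\overset{\sim}R)\leq\wdim(R)$ is unsupported.

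The paper's actual route is quite different and makes essential (not merely convenient) use of coherence: assuming $\wdim(R)<\infty$, coherence descends from $\overset{\sim}R$ to $R$ by \cite[2.4.5]{G}; then \cite[Thm 2.6]{AT} gives $\wdim(R)=\pdepth(\fm,R)$, polynomial depth is invariant under the flat local map $R\to\overset{\sim}R$, and \cite[9.2.1]{mat2} transfers finiteness of $\fd(R/\fm)$ to $\fd(\overset{\sim}R/\fm\overset{\sim}R)$; finally \cite[2.5.9]{G} bounds $\pd(M)$ by $\pdepth(\overset{\sim}R)$ for every finitely presented $\overset{\sim}R$-module $M$, and over a coherent ring the weak dimension is computed on finitely presented modules. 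If you want to repair your argument you would need to replace the flat-epimorphism input by some such depth-sensitive mechanism; a purely formal $\Tor$-comparison between $R$ and $\overset{\sim}R$ is not available here.
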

\begin{proof}
	We know that by the work of Matlis \cite[9.2]{mat2} that
	 $\wdim (R) \leq \wdim( \overset{\sim} R)$.
	Then, without loss of generality we may and do assume that $\wdim(R) <\infty $.
In the light of \cite[2.4.5]{G} we see that $R$ is coherent. This allows us to apply \cite[Thm 2.6]{AT} and deduce that
	$$\wdim(R) = \pdepth (R) = \pdepth ( \overset{\sim} R).$$
	By \cite[9.2.1]{mat2}; $$\fd (\overset{\sim} R / \fm \overset{\sim} R ) = \fd ( R/ \fm) \leq \wdim(R) < \infty.$$Recall that $\pdepth(\overset{\sim} R)$ is finite.
	For every $\overset{\sim} R$-module $M$, $$\Tor_i (\overset{\sim} R / \fm \overset{\sim} R , M)=0 \quad \forall i> \pdepth(\overset{\sim} R).$$
	Thanks to \cite[2.5.9]{G} we know
	$$\pd(M) \leq \pdepth(\overset{\sim} R).$$Let us apply this to see
	$$\begin{array}{ll}
	\wdim(\overset{\sim} R) &= \sup \lbrace \pd(M) \mid M \emph {is finitely presented  as an }\overset{\sim}R\emph {-module} \rbrace\\
	&\leq \pdepth( \overset{\sim} R)\\
	&=\pdepth(R)\\
	&=\wdim(R)\\
	&\leq \wdim(\overset{\sim}R).\\
	\end{array}$$
The claim is now clear.
\end{proof}

\begin{remark}
	One may repeat the results of this section, by applying different approach
to non-noetherian Cohen-Macaulay (Gorenstein, regular and et cetera) rings. \end{remark}
\section{An application: computing $\pd_R(\widehat{R})$}

Let $(R,\fm)$ be a noetherian local ring. Matlis proved that $\pd_R( \overset{\sim} R )\leq \pd_R(Q)$.

\begin{fact}(Matlis)\label{hom}
Suppose $R$ is not complete in $R$-topology. Then $\Hom_R(\overset{\sim} R,R)	=0$.
\end{fact}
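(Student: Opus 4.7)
The plan is to show that every $R$-linear map $f\colon\overset{\sim} R\to R$ must coincide with multiplication by $a:=f(1)$, and then to extract a contradiction from $a\ne 0$ using that $R$ is not complete in the $R$-topology.

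For the first step I exploit that $R$ is dense in $\overset{\sim} R$ in the $R$-topology. Given $x\in\overset{\sim} R$ and any nonzero $r\in R$, write $x=r_{0}+ry$ with $r_{0}\in R$ and $y\in\overset{\sim} R$; substituting into $f$ yields
$$f(x)-ax=[ar_{0}+rf(y)]-[ar_{0}+ary]=r\bigl(f(y)-ay\bigr)\in r\overset{\sim} R.$$
Since $\overset{\sim} R$ is Hausdorff in the $R$-topology (being the Hausdorff completion of $R$), $\bigcap_{r\ne 0}r\overset{\sim} R=0$, forcing $f(x)=ax$ inside $\overset{\sim} R$; in particular $a\overset{\sim} R=f(\overset{\sim} R)\subseteq R$.

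Suppose now $a\ne 0$. Flatness of $\overset{\sim} R$ over the domain $R$ makes multiplication by $a$ injective on $\overset{\sim} R$, so $\overset{\sim} R\cong a\overset{\sim} R$, which is an ideal of the Noetherian ring $R$ and hence finitely generated. Being finitely generated and faithfully flat over the local ring $R$, $\overset{\sim} R$ is free of some rank $n\ge 1$. Comparing residue fields, $k\cong \overset{\sim} R/\fm\overset{\sim} R\cong k^{n}$, so $n=1$; writing $\overset{\sim} R=Re$ and $1=se$ makes $s$ a unit of the local ring $\overset{\sim} R$, hence, since $\fm\subseteq \fm\overset{\sim} R$, a unit of $R$ itself. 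Thus $\overset{\sim} R=R$, contradicting incompleteness, so $a=0$ and $f\equiv 0$.

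The delicate step is the density/Hausdorff argument that identifies $f$ with multiplication by $a$; it relies only on Matlis's description of $\overset{\sim} R$ as the completion of $R$ in the $R$-topology. Once this is in place, the remainder is a clean structural consequence of the faithful flatness of $R\to\overset{\sim} R$, the Noetherianness of $R$, and the standard principle ``finitely generated $+$ flat over a local ring $=$ free''.
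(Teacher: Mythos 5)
The paper offers no proof of this statement at all: it is recorded as a \emph{Fact} attributed to Matlis (it is \cite[Lemma 5.8 ff.]{mat2}-type material), so your argument is being compared against a citation rather than an in-text proof. That said, your proof is correct as written, and it is worth recording what it silently borrows from Matlis's theory: the density step needs $\overset{\sim} R=R+r\overset{\sim} R$ for every $0\neq r\in R$, which follows from the isomorphism $R/rR\cong \overset{\sim} R/r\overset{\sim} R$ (apply the snake lemma for multiplication by $r$ to $0\to R\to \overset{\sim} R\to \Ext^1_R(Q,R)\to 0$, using that $\Ext^1_R(Q,R)$ is a $Q$-vector space); and the Hausdorff step $\bigcap_{r\neq 0}r\overset{\sim} R=0$ holds because this intersection is a divisible submodule of the torsion-free module $\overset{\sim} R$, hence a $Q$-vector space, which must vanish since $\Hom_R(Q,\overset{\sim} R)=0$ ($\overset{\sim} R$ is cotorsion). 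One genuine criticism: your endgame (ideal of a noetherian ring, finitely generated flat is free, rank count) is legitimate in the noetherian setting of Section 7, but it is longer than necessary and does not cover Matlis's general (quasi-local, non-noetherian) statement, which the paper needs in Section 6. Once you know $f$ is multiplication by $a$ and $a\overset{\sim} R\subseteq R$ with $a\neq 0$, the quotient $\overset{\sim} R/R\cong\Ext^1_R(Q,R)$ is annihilated by $a$; but it is a $Q$-vector space, hence torsion-free, so $\overset{\sim} R=R$, contradicting incompleteness. This one-line finish uses only the exact sequence $(\ast)$ that the paper already invokes in Proposition \ref{43}, and removes the noetherian hypothesis entirely.
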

\begin{fact}\label{fin}(See \cite{GR})
	Projective dimension of any flat module is finite.
\end{fact}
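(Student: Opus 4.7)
The plan is to invoke the Raynaud--Gruson theorem, which asserts in sharpened form that over a commutative noetherian ring $R$ of finite Krull dimension $d$, every flat $R$-module $M$ satisfies $\pd_R(M)\le d$. I would organise the argument in three conceptual stages.

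First, by Lazard's theorem every flat module $F$ is a filtered direct limit of finitely generated projective modules; over a noetherian ring, every finitely generated flat module is already projective, so $F$ is a directed union of projective submodules. This gives $\fd_R(F)=0$ by construction but yields no information about $\pd_R(F)$, since arbitrary directed unions of projectives need not be projective. To extract a genuine projective resolution, I would next apply Gruson's devissage and present $F$ as a continuous well-ordered union $F=\bigcup_{\alpha<\lambda}F_\alpha$ whose successive quotients $F_{\alpha+1}/F_\alpha$ are countably generated and flat. By the Kaplansky--Bass structure theorem, countably generated flat modules are countable ascending unions of finitely generated free submodules and therefore have projective dimension at most one. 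Invoking Eklof's lemma would then convert this into a bound on $\pd_R(F)$ in terms of the cardinality of a generating set of $F$.

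The main obstacle, and the final step, is to replace the cardinality-dependent bound coming out of Eklof's lemma by the intrinsic ring-theoretic invariant $\dim R$. This is where noetherianness enters essentially. One way to execute it is to prove directly that $\Ext^{i}_R(F,N)=0$ for all $i>d$ and all modules $N$, by combining a Mittag-Leffler analysis of flat modules with a Koszul/local-cohomology argument that reflects the Grothendieck vanishing $H^{>d}_{\fm}(-)=0$ and a spectral sequence linking $\Ext$ and local cohomology. The payoff is a uniform bound independent of $|F|$. The necessity of the noetherian hypothesis is highlighted by Osofsky's classical examples over large valuation rings, where flat modules can have arbitrarily large projective dimension; hence the passage from ``finite projective dimension for each cardinality class'' to ``finite projective dimension bounded by $\dim R$'' is the genuine commutative-algebra input and the place where I would expect the most technical work.
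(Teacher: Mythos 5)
The paper does not actually prove this statement: it is recorded as a Fact with a citation to Raynaud--Gruson, and it is invoked in Section 7 where $(R,\fm)$ is a noetherian local ring, so the implicit hypotheses are that $R$ is noetherian of finite Krull dimension $d$ and the intended conclusion is $\pd_R(F)\le d$ for $F$ flat. Your outline correctly identifies these hypotheses and correctly diagnoses that the cardinality-induction route (Lazard, transfinite devissage, Eklof/Auslander) cannot by itself give a bound independent of the size of $F$. Two of your intermediate claims are inaccurate, though repairable: a countably generated flat module is a countable \emph{direct limit} of finitely generated projectives, not in general an ascending union of finitely generated free submodules, and the bound $\pd\le 1$ comes from the Milnor telescope sequence $0\to\oplus P_n\to\oplus P_n\to\varinjlim P_n\to 0$; moreover the continuous filtration of an arbitrary flat module has $(|R|+\aleph_0)$-presented flat quotients, not countably presented ones, so for uncountable $R$ this route only yields $\pd_R(F)\le n+1$ when the relevant cardinal is $\aleph_n$ with $n$ finite --- which is exactly why it fails to prove finiteness in general.

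The genuine gap is your third stage, which is where the entire content of the theorem lives. You propose to prove $\Ext^i_R(F,N)=0$ for $i>d$ by ``combining a Mittag-Leffler analysis with a Koszul/local-cohomology argument reflecting $H^{>d}_{\fm}(-)=0$ and a spectral sequence linking $\Ext$ and local cohomology,'' but no such spectral sequence is exhibited, and Grothendieck vanishing concerns the derived functors of the section functor evaluated on a module; it has no direct bearing on $\Ext^i_R(F,-)$ for $F$ flat. I do not see how to turn this into an argument. The proof in \cite{GR} runs quite differently: one inducts on the Krull dimension, using a devissage of flat modules relative to the filtration of $\Spec(R)$ by dimension of support, together with the theorem that a countably generated flat Mittag-Leffler module is projective; none of that machinery appears in your outline. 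As written, the proposal defers the Fact to an unproved assertion at precisely the point where the noetherian and finite-dimensionality hypotheses must enter, so it does not constitute a proof.
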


We need the following:
\begin{corollary}\label{nfree}
	Suppose $R$ is not complete in $\fm$-adic topology. Then $\Hom_R(\widehat{R},R)	=0$. In particular,
	
	\begin{enumerate}
		\item[i)] $\widehat{R}$ is not free as an $R$-module.
		
		\item[ii)]  $\widehat{R}$ is not finitely generated as an $R$-module.
	\end{enumerate}
\end{corollary}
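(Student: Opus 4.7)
The plan is to establish the main equality $\Hom_R(\widehat R,R)=0$ directly, by pinning down any $R$-linear map $f\colon\widehat R\to R$ as multiplication by the scalar $g:=f(1)\in R$ (read inside $\widehat R$), and then showing that this scalar has to vanish; the two ``in particular'' assertions then fall out. The proof structure mirrors that of Matlis' Fact~\ref{hom}, but uses the $\fm$-adic topology throughout.

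First I would show that any $f\in\Hom_R(\widehat R,R)$ satisfies $f(x)=xg$ for every $x\in\widehat R$, where the product is taken inside $\widehat R$. Fix $x$ and $n\geq1$. Since $R\to\widehat R/\fm^n\widehat R$ is surjective, write $x=a_n+y_n$ with $a_n\in R$ and $y_n\in\fm^n\widehat R$. The $R$-linearity of $f$ gives $f(y_n)\in\fm^n f(\widehat R)\subseteq\fm^n R$, hence $f(x)-a_ng\in\fm^n R$; simultaneously, inside $\widehat R$ one has $xg-a_ng=y_ng\in\fm^n\widehat R$. Subtracting, $f(x)-xg\in\fm^n\widehat R$ for every $n$, and Krull's intersection theorem applied to the noetherian local ring $\widehat R$ forces $f(x)=xg$.

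From this, $g\widehat R\subseteq R$, so $g\widehat R$ is an ideal of the noetherian ring $R$, hence finitely generated as an $R$-module. If $g\neq 0$, flatness of $R\to\widehat R$ together with $R$ being a domain makes $g$ a nonzerodivisor in $\widehat R$, so multiplication by $g$ gives an $R$-linear isomorphism $\widehat R\cong g\widehat R$; consequently $\widehat R$ is finitely generated over $R$. Combining with $\widehat R/\fm\widehat R=R/\fm$, Nakayama forces $\widehat R=Rx$ for some $x\in\widehat R$. Writing $1=rx$ with $r\in R$ and noting that $r$ must be a unit in the local ring $\widehat R$ (otherwise $1\in\fm\widehat R$), and hence a unit in $R$, we obtain $x=r^{-1}\in R$ and therefore $\widehat R=Rx\subseteq R$, contradicting $R\neq\widehat R$. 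So $g=0$ and $f=0$.

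Finally, (i) is immediate from the main statement: any basis decomposition of a free $\widehat R$ would produce nonzero coordinate projections in $\Hom_R(\widehat R,R)$. For (ii), the Nakayama argument above already shows directly, without invoking the main statement, that any finitely generated ring extension $R\hookrightarrow\widehat R$ with $\widehat R/\fm\widehat R=R/\fm$ forces $\widehat R=R$. The main obstacle is the first step: one must track two approximations of $f(x)$ which a priori live in different rings (one controlled modulo $\fm^n R$, the other modulo $\fm^n\widehat R$), and only Krull's intersection in $\widehat R$ closes the gap between them; once $f$ is identified with multiplication by $g$, the rest is routine.
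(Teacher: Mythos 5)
Your proof is correct, but it takes a genuinely different route from the paper's. The paper disposes of Corollary \ref{nfree} in one line (``Clear by Fact \ref{hom}''): it imports Matlis' vanishing $\Hom_R(\overset{\sim}R,R)=0$ for the $R$-topology completion and leaves implicit the passage from $\overset{\sim}R$ to the $\fm$-adic completion $\widehat R$, as well as the comparison between ``not complete in the $R$-topology'' and ``not complete in the $\fm$-adic topology''. You instead argue from scratch: the approximation $x=a_n+y_n$ with $y_n\in\fm^n\widehat R$, combined with Krull's intersection theorem in the noetherian ring $\widehat R$, identifies every $f\in\Hom_R(\widehat R,R)$ with multiplication by $g=f(1)$; then $g\neq 0$ is excluded because $g\widehat R$ would be an ideal of $R$, so that flatness of $R\to\widehat R$ makes $\widehat R\cong g\widehat R$ finitely generated, and Nakayama together with $\widehat R/\fm\widehat R\cong R/\fm$ forces $\widehat R=R$, contradicting incompleteness. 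All of these steps are sound (the section's standing hypothesis that $R$ is a noetherian local domain is exactly what you need for the surjectivity of $R\to\widehat R/\fm^n\widehat R$, for Krull intersection in $\widehat R$, and for $g$ to be a nonzerodivisor after base change). What your approach buys is self-containedness --- no appeal to Matlis' cotorsion machinery and no need to relate the two topologies --- and as a by-product your first step is the classical computation $\End_R(\widehat R)\cong\widehat R$, so it essentially reproves Fact \ref{hom} in the noetherian setting; what it costs is length, since the paper's citation is a one-liner. Your treatment of (i) via coordinate projections and of (ii) via the same Nakayama argument is exactly what the paper intends.
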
	

\begin{proof}
	Clear by Fact \ref{hom}.
\end{proof}
\begin{hypothesis}
	$R \neq  \overset{\sim} R $.
	\end{hypothesis}
	
\begin{proposition}\label{43}
 One has
 $\pd_R(Q) = \pd_R( \overset{\sim} R )\leq \pd_R(\widehat{R}).$
\end{proposition}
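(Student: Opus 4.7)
Matlis has already established $\pd_R(\overset{\sim} R) \leq \pd_R(Q)$, so the content of the proposition beyond this is the reverse inequality together with the comparison $\pd_R(\overset{\sim} R) \leq \pd_R(\widehat R)$. Both will be executed via a single mechanism: exhibit a short exact sequence whose middle term is one of the completions and whose cokernel is a direct sum of copies of $Q$, and then unwind the standard homological bound on short exact sequences.

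For the reverse inequality $\pd_R(Q) \leq \pd_R(\overset{\sim} R)$, the plan is to use Matlis's canonical short exact sequence
\[
0 \lo R \lo \overset{\sim} R \lo \overset{\sim} R/R \lo 0,
\]
combined with the fact that $\overset{\sim} R/R$ is $h$-divisible and torsion-free, hence of the form $Q^{(I)}$ for some index set $I$. The hypothesis $R \neq \overset{\sim} R$ forces $I \neq \emptyset$ (so $\pd_R(Q^{(I)}) = \pd_R(Q)$) and also $\pd_R(\overset{\sim} R) \geq 1$, since otherwise $\overset{\sim} R$ would be a free $R$-module and then necessarily equal to $R$ in view of the analysis behind Corollary \ref{nfree}. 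The standard bound
\[
\pd_R(Q) = \pd_R\big(Q^{(I)}\big) \leq \max\{\pd_R(R)+1,\, \pd_R(\overset{\sim} R)\} = \max\{1,\, \pd_R(\overset{\sim} R)\} = \pd_R(\overset{\sim} R)
\]
then closes this step, and together with Matlis's inequality gives the equality $\pd_R(Q) = \pd_R(\overset{\sim} R)$.

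For the second inequality, I would exploit that the $R$-topology is finer than the $\fm$-adic topology (every power $\fm^n$ is a nonzero ideal), so the completions satisfy $R \subseteq \overset{\sim} R \subseteq \widehat R$. If $\overset{\sim} R = \widehat R$ the claim is trivial, so assume the inclusion is strict. A Matlis-type analysis applied to the pair $(\overset{\sim} R, \widehat R)$ in place of $(R, \overset{\sim} R)$ should identify $\widehat R/\overset{\sim} R$ as $h$-divisible and torsion-free, so isomorphic to $Q^{(J)}$ for some nonempty $J$. Applying the standard bound to
\[
0 \lo \overset{\sim} R \lo \widehat R \lo Q^{(J)} \lo 0
\]
yields $\pd_R(\overset{\sim} R) \leq \max\{\pd_R(\widehat R),\, \pd_R(Q)-1\}$. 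Using the equality $\pd_R(Q) = \pd_R(\overset{\sim} R)$ proved in the previous step, any strict inequality $\pd_R(\overset{\sim} R) > \pd_R(\widehat R)$ would force the absurdity $\pd_R(\overset{\sim} R) \leq \pd_R(\overset{\sim} R) - 1$, which is impossible since all three projective dimensions are finite by Fact \ref{fin}. Hence $\pd_R(\overset{\sim} R) \leq \pd_R(\widehat R)$.

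The main obstacle is the structural claim that $\widehat R/\overset{\sim} R$ is $h$-divisible and torsion-free. Torsion-freeness should follow by tracking how an element of $\widehat R$ that becomes torsion modulo $\overset{\sim} R$ would have to lift into $\overset{\sim} R$ itself, but $h$-divisibility requires a careful description of which $\fm$-adic Cauchy sequences fail to be $R$-topology Cauchy---essentially the divisible-hull picture of $\overset{\sim} R$ inside $\widehat R$ implicit in \cite{mat2} and \cite{d}. Locating, or reproving, this structural input is the real work of the proof.
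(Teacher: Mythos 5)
Your first half---the equality $\pd_R(Q)=\pd_R(\overset{\sim}R)$---is correct and is essentially the paper's own argument: both proofs run the sequence $0\to R\to \overset{\sim}R\to \oplus_t Q\to 0$ with $t>0$, use $\pd_R(\overset{\sim}R)\geq 1$ (from $\Hom_R(\overset{\sim}R,R)=0$, Fact \ref{hom}, plus Kaplansky), and conclude $\pd_R(Q)=\pd_R(\oplus_t Q)\leq\pd_R(\overset{\sim}R)$; the paper phrases this by exhibiting a test module $M$ with $\Ext_R^p(Q,M)\neq 0$ rather than quoting the standard bound, but the content is identical.

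The second half has a genuine gap: the structural claim that $\widehat{R}/\overset{\sim}R$ is $h$-divisible and torsion-free, hence $\cong Q^{(J)}$, is false in general, not merely hard to locate. Since $\overset{\sim}R/R\cong\Ext_R^1(Q,R)$ is a $Q$-vector space, hence divisible, divisibility of $\widehat{R}/\overset{\sim}R$ would force $\widehat{R}/R$ to be divisible, i.e.\ $\widehat{R}=x\widehat{R}+R$ for every nonzero $x\in\fm$. But $\widehat{R}/x\widehat{R}$ is the $\fm$-adic completion of $R/xR$, and the image of $R$ there is just $R/xR$; so divisibility by $x$ fails whenever $R/xR$ is not complete --- e.g.\ $R=k[x,y]_{(x,y)}$, $R/xR\cong k[y]_{(y)}$. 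Thus in every case where $\overset{\sim}R\subsetneq\widehat{R}$ (the only case your argument addresses), the cokernel is flat but not divisible, and your bound $\pd_R(\overset{\sim}R)\leq\max\{\pd_R(\widehat{R}),\pd_R(Q)-1\}$ has no justification because you control neither $\pd_R(\widehat{R}/\overset{\sim}R)$ nor its relation to $\pd_R(Q)$. The paper sidesteps this entirely via Matlis's splitting $\widehat{R}/R\cong\widehat{R}/\overset{\sim}R\oplus\overset{\sim}R/R$ (\cite[7.9]{mat2}, which follows from injectivity of $\overset{\sim}R/R\cong\oplus_t Q$): the summand gives $\pd_R(\widehat{R}/R)\geq\pd_R(\overset{\sim}R/R)$ with no information needed about the complementary factor, and the two sequences $0\to R\to\overset{\sim}R\to\overset{\sim}R/R\to 0$ and $0\to R\to\widehat{R}\to\widehat{R}/R\to 0$ (together with the non-freeness of $\widehat{R}/R$ from Corollary \ref{nfree}) convert this into $\pd_R(\widehat{R})=\pd_R(\widehat{R}/R)\geq\pd_R(\overset{\sim}R/R)\geq\pd_R(\overset{\sim}R)$. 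You should replace your cokernel analysis with this splitting.
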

\begin{proof}
By \cite[5.2]{mat2} there is an exact sequence
$$0 \longrightarrow R \longrightarrow  \overset{\sim} R \longrightarrow \Ext^1_R (Q,R) \longrightarrow 0\quad(\ast)$$
If $R \neq  \overset{\sim} R $ then there exists $t >0$ such that $\Ext^1_R (Q,R)= \oplus_t Q$.
Let $p := \pd_R(Q) <\infty $ (see fact \ref{fin}). We know by the work of Matlis that $$\pd_R ( \overset{\sim} R) \leq \pd_R(Q) =p\quad(+)$$
The proof is divided into some steps:

Step 1: If $p=1$ then $\pd_R ( \overset{\sim} R ) \leq \pd_R(Q) =1$.

Indeed, $$ 0\lneqq \pd _R( \overset{\sim} R ) \leq \pd _R(Q) = 1$$ and so $\pd_R ( \overset{\sim} R )=1$\\
Step 2: $\pd_R (Q)= \pd_R ( \overset{\sim} R ) $ and $p>1$.

Indeed, there exists an $R$-module $M$ such that $\Ext^p _R(Q,M)\neq 0 $.
From $(\ast)$ th following exact sequence derives:
$$\Ext^{p-1}_R (R,M) \longrightarrow \sqcap \Ext^p_R (Q,M) \longrightarrow \Ext^p_R ( \overset{\sim} R,M) \longrightarrow \Ext^p_R (R,M).$$
As $\Ext^p_R ( \overset{\sim} R ,M) \neq 0$ and that $p-1>0$ we observe that $$\pd_R ( \overset{\sim} R ) \geq p=\pd_R(Q).$$ Combine this along with $(+)$ we have $$\pd_R ( \overset{\sim} R )= \pd_R (Q) .$$

Step 3:  $\pd _R(\widehat{R}/R ) \geq \pd_R ( \overset{\sim} R /R ) \geq \pd_R ( \overset{\sim} R)$.

Indeed, we use the formula of Matlis \cite[7.9]{mat2}:
$$\widehat{R}/R  \cong \widehat{R}/\overset{\sim} R  \oplus  \overset{\sim} R / R.$$
This gives us $\pd_R (\widehat{R}/R ) \geq \pd _R( \overset{\sim} R /R )$.
We look at
$$0 \longrightarrow R \longrightarrow  \overset{\sim} R \longrightarrow  \overset{\sim} R/R \longrightarrow 0. $$

Let $j>0$. In the light of the following exact sequence
$$\Ext ^j_R ( \overset{\sim} R/R , -) \longrightarrow \Ext^j _R( \overset{\sim} R , -) \longrightarrow \Ext^j_R (R,-)=0,$$
we observe that  $\pd_R ( \overset{\sim} R /R) \geq \pd_R ( \overset{\sim} R)$ and so $\pd _R(\widehat{R}/R ) \geq \pd_R (\widehat{R})$.

Step 4:
$\pd_R (\widehat{R})= \pd_R (\widehat{R}/R)$.

To see this,
let $j_0>0$, and look at
$$0 \longrightarrow R \longrightarrow \widehat{R} \longrightarrow \widehat{R}/R \longrightarrow 0.$$
This induces
$$\Ext^{j_0} _R(\widehat{R}/R , -) \longrightarrow \Ext^{j_0}_R (\widehat{R}, -) \longrightarrow \Ext^{j_0}_R (R,-)=0.$$
 We claim that $\widehat{R}/R$ is not free. If not, then we look at
$$ 0 \longrightarrow R \longrightarrow \widehat{R} \longrightarrow \widehat{R}/R \longrightarrow 0$$
splits and $\widehat{R} = R \oplus \widehat{R}/R$ be free. This is in contradiction with Corollary \ref{nfree}. In sum, $\widehat{R}/R$ is not free.

Let $j_1> \pd_R (\widehat{R}/R)$. Then
$\Ext^{j_1} _R(\widehat{R},-) =0$ and so $\pd_R (\widehat{R}) \leq \pd_R(\widehat{R}/R)$.

Now, let $j= \pd_R (\widehat{R}/R)$, then  $j>0$ and there exists $R$-module $M$ such that $\Ext^j_R (\widehat{R}/R , M) \neq 0$. There are two possibilities $j=1$ or $j>2$.

Suppose first that $j>2$.  In view of
$$0=\Ext^{j-1}_R (R,M) \longrightarrow \Ext^j_R ( \widehat{R}/R ,M) \longrightarrow \Ext^j_R( \widehat{R} , M) \longrightarrow \Ext^j _R(R,M)=0, $$
$\Ext^j_R( \widehat{R} , M)\cong \Ext^j_R (\widehat{R}/R , M)  \neq 0$.
We conclude that
$$\pd_R(\widehat{R}) \geq j= \pd_R (\widehat{R}/R) \geq \pd_R(\widehat{R}).$$ Consequently,

$$\pd_R (\widehat{R}) = \pd_R (\widehat{R}/R).$$

It remains to deal with $j=1$. Suppose on the way of contradiction that the claim $\pd_R (\widehat{R}) = \pd _R(\widehat{R}/R)$ is not true. We proved that
$$0 < \pd_R(\widehat{R}) \leq \pd_R(\widehat{R}/R) =j =1,$$
and so $\widehat{R}$ is free.
This is in contradiction with  Corollary \ref{nfree}.
\end{proof}

Let us record the following:

\begin{corollary}(Matlis)
	Suppose  $R$ is not complete with respect to $\fm$-adic topology. Then $\widehat{R}/R$ is flat. In particular, $0\in \Ass(\widehat{R}/R)$. The same result holds for $\widehat{R}/\overset{\sim} R$ and  $\overset{\sim} R / R$, provided they are nonzero.
\end{corollary}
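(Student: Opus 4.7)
The plan is to establish flatness of $\widehat{R}/R$ directly, and then transfer the conclusion to the two other quotients via the known splitting and handle the associated-prime claim separately. First, I would use that $\widehat{R}$ is faithfully flat over the noetherian local ring $R$. Applying $-\otimes_R M$ to the canonical exact sequence
$$0 \longrightarrow R \longrightarrow \widehat{R} \longrightarrow \widehat{R}/R \longrightarrow 0$$
and chasing the induced long exact Tor-sequence, one gets $\Tor_i^R(\widehat{R}/R,M)=0$ for all $i\geq 2$ by shifting against the vanishing of $\Tor^R_{\geq 1}(R,-)$ and $\Tor^R_{\geq 1}(\widehat{R},-)$, together with the exact segment
$$0 \longrightarrow \Tor_1^R(\widehat{R}/R, M) \longrightarrow M \longrightarrow \widehat{R}\otimes_R M.$$
Faithful flatness of $\widehat{R}$ forces the right-hand map to be injective for every $M$, so $\Tor_1^R(\widehat{R}/R,M)=0$, and $\widehat{R}/R$ is flat.

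Next, I would invoke Matlis' decomposition $\widehat{R}/R \cong \widehat{R}/\overset{\sim}R \oplus \overset{\sim}R/R$ from \cite[7.9]{mat2} (already exploited in Step~3 of Proposition~\ref{43}). Each summand is a direct summand of the flat module $\widehat{R}/R$, hence flat as soon as it is nonzero; this is exactly what is needed to transfer the conclusion to $\widehat{R}/\overset{\sim}R$ and to $\overset{\sim}R/R$ under the stated provisos.

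Finally, for the associated-prime assertion, I would use the standard fact that over a domain every flat module is torsion-free, so any nonzero element has trivial annihilator, giving $0\in\Ass(-)$. Nonzeroness of $\widehat{R}/R$ is precisely the hypothesis that $R$ is not $\fm$-adically complete, and the provisos in the statement handle the other two modules. I do not foresee a serious obstacle; the only subtle point is to note that it is the \emph{faithful} flatness of $\widehat{R}$, not plain flatness, that upgrades the $\Tor$-long-exact-sequence calculation into the vanishing of $\Tor_1$ and hence into genuine flatness of the quotient.
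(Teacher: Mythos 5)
Your argument is correct. The paper records this corollary without proof (it is attributed to Matlis), so there is no written argument to match; the route implicit in the surrounding text, however, is different from yours. Proposition \ref{43} works with Matlis' exact sequence $0\to R\to \overset{\sim}R\to \Ext^1_R(Q,R)\to 0$ together with $\Ext^1_R(Q,R)\cong \oplus_t Q$, and combined with \cite[7.9]{mat2} this exhibits $\overset{\sim}R/R$, $\widehat{R}/\overset{\sim}R$ and $\widehat{R}/R$ as $Q$-vector spaces --- a statement strictly stronger than flatness, from which $0\in\Ass(-)$ is immediate because $\Ass(Q)=\{0\}$. Your proof instead extracts flatness from the single general fact that a faithfully flat (hence universally injective) extension $R\to\widehat{R}$ has flat cokernel: the segment $0\to\Tor_1^R(\widehat{R}/R,M)\to M\to \widehat{R}\otimes_RM$ plus purity kills $\Tor_1$ for every $M$. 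This is more elementary and more general --- it uses nothing about $Q$ or the structure of $\Ext^1_R(Q,R)$ --- at the price of yielding only flatness rather than the explicit $Q$-vector-space structure. The rest of your argument is also sound: direct summands of a flat module are flat (so the decomposition $\widehat{R}/R\cong \widehat{R}/\overset{\sim}R\oplus\overset{\sim}R/R$ transfers the conclusion), and over the noetherian domain $R$ a nonzero flat module is torsion-free, so any nonzero element has annihilator $(0)$ and hence $0\in\Ass$. Two cosmetic remarks: the vanishing of $\Tor_{\geq 2}$ is not needed for flatness, only $\Tor_1$; and you should make explicit that you are using the standing hypothesis of the paper that $R$ is a domain, since the torsion-freeness step fails otherwise.
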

\begin{notation}
$f_n (X): = X^{n-1}+\ldots+X$.
\end{notation}
A computation of $\widehat{R}\otimes \widehat{R}$:
\begin{proposition}\label{46}
Let $(R,\fm)$ be a local   integral domain. Let $u$ be  dimension of the $Q$-vector space of $\Ext^1_R (Q,R)$. If $R = \overset{\sim}R$, then $\overset{\sim}R \otimes _R\overset{\sim}R = R$.
Suppose $\overset{\sim}R \neq R$, then
$$\otimes_n\overset{\sim}R:=\overset{\sim}R \otimes_R \ldots\otimes_R\overset{\sim}R = \overset{\sim}R \oplus Q^{f_{n}(u+1)}.$$
\end{proposition}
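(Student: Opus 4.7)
Since the case $R=\overset{\sim}R$ gives $\overset{\sim}R \otimes_R \overset{\sim}R = R$ immediately, the substance is in the case $R\neq \overset{\sim}R$, which I would attack by induction on $n$. The main input is the fundamental Matlis sequence
$$0 \lo R \lo \overset{\sim}R \lo \Ext^1_R(Q,R) \lo 0, \qquad(\star)$$
together with the running hypothesis $\Ext^1_R(Q,R)\cong Q^u$ (this is the sequence appearing as $(\ast)$ inside the proof of Proposition \ref{43}). The case $n=1$ is immediate since $f_1(u+1)=0$.

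For the inductive step I would tensor $(\star)$ on the right with $\otimes_{n-1}\overset{\sim}R$. Flatness of $Q^u$ (coming from flatness of $Q$ and stability of flat modules under direct sums) ensures that
$$0 \lo \otimes_{n-1}\overset{\sim}R \lo \otimes_n\overset{\sim}R \lo Q^u \otimes_R \otimes_{n-1}\overset{\sim}R \lo 0$$
is again exact. Plugging the inductive decomposition $\otimes_{n-1}\overset{\sim}R\cong \overset{\sim}R\oplus Q^{f_{n-1}(u+1)}$ into the right-hand term reduces everything to computations over $Q$, using the auxiliary identity $Q\otimes_R \overset{\sim}R\cong Q^{u+1}$, which itself comes from tensoring $(\star)$ with the flat $R$-module $Q$ and splitting the resulting short exact sequence of $Q$-vector spaces.

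The critical point is the splitting of the displayed exact sequence. My plan is to show that $\otimes_{n-1}\overset{\sim}R$ is weakly cotorsion in the sense of Definition \ref{1}: Matlis' theory gives $\Ext^1_R(Q,\overset{\sim}R)=0$, and any $Q$-module is trivially cotorsion, so the inductive direct sum decomposition transports this property to $\otimes_{n-1}\overset{\sim}R$. Since the quotient $Q^u \otimes_R \otimes_{n-1}\overset{\sim}R$ is then a direct sum of copies of $Q$, applying $\Ext^1_R(-,\otimes_{n-1}\overset{\sim}R)$ yields the desired splitting.

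Finally, combining the splitting with the inductive hypothesis and collecting the $Q$-summands produces an isomorphism of the form $\otimes_n\overset{\sim}R\cong \overset{\sim}R\oplus Q^{?}$. I anticipate that the main obstacle will be the exponent bookkeeping: one must check that the exponent coming from the recursive tensor calculation matches exactly $f_n(u+1)$. This reduces to a polynomial identity for $f_n(X)=X^{n-1}+\cdots+X$, built out of the recursion implicit in the splitting together with the auxiliary computation $Q\otimes_R\overset{\sim}R\cong Q^{u+1}$; that is the only nonformal part of the argument.
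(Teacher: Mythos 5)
Your plan is structurally the same as the paper's: tensor the Matlis sequence $0\to R\to \overset{\sim}R\to Q^{u}\to 0$ with a flat module, observe that the quotient is a $Q$-vector space, split the resulting sequence using $\Ext^1_R(\oplus Q,\overset{\sim}R)=\prod\Ext^1_R(Q,\overset{\sim}R)=0$, and feed in the auxiliary computation $Q\otimes_R\overset{\sim}R\cong Q^{u+1}$. All of that is sound. The gap is exactly the step you set aside as ``only bookkeeping'': it does not close. The quotient in your tensored sequence is $Q^{u}\otimes_R(\otimes_{n-1}\overset{\sim}R)$, and since $Q\otimes_R\overset{\sim}R\cong Q^{u+1}$ one gets $Q^{u}\otimes_R\overset{\sim}R\cong (Q^{u+1})^{\oplus u}\cong Q^{u(u+1)}$, \emph{not} $Q^{u+1}$. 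Already at $n=2$ the splitting gives $\overset{\sim}R\otimes_R\overset{\sim}R\cong\overset{\sim}R\oplus Q^{u^{2}+u}$, whereas $f_2(u+1)=u+1$. Writing $\otimes_n\overset{\sim}R\cong\overset{\sim}R\oplus Q^{g_n}$, your recursion is $g_n=(u+1)g_{n-1}+u(u+1)$ with $g_1=0$, whose solution is $g_n=u\,f_n(u+1)$ rather than $f_n(u+1)$. So the exponent in the statement is off by a factor of $u$ whenever $2\le u<\infty$; the two expressions agree only when $u\le 1$ or when $u$ is an infinite cardinal (in which case $u\,f_n(u+1)=f_n(u+1)$ as cardinals).

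For what it is worth, the paper's own proof commits the identical slip: after establishing $(\ast)$, namely $\overset{\sim}R\otimes_R\overset{\sim}R\cong\overset{\sim}R\oplus(\overset{\sim}R\otimes_RQ)^{\oplus u}$, and $(+)$, namely $\overset{\sim}R\otimes_RQ\cong Q^{u+1}$, it concludes $\overset{\sim}R\otimes_R\overset{\sim}R\cong\overset{\sim}R\oplus Q^{u+1}$, silently replacing $(Q^{u+1})^{\oplus u}$ by $Q^{u+1}$; the same issue propagates to the corollary on $\widehat{R}\otimes_R\widehat{R}$. Your proposal, carried out honestly, therefore proves the corrected formula $\otimes_n\overset{\sim}R\cong\overset{\sim}R\oplus Q^{u\,f_n(u+1)}$ rather than the asserted one, so you should either record that correction or restrict to $u\le 1$ or $u$ infinite. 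One smaller point: to split off the $Q$-summands you invoke that ``any $Q$-module is trivially cotorsion''; what you actually need is $\Ext^1_R(Q,\oplus Q)=0$, which holds because a direct sum of copies of the injective module $Q$ over a noetherian domain is again injective --- this deserves to be said explicitly, since $\Ext$ does not in general commute with infinite direct sums in the second variable.
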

\begin{proof}
There exists an sequence
$$0 \longrightarrow R \longrightarrow \overset{\sim}R \longrightarrow Q^{\oplus u}\ \lo 0.$$
Since $\overset{\sim}R$ is flat as $R$-module
$$0 \longrightarrow \overset{\sim}R \longrightarrow \overset{\sim}R \otimes_R \overset{\sim}R \longrightarrow Q^u \otimes_R \overset{\sim}R \longrightarrow 0,$$is exact.
Recall that $\overset{\sim}R$ is cotorsion  and $\overset{\sim}R \otimes_R Q  $ is a $Q$-vector space.
Then 
 $$\begin{array}{ll}
\Ext^1_R ((\overset{\sim}R \otimes_R Q)^{\oplus u} , \overset{\sim}R) &= \Ext^1_R (\oplus Q , \overset{\sim}R)\\
 &=\prod   \Ext^1_R ( Q , \overset{\sim}R)\\
 &=0.\\
 \end{array}$$
 
  We apply this to see the exact sequence
$$0 \longrightarrow \overset{\sim}R \longrightarrow \overset{\sim}R \otimes_R \overset{\sim}R \longrightarrow (\overset{\sim}R \otimes _RQ)^{\oplus u} \lo 0,$$
is split, i.e.,
$$ \overset{\sim}R \otimes_R \overset{\sim}R \cong  \overset{\sim}R \oplus  (\overset{\sim}R \otimes_R Q)^{\oplus u}\quad(\ast)$$
Now we compute $(\overset{\sim}R \otimes_R Q)$.
Apply the exact functor $Q \otimes_R -$ to
$$0 \longrightarrow R \longrightarrow \overset{\sim}R \longrightarrow \oplus Q \longrightarrow 0$$and deduce the exact sequence
$$0 \longrightarrow Q \longrightarrow Q \otimes_R\overset{\sim}R \longrightarrow \oplus( Q \otimes_R Q) =\oplus Q\longrightarrow 0.$$
Since $Q$ is injective the sequence splits, i.e., $$Q \otimes_R\overset{\sim}R = \oplus_u( Q \otimes_R Q)\oplus Q=Q^{u+1}\quad(+)$$  We put this in $(\ast)$,
and deduce that $$ \overset{\sim}R \otimes_R \overset{\sim}R \cong  \overset{\sim}R \oplus ( Q)^{\oplus u+1}\quad(\ast,\ast)$$Suppose, inductively that $$\otimes_n  \overset{\sim}R = \overset{\sim}R \oplus Q^{f_{n}(u+1)}.$$Tensor this with $\overset{\sim}R$ yields that$$\begin{array}{ll}
\otimes_{n+1}  \overset{\sim}R &= (\overset{\sim}R \otimes_R \overset{\sim}R) \oplus( Q\otimes_R\overset{\sim}R)^{\oplus f_n(u+1)}\\
&\stackrel{\ast\ast}=( \overset{\sim}R \oplus ( Q)^{\oplus (u+1)}) \oplus ( Q\otimes_R\overset{\sim}R)^{\oplus f_n(u+1)}\\
&\stackrel{+}=( \overset{\sim}R \oplus ( Q)^{\oplus (u+1)}) \oplus Q^{(u+1)f_n(u+1)}\\

&=\overset{\sim}R \oplus Q^{u+1+(u+1)f_{n}(u+1)}\\
&=\overset{\sim}R \oplus Q^{f_{n+1}(u+1)},\\
\end{array}$$as claimed.
\end{proof}

\begin{corollary} 
	Suppose $R$ is a 1-dimensional local domain which is not complete with respect to $\fm$-adic topology. Let $u:=\dim_Q(\Ext^1_R(Q,R))$. Then
	
	\begin{enumerate}
		\item[i)] $\widehat{R}\otimes_R\widehat{R}=\widehat{R}\oplus Q^{u+1}$
		
		\item[ii)] $\pd_R(\widehat{R}\otimes_R\widehat{R})=1$.
			\end{enumerate}
\end{corollary}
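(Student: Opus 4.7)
The plan is to deduce this Corollary almost mechanically from Proposition~\ref{46} and Proposition~\ref{43}, once we identify $\widehat{R}$ with $\overset{\sim}R$ under the one-dimensional assumption. The pivotal observation is that in a 1-dimensional noetherian local domain every nonzero ideal $\fa$ satisfies $\sqrt{\fa}=\fm$, and hence contains a power of $\fm$. Thus the $R$-topology on $R$ (whose fundamental system of neighbourhoods of $0$ is formed by the nonzero ideals) coincides with the $\fm$-adic topology, and consequently $\widehat{R}=\overset{\sim}R$. Since by hypothesis $R\neq\widehat{R}$, we are in the non-trivial regime of Proposition~\ref{46}.

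For part (i), I would just plug in $n=2$ into Proposition~\ref{46}. Because $f_2(X)=X$, that proposition gives
\[
\widehat{R}\otimes_R\widehat{R}\;=\;\overset{\sim}R\otimes_R\overset{\sim}R\;\cong\;\overset{\sim}R\oplus Q^{f_2(u+1)}\;=\;\widehat{R}\oplus Q^{u+1},
\]
which is exactly the claim. (No extra work is needed, because the computation of $u$ as $\dim_Q\Ext^1_R(Q,R)$ matches the definition used in Proposition~\ref{46}.)

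For part (ii), I would use the fact that projective dimension of a direct sum is the supremum of the projective dimensions of the summands, so it suffices to compute $\pd_R(\widehat{R})$ and $\pd_R(Q)$ separately. By Matlis' theorem recalled in Remark~\ref{14}, $\pd_R(Q)=1$ in the one-dimensional case, so $\pd_R(Q^{u+1})=1$. On the other hand, Proposition~\ref{43} yields $\pd_R(Q)=\pd_R(\overset{\sim}R)\leq \pd_R(\widehat{R})$, and since $\widehat{R}=\overset{\sim}R$ the middle inequality becomes an equality, giving $\pd_R(\widehat{R})=1$. Combining these two contributions, $\pd_R(\widehat{R}\otimes_R\widehat{R})=\max\{1,1\}=1$, as desired.

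The only conceptually non-routine ingredient is the identification $\widehat{R}=\overset{\sim}R$; once this is in hand, everything reduces to quoting already-established results. I do not expect a serious obstacle, although one should briefly confirm that $u\ge 1$ so that the summand $Q^{u+1}$ is nonzero (this follows from the exact sequence $0\to R\to\overset{\sim}R\to\Ext^1_R(Q,R)\to 0$ of \cite[5.2]{mat2} together with the hypothesis $R\neq\widehat{R}$).
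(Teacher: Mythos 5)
Your argument is correct and is essentially the deduction the paper intends: the corollary is stated without proof immediately after Proposition \ref{46}, and your route --- identify $\widehat{R}=\overset{\sim}R$ in dimension one, specialize Proposition \ref{46} to $n=2$ (noting $f_2(X)=X$), and then take $\pd$ of the resulting direct sum --- is the natural one. For part (ii) you could cite Corollary \ref{20} directly, which already records $\pd_R(\widehat{R})=1$ under these hypotheses, but your detour through Proposition \ref{43} together with $\pd_R(Q)=1$ is equally valid.
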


\section{Extensions of adic completions}

Suppose $(R,\frak{m})$ is a 1-dimensional regular ring. It is  proved in \cite{wag} that
$$\Ext_R^1 (\widehat{R}, R) = E_R(R/\fm) \oplus Q^t,$$
where $t<\infty$ or $t$ is uncountable.
Here, we extend it to the Gorenstein case.

\begin{proposition}\label{18}
	Let $(R,\frak{m})$ be a 1-dimensional Gorenstein integral domain such that $\widehat{R}$ is reduce. then
	$$\Ext_R^1 (\widehat{R}, R) = E(R/\frak{m}) \oplus Q^t.$$
	The number $t$ can be determined. For example if $p$
	being a prime numb, then
	$$
	t	=\left\{\begin{array}{ll}
	(p^i -1)^2 -1&\mbox{if }  t<\infty \\
	uncountable	&\mbox{if }   t=\infty.
	\end{array}\right.
	$$
	
\end{proposition}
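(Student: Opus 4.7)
The plan is to use the short exact sequence $0 \to R \to \widehat{R} \to \widehat{R}/R \to 0$ to reduce $\Ext^1_R(\widehat{R}, R)$ to an Ext computation on $\widehat{R}/R$, then to show that the latter is a $Q$-vector space and to split off a copy of $E_R(R/\fm)$.

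First, I would assume $R \neq \widehat{R}$ (the opposite case is degenerate). Applying $\Hom_R(-,R)$ to the displayed sequence, and invoking Corollary \ref{nfree} for the vanishing $\Hom_R(\widehat{R}, R) = 0$ (together with $\Ext^1_R(R,R)=0$), I obtain
\[
0 \lo R \lo \Ext^1_R(\widehat{R}/R, R) \lo \Ext^1_R(\widehat{R}, R) \lo 0. \qquad (\star)
\]
Next, I would verify that $\widehat{R}/R$ is a $Q$-vector space. Torsion-freeness comes from flatness of $\widehat{R}/R$ (the Corollary after Proposition \ref{43}). For divisibility, take $0 \neq r \in \fm$; since $R/rR$ is Artinian (hence $\fm$-adically complete), one has $\widehat{R}/r\widehat{R} \cong R/rR$ canonically, and tensoring the defining sequence with $R/rR$ together with the flatness of $\widehat{R}$ forces $(\widehat{R}/R)/r(\widehat{R}/R) = 0$. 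Therefore $\widehat{R}/R \cong \bigoplus_\lambda Q$ for some cardinal $\lambda \geq 1$.

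Now I would compute $\Ext^1_R(\widehat{R}/R, R) \cong \prod_\lambda \Ext^1_R(Q, R)$. Matlis' sequence $0 \to R \to \overset{\sim}R \to \Ext^1_R(Q,R) \to 0$ identifies $\Ext^1_R(Q,R) \cong \overset{\sim}R/R$; since $R$ is 1-dimensional Gorenstein, $Q/R \cong E_R(R/\fm)$, hence $\overset{\sim}R = \End_R(E_R(R/\fm)) = \widehat{R}$, and so $\Ext^1_R(Q,R) \cong \widehat{R}/R \cong \bigoplus_\lambda Q$ is itself a $Q$-vector space. Thus $\Ext^1_R(\widehat{R}/R, R)$ is a $Q$-vector space; write it as $\bigoplus_{t'} Q$. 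Since $R$ is noetherian, $\bigoplus_{t'} Q$ is injective, so the embedding $R \hookrightarrow \bigoplus_{t'} Q$ of $(\star)$ extends along the essential extension $R \subset E_R(R) = Q$ to $Q \hookrightarrow \bigoplus_{t'} Q$. This splits off, yielding $\bigoplus_{t'} Q \cong Q \oplus M'$ with $R$ lying in the first summand. Passing to the quotient and using $Q/R \cong E_R(R/\fm)$ gives
\[
\Ext^1_R(\widehat{R}, R) \cong E_R(R/\fm) \oplus M', \qquad M' \cong \bigoplus_t Q,
\]
with $t + 1 = t'$.

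It remains to pin down $t$. In our setting $\lambda = \dim_Q(\widehat{R}/R)$ governs everything: when $\lambda$ is finite, products collapse to direct sums, $t' = \lambda^2$, and hence $t = \lambda^2 - 1$; the listed value $(p^i-1)^2 - 1$ corresponds to $\lambda = p^i - 1$, as occurs in explicit finite-residue-field examples (e.g., specific hypersurface or numerical-semigroup rings) where $\dim_Q(\widehat{R}/R)$ can be counted directly. When $\lambda$ is infinite, straightforward cardinality bookkeeping on $\prod_\lambda \bigoplus_\lambda Q$ shows $t$ is uncountable. The main obstacle is precisely this last step: the structural content above is routine given the tools of the earlier sections, but isolating the integer $t$ amounts to computing $\dim_Q(\widehat{R}/R)$ in the given example, which is where the ``$(p^i-1)^2-1$'' branch encodes combinatorial input beyond the homological machinery.
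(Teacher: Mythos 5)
Your plan is correct and follows essentially the same route as the paper: both rest on Matlis' identification $\widehat{R}=\overset{\sim}R$ with $\widehat{R}/R\cong\Ext_R^1(Q,R)\cong\bigoplus_\lambda Q$, the exact sequence $0\to R\to\Ext_R^1(\widehat{R}/R,R)\to\Ext_R^1(\widehat{R},R)\to 0$ obtained from $\Hom_R(\widehat{R},R)=0$, the computation $\Ext_R^1(\bigoplus_\lambda Q,R)=\prod_\lambda\Ext_R^1(Q,R)$, and the same dimension count $t=\lambda^2-1$ (resp.\ uncountable), with Jensen's theorem pinning down the admissible finite $\lambda=p^i-1$. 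The only (harmless) divergence is how the single $E(R/\fm)$ summand is isolated: you split the essential extension $R\subset Q$ off the injective middle term and use $Q/R\cong E(R/\fm)$, whereas the paper deduces injectivity of $\Ext_R^1(\widehat{R},R)$ from $\id_R(R)=1$, applies Matlis' decomposition, and cites \cite{wag} for the multiplicity $\alpha=1$ — your variant is slightly more self-contained at that step.
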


\begin{proof}
	Matlis \footnote{	Also, he \cite{q} constructs $Q$-rings : $\Ext_R^1 (Q,R) \simeq Q$ (*), even if $R$ is $1$-dimensional and Gorenstein.}  proved the following sequence
	$$0 \longrightarrow R \longrightarrow \overset{\sim}{R} \longrightarrow \Ext_R^1 (Q,R) \longrightarrow 0,$$
	is exact.
	It is easy to see $\Ext_R^1 (Q,R)$ is a $Q$-vector space, then there exists $t \geq 0$ such that $\Ext_R^1 (Q,R) \simeq Q^t$.
	Jensen \cite{j} proved in theorem 1 that any infinite cardinal can happen in (*) among finite $ t $ the $p^i -1$ when $p$ is prime and $i > 0$ allowed. 	Since $\dim(R)=1$ the $\frak{m}$-adic topology is the same as $R$-topology, i.e.
	$$\vpl_{n\in \mathbb{N}} R/\frak{m}^{n} := \widehat{R} = \overset{\sim}{R} := \vpl_{r\in R} R/rR.$$In sum, there exists an exact sequence
	$$0 \longrightarrow R \longrightarrow \widehat{R} \longrightarrow \oplus Q \longrightarrow 0.$$
	Applying $\Hom_R (-,R)$ to it:
	$$\Hom_R (\widehat{R},R) \longrightarrow \Hom_R (R,R) \longrightarrow \Ext_R^1 (\oplus Q,R) \longrightarrow \Ext_R^1 (\widehat{R} ,R) \longrightarrow \Ext_R^1 (R,R)=0.$$
	Also, $\Ext_R^1(\oplus Q,R) = \sqcap \Ext_R^1(\oplus Q,R)  = \sqcap (\oplus Q)$
	Now there is a vector space over $Q$ of dimension
	$t_1:=t^2$ if $t$ is finite and $t_1:=2^t$ if $t$ is infinite. According to Matlis (see Fact \ref{hom}) $\Hom_R (\widehat{R},R)=0$.
	So
	$$ 0 \longrightarrow R \longrightarrow \oplus Q \longrightarrow \Ext_R^1 (\widehat{R},R) \longrightarrow 0.$$
	Since $R$ is $1$-dimensional Gorenstein, we know $\id_R(R)=1$ and also $Q$ is injective.
	From these, we conclude $\Ext_R^1 (\widehat{R},R)$ is injective. By Matlis decomposition,
	$$\Ext_R^1 (R,R) = E(R/\fm)^\alpha \oplus E(R)^\beta,$$
	where $\alpha,\beta$ are some cardinals.
	By the proof of page 172 of \cite{wag} $\alpha=1$.
	\footnote{Let us show the possibility $\beta=0$. If $R$ is $Q$-ring, then $t=1$ and $t_{1}=1$.} Tensoring with flat module $Q$ yields that
	$$0 \longrightarrow Q \longrightarrow \oplus Q \longrightarrow \Ext_R^1 ( \widehat{R},R) \otimes Q \longrightarrow 0.$$
	Also,
	$$\begin{array}{ll}
	\Ext_R^1(\widehat{R},R) \otimes Q &=(Q \otimes E_R(R/\frak{m})) \oplus ( Q \otimes (\oplus Q)^\beta )\\
	&= 0 \oplus ( \oplus Q^\beta )\\
	&=Q^\beta.\\
	\end{array}$$
	Put this in the above sequence and compute the dimension, lead us to
	$$0 \longrightarrow Q \longrightarrow\underset{t_1}\oplus Q \longrightarrow \underset{\beta}\oplus Q \longrightarrow 0.$$
	Now, $\beta =t_1 -1 $
	and then $\beta = ((p^i -1)-1)^2$ if $t < \infty $, and $2^t$ if $t=\infty$. But $2^t$ is uncountable.
\end{proof}

Recall from \cite{wag} that $\Ext^{i}_{R}(\widehat{R},R)=0 , \forall i\geq2$, where $R$ is a 1-dimensional regular ring which is not complete.

\begin{corollary}\label{20}
	Let $(R,\frak{m})$ be a 1-dimensional integral domain which is not complete. Then $\pd_{R} (\widehat{R})=1$.
	In particular, $\Ext_{R}^{i}(\widehat{R},R)=0$   for all $i\geq2$.
\end{corollary}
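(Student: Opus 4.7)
The plan is to reduce the claim to the machinery already assembled in Section 7, once we identify $\widehat R$ with Matlis' $R$-adic completion $\overset{\sim}R$ in the one-dimensional setting. First, I would observe that when $\dim R=1$, the $\fm$-adic topology and the $R$-topology on $R$ coincide: every nonzero element of $\fm$ is a parameter, so $\fm^n$ and $rR$ are cofinal families of ideals (as was already noted in the proof of Proposition \ref{18}, where the inverse-limit identification $\widehat R=\overset{\sim}R$ is spelled out). Consequently $\widehat R=\overset{\sim}R$.

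Next, since $R$ is assumed not to be complete, we have $R\neq \widehat R=\overset{\sim}R$, so the hypothesis $R\neq \overset{\sim}R$ of Proposition \ref{43} is in force. Matlis' theorem, recalled in Remark \ref{14}, gives $\pd_R(Q)=1$ in dimension one; combining this with Proposition \ref{43} yields the chain
\[
1=\pd_R(Q)=\pd_R(\overset{\sim}R)\leq \pd_R(\widehat R).
\]
Since $\widehat R=\overset{\sim}R$, the outer terms collapse and force $\pd_R(\widehat R)=\pd_R(\overset{\sim}R)=1$, which is the desired equality.

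The ``in particular'' clause is then immediate: a module of projective dimension $1$ has vanishing $\Ext^i(-,N)$ for all $i\geq 2$ and every $N$, so in particular $\Ext^i_R(\widehat R,R)=0$ for $i\geq 2$.

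There is essentially no technical obstacle here; the substance of the argument is contained in Proposition \ref{43} and in Matlis' formula $\pd_R(Q)=1$ for one-dimensional domains. The only point that requires a (trivial) check is the coincidence of the two completions in dimension one, which legitimizes transporting the statements about $\overset{\sim}R$ to statements about $\widehat R$. If one wanted to avoid invoking Proposition \ref{43} (and hence Fact \ref{hom}), an alternative would be to apply $\Hom_R(-,R)$ to Matlis' sequence $0\to R\to \overset{\sim}R\to \Ext^1_R(Q,R)\to 0$ (with $\Ext^1_R(Q,R)\cong \oplus Q$ a $Q$-vector space, hence injective since $\dim R=1$ makes $Q$ injective) and use $\pd_R(Q)=1$ to kill $\Ext^{\geq 2}_R(\widehat R,-)$ directly; but the cleaner route is the one above.
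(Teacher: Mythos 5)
Your main argument is correct but routes through different machinery than the paper does. The paper's proof does not invoke Proposition \ref{43} at all: it identifies $\widehat R=\overset{\sim}R$ (as you do), takes Matlis' exact sequence $0\to R\to\widehat R\to\Ext^1_R(Q,R)\to 0$ with $\Ext^1_R(Q,R)\cong\oplus Q$, applies $\Hom_R(-,R)$, and kills $\Ext^i_R(\widehat R,R)\cong\prod\Ext^i_R(Q,R)$ for $i\geq 2$ using $\pd_R(Q)=1$ --- i.e.\ exactly the ``alternative'' you sketch in your last paragraph. Your primary route, deducing $\pd_R(\widehat R)=\pd_R(\overset{\sim}R)=\pd_R(Q)=1$ from Proposition \ref{43} once the hypothesis $R\neq\overset{\sim}R$ is verified, is a legitimate and arguably cleaner derivation: it handles the equality $\pd_R(\widehat R)=1$ head-on (including the lower bound, which Proposition \ref{43} extracts from the non-freeness of $\overset{\sim}R$ via Fact \ref{hom}), whereas the paper's computation as written only directly establishes the vanishing of $\Ext^{\geq 2}_R(\widehat R,R)$ and leaves the reader to note that the same long exact sequence with an arbitrary coefficient module, together with Corollary \ref{nfree}, pins down the projective dimension. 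The cost of your route is that it imports the heavier Proposition \ref{43} (and hence Fact \ref{hom} and the splitting $\widehat R/R\cong\widehat R/\overset{\sim}R\oplus\overset{\sim}R/R$) where a two-line diagram chase suffices; the benefit is that the statement $\pd_R(\widehat R)=1$ is actually proved rather than implied.
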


\begin{proof}
	Since $\dim(R)=1$, we know $\widehat{R}=\overset{\sim}R$. Also, there is an exact sequence
	$$\eta: 0 \longrightarrow R \longrightarrow \widehat{R} \longrightarrow \Ext^{1}_R(Q,R) \longrightarrow 0$$
	and the natural isomorphism $\Ext^{1}_R(Q,R) \cong \oplus Q$.
	Let $i \geq 2$, and apply $\Hom_
	R (-,R)$ to $\eta$ yields that
	$$0=\Ext _{R}^{i-1} (R,R) \longrightarrow \Ext _{R}^{i} (\oplus Q, R) \longrightarrow \Ext _{R}^{i} (\widehat{R},R) \longrightarrow \Ext _{R}^{i} (R,R) =0. $$
Since  $\pd(Q)=1$, and $i \geq 2$  we have $\Ext_R^i(Q,R)=0$. By plugging this in the previous sequence yields that  $$\Ext_{R}^{i}(\widehat{R},R)\cong\Ext _{R}^{i} (\oplus Q, R)\cong\prod\Ext_R^i(Q,R)=0,$$
as claimed.\end{proof}
Against to projective dimension, 
it is easy to find $\id_R (\widehat{R})$:

\begin{corollary}\label{22}  Let $(R,\frak{m})$  be  a local ring which is not a field. The following are equivalent:
	\begin{itemize}
		\item[(i)] $\id_R (\widehat{R}) < \infty$.
		\item[(ii)] $R$ is Gorenstein.
	\end{itemize}
\end{corollary}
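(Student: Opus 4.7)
The plan is to reduce both directions to the flat base change identity
$$\Ext^i_R(M,\widehat{R})\cong\Ext^i_R(M,R)\otimes_R\widehat{R}\quad(\heartsuit)$$
valid for any finitely generated $R$-module $M$, since $R$ is noetherian and $\widehat{R}$ is flat over $R$. Each direction will then follow by applying $(\heartsuit)$ to a suitable family of test modules and invoking a standard criterion for finite injective dimension.

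For $(i)\Rightarrow(ii)$, I would specialize $(\heartsuit)$ to $M=k:=R/\fm$. If $\id_R(\widehat{R})=n<\infty$, then for all $i>n$
$$0=\Ext^i_R(k,\widehat{R})\cong \Ext^i_R(k,R)\otimes_R\widehat{R}.$$
The module $\Ext^i_R(k,R)$ is a finite-dimensional $k$-vector space (its dimension is the Bass number $\mu_i(\fm,R)$), and because $k\otimes_R\widehat{R}=\widehat{R}/\fm\widehat{R}=k$, tensoring with $\widehat{R}$ preserves its $k$-dimension. Hence $\Ext^i_R(k,R)=0$ for $i>n$, and Bass's characterization $\id_R(R)=\sup\{i:\Ext^i_R(k,R)\neq 0\}$ gives $\id_R(R)\le n<\infty$. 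By definition this means $R$ is Gorenstein.

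For $(ii)\Rightarrow(i)$, set $d:=\dim R$, so that $\id_R(R)=d$. By the noetherian version of Baer's criterion it is enough to verify $\Ext^{d+1}_R(R/I,\widehat{R})=0$ for every ideal $I\subseteq R$. Since $R/I$ is finitely generated, $(\heartsuit)$ yields
$$\Ext^{d+1}_R(R/I,\widehat{R})\cong \Ext^{d+1}_R(R/I,R)\otimes_R\widehat{R}=0,$$
where the vanishing on the right uses $\id_R(R)=d$. Therefore $\id_R(\widehat{R})\le d<\infty$.

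The argument rests on three ingredients: the base-change isomorphism $(\heartsuit)$, Bass's formula for injective dimension via Bass numbers, and Baer's criterion; none of these poses a genuine obstacle in the noetherian local setting. The only mild subtlety worth flagging is ensuring that a nonzero finite-dimensional $k$-vector space stays nonzero after tensoring with $\widehat{R}$ over $R$, which is immediate from $k\otimes_R\widehat{R}=k$; this is what drives the descent in the forward direction.
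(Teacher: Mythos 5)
Your proof is correct, and it takes a different route from the paper's in both directions, though the difference is most pronounced in $(ii)\Rightarrow(i)$. For $(i)\Rightarrow(ii)$ the paper changes rings: it uses the Cartan--Eilenberg isomorphism $\Ext^i_R(k,\widehat{R})\cong\Ext^i_{\widehat{R}}(\widehat{R}/\fm\widehat{R},\widehat{R})$ to conclude that $\widehat{R}$ is Gorenstein and then descends to $R$; you instead stay over $R$ and use flat base change in the second variable together with $k\otimes_R\widehat{R}\cong k$ to read off the Bass numbers of $R$ directly. These are essentially two bookkeepings of the same computation $\Ext^i_R(k,\widehat{R})\cong k^{\mu_i(\fm,R)}$. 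For $(ii)\Rightarrow(i)$ the divergence is real: the paper observes that for Gorenstein $R$ the \v{C}ech complex is a flat resolution of $H^d_\fm(R)\cong E_R(k)$ and Matlis-dualizes it to produce an explicit finite injective resolution of $E_R(k)^\vee=\widehat{R}$ whose terms are duals of localizations; you instead combine Baer's criterion for injective dimension with the base-change isomorphism $(\heartsuit)$ applied to the cyclic modules $R/I$. Your argument is more uniform (the same identity $(\heartsuit)$ drives both implications) and immediately gives the sharp bound $\id_R(\widehat{R})\le\dim R$, while the paper's construction has the virtue of exhibiting the injective resolution of $\widehat{R}$ concretely. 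All the ingredients you invoke --- flat base change of $\Ext$ for finitely generated modules over a noetherian ring, Bass's formula, and the $\Ext^{n+1}(R/I,-)$ form of Baer's criterion --- are standard and apply here, so there is no gap.
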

\begin{proof}
	Suppose $t:=\id_R  (\widehat{R}) < \infty$. Then $\Ext^i_{R} (- , \widehat{R})=0$ for every $i >t$.  According to  Cartan-Eillenberg \cite[VI.4.1.3]{ce} we know
	$$\begin{array}{ll}
0	&=\Ext_{R}^i (R/\fm , \widehat{R})\\
	&=\Ext_{\widehat{R}}^i (R/\fm \otimes \widehat{R}, \widehat{R})\\
	&=\Ext_{\widehat{R}}^i ( \widehat{R}/ \fm \widehat{R} , \widehat{R})\\
	&=\Ext_{\widehat{R}}^i ( \widehat{R}/ \fm_{ \widehat{R}} , \widehat{R}).
	\end{array}$$
 In other words, $\widehat{R}$ is Gorenstein and   $R$ is as well.
	Conversely, suppose $R$ is Gorenstein. Then
	$$ 0 \longrightarrow R \longrightarrow \oplus R_{x_{i}} \longrightarrow \ldots  \longrightarrow R_{x_{1}\ldots x_{d}}\longrightarrow 0$$
	is a flat resolution of $H^{d}_{\frak{m}} (R)$.
	Dualizing by $\Hom_{R}(-, E(R/\frak{m}))$ shows that
	$$ 0 \longrightarrow E(R/\frak{m})^\vee \longrightarrow R_{x_{1}\cdots x_{d}}^\vee \longrightarrow \ldots \longrightarrow R^\vee\longrightarrow 0$$
	is exact. Thus, $\id_{R}(\widehat{R}) < \infty$.
\end{proof}

\section{Matlis' decomposability problem} 

Let $(R,\fm)$ be a complete local domain of dimension one, and let $S$ be torsion-free and of rank one. It was a conjecture of Matlis that $Q/R\otimes_R S$ is indecomposable. 
In \cite{d} he was successful to show that it is indeed indecomposable.

\begin{discussion}\label{dis}Adopt the above notation. Then $H^1_\fm(S)\cong Q/S.$
\end{discussion}

\begin{proof}Apply $- \otimes_R S$ to  $ 0 \longrightarrow R \longrightarrow Q \longrightarrow Q/R \longrightarrow 0$
	gives us the following diagram:
	
	$$
	\begin{CD}
0=\Tor^R_1(Q, S)@>>>	\Tor^R_1(Q/R, S)@>>>   R\otimes_R S @>>> Q\otimes_R S@>>> Q/R\otimes_R S@>>>0\\
	@. @AAA\cong @AAA = @AAA   f @AAA \\
@.	0@>>> S @>>> Q @>>>Q/S@>>>0 \\
	\end{CD}$$On the one hand $\Tor^R_1(Q/R, S)\subseteq  R\otimes_R S =S$ is torsion-free. On the other hand any higher tor is torsion. So, $\Tor^R_1(Q/R, S)=0$. By 5-lemma, $$H^1_\fm(S)=H^1_\fm(R)\otimes_R S\cong Q/R \otimes_R S\stackrel{f}\cong Q/S,$$as claimed.
\end{proof}

 Let us extend Matlis' result to higher rank, via a modern proof.

\begin{theorem}
Let $(R,\fm)$ be a complete local domain of dimension one, and let $S$ be indecomposable torsion-free and of finite rank. Then 
$Q/R\otimes_R S$ is indecomposable provided it is nonzero.
\end{theorem}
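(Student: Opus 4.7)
The plan is to reduce the problem to the finitely generated case and then combine local duality with the canonical-module duality $\Hom_R(-, \omega_R)$ on maximal Cohen-Macaulay modules and Matlis duality.

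First, since $R$ is a complete local domain with $\dim R = 1$, it is automatically Cohen-Macaulay and admits a canonical module $\omega_R$. By the corollary following Proposition~\ref{mtq} (ultimately \cite[Cor.~3]{matcanada}), every torsion-free $R$-module of finite rank splits as $Q^t \oplus M$ with $M$ finitely generated. Indecomposability of $S$ then forces either $S \cong Q$ or $S$ finitely generated torsion-free. The first possibility is excluded since $Q/R \otimes_R Q = 0$ (apply $-\otimes_R Q$ to $0 \to R \to Q \to Q/R \to 0$ and note $Q \otimes_R Q = Q$), contradicting the hypothesis $Q/R \otimes_R S \neq 0$. So I may assume $S$ is finitely generated, torsion-free, and indecomposable.

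Next, I would repeat the computation of Discussion~\ref{dis} for this $S$: tensoring $0 \to R \to Q \to Q/R \to 0$ with $S$ and using that $S \hookrightarrow Q \otimes_R S$ (as $S$ is torsion-free) gives $\Tor_1^R(Q/R, S) = 0$, whence
$$Q/R \otimes_R S \;\cong\; H^1_\fm(R) \otimes_R S \;\cong\; H^1_\fm(S).$$
So the task becomes to show $H^1_\fm(S)$ is indecomposable. Local duality over the complete 1-dimensional Cohen-Macaulay local ring $R$ yields $H^1_\fm(S)^\vee \cong \Hom_R(S, \omega_R)$. Over a Cohen-Macaulay local ring with canonical module, $\Hom_R(-, \omega_R)$ is an involutive duality on the category of maximal Cohen-Macaulay modules, and in dimension one these coincide with the finitely generated torsion-free modules; in particular this duality preserves indecomposability, so $\Hom_R(S, \omega_R)$ is indecomposable.

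Finally, since $S$ is finitely generated, $H^1_\fm(S)$ is Artinian, hence Matlis reflexive. Matlis duality interchanges indecomposable finitely generated and indecomposable Artinian modules (a splitting of one would, via $(-)^\vee$, produce a splitting of the other), so the indecomposability of $H^1_\fm(S)^\vee \cong \Hom_R(S, \omega_R)$ transfers to $H^1_\fm(S) \cong Q/R \otimes_R S$, as required. The main obstacle I anticipate is not the duality chain itself, which is formal once one is in the finitely generated case, but the clean disposal of the divisible summand $Q^t$ arising from the structure theorem; the hypothesis $Q/R \otimes_R S \neq 0$ together with the indecomposability of $S$ makes this reduction automatic, but it must be explicitly argued since otherwise the local-duality identification of $H^1_\fm(S)^\vee$ with $\Hom_R(S,\omega_R)$ would not be directly available.
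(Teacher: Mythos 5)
Your proposal is correct and follows essentially the same route as the paper: reduce via \cite[Cor.~3]{matcanada} and indecomposability to the case of a finitely generated torsion-free (hence maximal Cohen-Macaulay) module, identify $Q/R\otimes_R S$ with $H^1_\fm(S)$, and then combine Grothendieck local duality, the canonical-module duality $\Hom_R(-,\omega_R)$, and Matlis duality to transfer indecomposability. The only difference is that you spell out the identification $Q/R\otimes_R S\cong H^1_\fm(S)$ and the exclusion of the case $S\cong Q$ slightly more explicitly than the paper does, which is harmless.
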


\begin{proof}
	In view of \cite[Cor 3]{matcanada}, there is $t$ and a finitely generated module $M$ such that $S=Q^t\oplus M$. Since $S$ is indecomposable, either $S=Q$ or $S=M$.
	In the first case, $$0=H^1_\fm(S)
	=H^1_\fm(R)\otimes_RS\cong Q/R\otimes_R S.$$ So, we are done. In the second case, we recall that a finitely generated module is indecomposable if its Matlis dual is as well.  Now, we use Grothendieck's local duality to see $H^1_\fm(M)^\vee=\Hom_R(M,\omega_R)$. As the ring is 1-dimensional, and $M$ is torsion-free, it is maximal Cohen-Macaulay. Now, we use \cite[3.3.10(d)]{BH} to see
	$$M\cong \Hom_R(\Hom_R(M,\omega_R),\omega_R),$$ to get the desired claim.
\end{proof}
\begin{remark}We leave to the reader to use some Mayer-Vietoris and present converse of above result.\end{remark}


\end{document}